
\documentclass[12pt,a4paper]{amsart}
\usepackage[utf8]{inputenc}
\usepackage{amsmath,amscd, amsfonts, amssymb, amsthm}
\usepackage{parskip}
\usepackage{fullpage}
\usepackage[colorlinks, breaklinks, allcolors=blue]{hyperref} 
\usepackage{breakurl}
\usepackage[all]{xypic}
\usepackage{ctable}
\usepackage{longtable}
\usepackage{array}
\usepackage{booktabs}
\usepackage{float}
\floatstyle{plaintop}
\restylefloat{table}



\newcommand\CB{{\mathcal B}}

\newcommand\CG{{\mathcal G}}
\newcommand\CGtilde{{\widetilde\CG}}
\newcommand\CH{{\mathcal H}} 
 
\newcommand\CL{{\mathcal L}}

\newcommand\CO{{\mathcal O}}

\newcommand\CT{{\mathcal T}} 
\newcommand\CU{{\mathcal U}} 
\newcommand\CV{{\mathcal V}}
\newcommand\CX{{\mathcal X}}
\newcommand\CY{{\mathcal Y}}

\newcommand\fbar{{\overline f}}

\newcommand\Htilde{{\widetilde H}}
\newcommand\Ltilde{{\widetilde L}}

\newcommand\Qtilde{{\widetilde Q}}

\newcommand\BBA{{\mathbb A}}
\newcommand\BBC{{\mathbb C}}
\newcommand\BBF{{\mathbb F}}
\newcommand\BBG{{\mathbb G}}
\newcommand\BBN{{\mathbb N}}
\newcommand\BBP{{\mathbb P}}
\newcommand\BBQ{{\mathbb Q}}
\newcommand\BBR{{\mathbb R}}
\newcommand\BBZ{{\mathbb Z}}

\newcommand\id{{\mathrm {id}}}
\newcommand\gen{{\mathrm {gen}}}
\newcommand\soc{{\mathrm {soc}}}

\renewcommand\mod{{/\!\!/}}

\newcommand\A{{\operatorname A}}
\newcommand\B{{\operatorname B}}
\newcommand\C{{\operatorname C}}
\newcommand\D{{\operatorname D}}
\newcommand\E{{\operatorname E}}
\newcommand\F{{\operatorname F}}
\newcommand\G{{\operatorname G}}
\newcommand\Gm{{{\BBG}_m}}


\newcommand\Char{\operatorname{char}}

\newcommand\GL{\operatorname{GL}}

\renewcommand\ker{{\operatorname{ker}}}

\newcommand\SL{\operatorname{SL}}
\newcommand\Sp{\operatorname{Sp}}
\newcommand\Spin{\operatorname{Spin}}
\newcommand\SO{\operatorname{SO}}
\renewcommand\O{\operatorname{O}}
\newcommand\Spec{\operatorname{Spec}}

\newcommand\inverse{^{-1}}


\newcommand\rk{\operatorname{rk}}
\newcommand\Ker{\operatorname{ker}}


\numberwithin{equation}{section}

\theoremstyle{plain}
\newtheorem{lemma}[equation]{Lemma}
\newtheorem{theorem}[equation]{Theorem}
\newtheorem*{Theorem}{Theorem}

\newtheorem{corollary}[equation]{Corollary}
\newtheorem{proposition}[equation]{Proposition}
\theoremstyle{definition}

\newtheorem{remark}[equation]{Remark}


\subjclass[2010]{Primary 20G15, 14M27, 14M17; Secondary 14L30, 20G05}

\begin{document}

\title[Spherical subgroups in simple algebraic groups]
{Spherical subgroups in simple algebraic groups}

\dedicatory{In memory of Tonny Springer}

\author[F. Knop]{Friedrich Knop}
\email{friedrich.knop@fau.de}
\address
{Department Mathematik,
Emmy-Noether-Zentrum,
FAU Erlangen-Nürnberg,
Cauerstrasse 11,
91058 Erlangen, Germany}

\author[G. Röhrle]{Gerhard Röhrle}
\email{gerhard.roehrle@rub.de}
\address
{Fakultät für Mathematik,
Ruhr-Universität Bochum,
D-44780 Bochum, Germany}

\keywords{Spherical subgroup, symmetric subgroup, spherical homogeneous variety}

\allowdisplaybreaks

\begin{abstract}
  Let $G$ be a simple algebraic group. A closed subgroup $H$ of $G$ is
  called spherical provided it has a dense orbit on the flag variety
  $G/B$ of $G$.  Reductive spherical subgroups of simple Lie groups were
  classified by Krämer in 1979.  In 1997, Brundan showed that each
  example from Krämer's list also gives rise to a spherical subgroup
  in the corresponding simple algebraic group in any positive
  characteristic.  Nevertheless, there is no classification of all
  such instances in positive characteristic to date.  The goal of this
  paper is to complete this classification.  It turns out that 
  there is only one additional instance (up to isogeny) in characteristic $2$ 
  which has no counterpart in Krämer's classification.

As one of our key tools, 
we prove a general deformation result for subgroup schemes allowing us to 
deduce the sphericality of subgroups in positive characteristic 
from this property for subgroups in characteristic 0. 
\end{abstract}

\maketitle


\section{Introduction}

Let $G$ be a simple algebraic group defined over an algebraically
closed field $k$ of characteristic $p\ge0$.  A closed subgroup $H$ of
$G$ is called \emph{spherical} provided it has a dense orbit on the
flag variety $G/B$ of $G$.  Alternatively, $B$ acts on $G/H$ with an
open dense orbit.  Accordingly, a $G$-variety with this property is
also called \emph{spherical}.

The purpose of this paper is to classify connected reductive spherical
subgroups of simple groups in arbitrary characteristic. Thereby, we
generalize Krämer's classification \cite{Kr} in characteristic zero.

The class of reductive spherical subgroups is of particular
importance. This is shown by the fact that Krämer's list permeates
much of the theory of spherical varieties in characteristic zero. In
particular, these kind of subgroups provide many of the building
blocks for arbitrary spherical subgroups (see e.g.\ 
\cite{BrPezz}). We expect reductive spherical subgroups to play a
similar role for arbitrary $p$. In fact, the results of the present
paper were already used in \cite{Knop5} to list all spherical
subgroups of rank $1$, which is crucial for the theory of general
spherical varieties.

For $p\ne2$, the class of reductive spherical subgroups includes all
symmetric subgroups, i.e., subgroups which are fixed points of an
involutory automorphism of $G$ (see e.g.\ Springer \cite{springer}). On
the other side, for $p=2$ symmetric subgroups are not well behaved at
all. Thus, reductive spherical subgroups seem to be the correct 
replacement.

Note that the requirement of having an open orbit in $G/B$ entails
that $H$ has in fact only finitely many orbits (see
e.g.\ \cite{Knop3}). Therefore, our classification theorem can also be
viewed as a contribution
to the program by Seitz \cite{Seitz} to classify all pairs of subgroups 
$X,Y$ of a reductive group $G$ such that there are only finitely many
$(X,Y)$-double cosets in $G$, see also \cite{Br} and
\cite{Duck}.

The most important previous work is the aforementioned paper
\cite{Kr} by Krämer. Not only do we use his list as a guideline,
more importantly, it enters our computations crucially even in positive
characteristic. 
This is because we employ extensively the technique
of reduction mod $p$ which has been first used by Brundan \cite{Br} in
this context in 1997. 
There he shows that all items from Krämer's list descend to
arbitrary positive characteristic. To this end, Brundan proves,
\cite[Thm. 4.3]{Br}, that if $H$ and $G$ are defined over $\BBZ$ and
some additional 
technical condition holds, then $H$ is spherical for any $p>0$ if
and only if $H$ is spherical for $p=0$. In the present paper, we remove
the technical condition in Theorem \ref{Theorem1}, making the 
reduction mod $p$ technique
much more flexible to use. In particular, we barely ever have to check
sphericality of a given subgroup. Instead we just have to look it up in
Krämer's list.

Note that for the
purpose of classifying spherical subgroups, we may replace $G$ with
an isogenous group (using Lemma~\ref{lem:isogeny}). Therefore, 
the simply connected Spin groups do not make an appearance
in Table~\ref{table:sp-classical}, 
for instance, but rather their isogenous counterparts do.

We now describe our results in detail. The only surprise is that, up to isogeny,
there is only one case, namely in characteristic $2$, which is really
genuine to positive characteristic, i.e., which has no counterpart in
Krämer's classification.

\begin{Theorem}
  Let $G$ be a simple algebraic group and let $H \subset G$ be a
  closed connected reductive subgroup of $G$.  Then $H$ is spherical
  in $G$ if and only if $H$ is one of the groups in
  Table~\ref{table:sp-classical} ($G$ classical) or
  Table~\ref{table:sp-exceptional} ($G$ exceptional).
\end{Theorem}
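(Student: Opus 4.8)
The plan is to reduce the classification to Krämer's list \cite{Kr} via the reduction mod $p$ technique, now in its sharpened form (Theorem \ref{Theorem1}), and then to handle the finitely many residual cases by direct computation. First I would set up the $\BBZ$-forms: given a pair $(G,H)$ with $G$ simple and $H$ connected reductive, one wants to realise $G$ and $H$ over $\BBZ$ (or a suitable Dedekind ring) so that the generic fibre lands in characteristic $0$ and the special fibre at $p$ recovers the given situation. Using Lemma~\ref{lem:isogeny} we are free to replace $G$ by any isogenous group, which is what lets us avoid the simply connected Spin groups and instead work with the forms appearing in Table~\ref{table:sp-classical}. The key point is that Theorem \ref{Theorem1}, having dispensed with Brundan's technical hypothesis, gives: $H$ is spherical in characteristic $p$ if and only if the characteristic-$0$ fibre is spherical — hence if and only if $(G_{\BBC},H_{\BBC})$ appears on Krämer's list. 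So for every pair that \emph{does} deform to characteristic $0$, sphericality is settled by looking it up in \cite{Kr}.

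The substance of the argument is therefore organised around a case analysis by the type of $G$ (classical in Table~\ref{table:sp-classical}, exceptional in Table~\ref{table:sp-exceptional}) and, within each, by the possible connected reductive subgroups $H$. For each candidate $H$ one must decide whether a $\BBZ$-deformation to a characteristic-$0$ subgroup exists. When it does, Theorem \ref{Theorem1} plus Krämer finishes the case immediately. When it does \emph{not} — which can only happen in small characteristic, essentially $p=2$ and $p=3$, where exceptional isogenies, non-reduced stabilizers, and the collapse of the symmetric-subgroup theory intervene — one must argue directly: either exhibit a Borel orbit that is dense (typically by a dimension count $\dim B + \dim H \ge \dim G$ together with finiteness of the generic isotropy, or by producing an explicit point with the right tangent-space condition), or show that no dense orbit exists (by bounding the complexity, e.g.\ via invariant-theoretic estimates on $k[G/H]^{U}$ or by an explicit orbit enumeration). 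The genuinely new phenomenon — the single extra case in characteristic $2$ with no counterpart in Krämer's list — is isolated exactly here: it is a pair that cannot be lifted to characteristic $0$ yet still turns out to be spherical by a direct check.

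I expect the main obstacle to be the characteristic-$2$ analysis for the classical groups, where the orthogonal and symplectic groups are intertwined by the exceptional isogeny $\B_n \sim \C_n$ and where many would-be symmetric pairs degenerate. Here the clean dichotomy "lifts to char $0$ $\Rightarrow$ use Krämer; otherwise compute" requires care, because one must be sure one has enumerated \emph{all} connected reductive subgroups up to conjugacy — Seitz-type maximal-subgroup data and the subgroup structure of the classical groups in characteristic $2$ are needed — and then, for each non-liftable candidate, carry out an honest sphericality computation rather than a table lookup. The exceptional groups are more rigid and the list of reductive spherical subgroups is short, so there the deformation argument should cover almost everything and only a handful of small-characteristic pairs (again mostly $p=2$, some $p=3$) need hands-on verification; these can be treated by explicit root-subgroup computations showing the relevant Borel orbit is or is not dense.
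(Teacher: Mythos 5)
Your top-level strategy --- deform to characteristic $0$ via Theorem \ref{Theorem1}, look the pair up in Kr\"amer's list, and treat the non-liftable residue by hand --- is indeed the first half of the paper's argument. But you have correctly identified, and then left unresolved, the actual crux: how to enumerate all connected reductive subgroups $H$ in a way that makes the case analysis terminate. The paper's answer is the dichotomy between $G$-completely reducible and non-$G$-cr subgroups, and this is the structural idea missing from your proposal. For $G$-cr subgroups of classical groups, $H$ either acts irreducibly on the natural module (handled by Steinberg's tensor product theorem plus the dimension bound \eqref{eq:2} against $|W_H\cdot\omega|$ and L\"ubeck's tables, Lemma \ref{lemma7}), or it preserves a subspace and the $G$-cr hypothesis forces it into a Levi subgroup or a stabilizer $\SO(m)\times\SO(n)$, $\Sp(2m)\times\Sp(2n)$ of a nondegenerate subspace (Lemma \ref{lem:maxreducible}); one then inducts on $\dim G$. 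For exceptional $G$ the maximal $G$-cr candidates come from the Liebeck--Seitz tables. Without some such mechanism your ``case analysis by the possible connected reductive subgroups $H$'' is not a proof.

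The second missing idea is that the deformation theorem is used \emph{twice}, with two different base schemes. Besides $S=\Spec\BBZ$ for reduction mod $p$, the paper applies Theorem \ref{Theorem1} with $S=\BBA^1_k$ to degenerate a non-$G$-cr subgroup $H\subseteq P_\lambda$ onto its image $H^*=\pi_\lambda(H)$ in a Levi (Proposition \ref{Proposition1}), concluding that $H$ is spherical iff $H^*$ is; the actual conjugacy classes of such $H$ are then recovered from the generic cohomology groups $H^1_\gen(H^*,U)$ (Lemma \ref{lemma9}, Corollary \ref{cor:nonGcr}), which is how one sees that the only non-$G$-cr examples are $\SO(2n+1)\subset\SL(2n+1)$ and its dual. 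Your proposal has no route to the non-$G$-cr subgroups at all. Finally, a smaller but real inaccuracy: the genuinely new case $\G_2\times\Sp(2)\subset\Sp(8)$ for $p=2$ is \emph{not} settled by a dimension count together with ``finiteness of the generic isotropy''; proving it spherical requires the generic stabilizer theorem (Theorem \ref{thm:genstab}) combined with an explicit computation of $B$-semi-invariants using Weyl-module composition factors (Proposition \ref{prop:g2}). So the residual computations are heavier than the ``explicit root-subgroup'' checks you envisage.
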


Our classification is actually a bit more comprehensive, since we
classify the connected reductive spherical subgroups of all classical
groups not only up to outer but also up to inner automorphisms of
$G$. Here, by a \emph{classical group} we mean one of the groups
$\SL(n)$ ($n\ge2$), $\SO(n)$ ($n\ge1$), and $\Sp(n)$ ($n\ge2$ even)
which comprises also the non-simple groups $\SO(2)$ and $\SO(4)$. In
positive characteristic, the latter group contains infinitely many
``new'' spherical subgroups namely the images of $\Delta_q$, where
$\Delta_q:\SL(2)\to\GL(4)$ denotes the irreducible representation of
$\SL(2)$ with highest weight $(q+1)\omega_1$ with $q=p^m>1$. Since
$\Delta_q$ is selfdual, its image lies in $\SO(4)$.
We note that the images of $\Delta_q$ are special 
cases of finite orbit modules involving Frobenius twists, 
cf.~\cite[Lem.\ 2.6]{GLMS}.

\begin{table}[h!t]
\renewcommand{\arraystretch}{1.5}
\[
  \begin{array}{ll|lll}
\multicolumn{2}{l}{\text{Cases for all $p \ge 0$}} & 
\multicolumn{2}{l}{\text{Additional cases for $p > 0$}} \\
\hline\hline
H & G & H & G \\
\hline
\SO(n)^{(1)} & \SL(n)\ (n\ge2)\\
S(\GL(m){\times}\GL(n)) & \SL(m{+}n)\ (m\ge n\ge1) \\
\SL(m)\times\SL(n)&\SL(m{+}n)\ (m>n\ge1)\\
\Sp(2n)&\SL(2n)\ (n\ge2)\\
\Gm\cdot\Sp(2n)&\SL(2n+1)\ (n\ge1)\\
\Sp(2n)&\SL(2n+1)\ (n\ge1)\\
\hline
\GL(n)&\Sp(2n)\ (n\ge1)\\
\Gm\times\Sp(2n-2)&\Sp(2n)\ (n\ge2)\\
\Sp(m){\times}\Sp(n)&\Sp(m{+}n) &
\SO(m){\times}\SO(n)&\SO(m{+}n{-}1)& p=2  \\ 
\multicolumn{2}{l}{\quad\text{$m$, $n\ge2$ even}} \vline
&\multicolumn{2}{l}{\quad\text{$m$, $n\ge3$ odd}}\\
& &\G_2\times\SO(3)&\SO(9)& p=2 \\
\hline                                
\GL(n)^{(2)}&\SO(2n)\ (n\ge2)\\
\SL(n)&\SO(2n)\ (n\ge3\text{ odd})\\
\Sp(4)\otimes\Sp(2)^{(3) (4)}&\SO(8)\\
\Spin(7)^{(3) (5)}&\SO(8)\\
\G_2&\SO(8)\\
\SO(2)\times\Spin(7)&\SO(10)\\
\GL(n)&\SO(2n+1)\ (n\ge2)\\
\SO(m){\times}\SO(n)&\SO(m{+}n)& \SO(2m){\times}\Sp(2n) & \Sp(2m{+}2n)& p=2  \\
\quad m\ge n\ge1&&\multicolumn{2}{l}{\quad m\ge1, n\ge0}\\
\Spin(7)&\SO(9)&\Spin(7)&\Sp(8) & p=2 \\
\G_2&\SO(7)&\G_2&\Sp(6) & p=2 \\
& &\G_2\times\Sp(2)&\Sp(8) & p=2 \\
\hline
&&\Delta_q\SL(2)^{(3)} &\SO(4)&q>1\\
\hline                                
\multicolumn{5}{l}{\text{\Small
${}^{(1)}$ For $p=2$ and $n\ge3$ odd there are two 
classes 
which are swapped by an outer 
automorphism of~$G$.}}\\[-5pt]

\multicolumn{5}{l}{\text{\Small
${}^{(2)}$
For $n$ even there are two classes 
which are swapped by an outer 
automorphism  of~$G$.}}\\[-5pt]

\multicolumn{5}{l}{\text{\Small
$^{(3)}$ 
There are two conjugacy classes of $H$ in $G$ which are
swapped by an outer automorphism of~$G$.}}\\[-5pt]

\multicolumn{5}{l}{\text{\Small
$^{(4)}$ 
Using triality, $H\subset G$
is equivalent to $\SO(5)\times\SO(3)\subset\SO(8)$.}}\\[-5pt]

\multicolumn{5}{l}{\text{\Small
$^{(5)}$ 
Using triality, $H\subset G$
is equivalent to $\SO(7)\subset\SO(8)$.}}

 \end{array}  
\]

\smallskip
\caption{Spherical pairs $H\subset G$ with $G$ classical.} 
\label{table:sp-classical} 
\end{table}

\begin{table}[h!t]
\renewcommand{\arraystretch}{1.4}
  \begin{equation*}
  \begin{array}{ll|lll}
\multicolumn{2}{l}{\text{Cases for all $p \ge0$}} & 
\multicolumn{3}{l}{\text{Additional cases for $p > 0$}} \\ 
\hline\hline
H & G & H & G \\
\hline
\A_2&\G_2&\widetilde\A_2&\G_2& p=3\\
\A_1\times\widetilde\A_1&\G_2\\
\B_4&\F_4&\C_4&\F_4& p=2\\
\C_3\times\A_1&\F_4&\B_3\times\widetilde\A_1&\F_4& p=2 \\
\C_4&\E_6\\
\F_4&\E_6\\
\D_5&\E_6\\
\Gm\cdot\D_5&\E_6\\
\A_5\times\A_1&\E_6\\
\Gm\cdot\E_6&\E_7\\
\A_7&\E_7\\
\D_6\times\A_1&\E_7\\
\D_8&\E_8\\
\E_7\times\A_1&\E_8\\
\hline                                
 \end{array}  
  \end{equation*}\smallskip
\caption{Spherical pairs $H\subset G$ with $G$ exceptional.} 
\label{table:sp-exceptional} 
\end{table}


Note that the left-hand sides of 
Tables \ref{table:sp-classical} and \ref{table:sp-exceptional}
just reproduce Krämer's results. The cases on the right hand 
are new in positive characteristic. 
They are arranged in such a way that the case on the
right can be obtained from the one on the left 
by a non-central
isogeny of $G$. Thus, the only new case which has no counterpart in 
Krämer's table is the following
\begin{equation*}
\label{eq:1}
H=\G_2\times\Sp(2)\subset\Sp(6)\times\Sp(2)\subset G=\Sp(8)
\end{equation*}
for $p = 2$. Of course, there is also
$\G_2\times\SO(3)\subset\SO(9)$ which is isogenous to this case.

\vfill
\eject

In Table \ref{table:sp-exceptional},
$\widetilde\A_1$ and $\widetilde\A_2$ refer to a subgroup of $G$ of type
$\A_1$ and $\A_2$, respectively, whose root system only consists of short roots.

\section{Preliminaries}
\label{sec:basics}

\subsection{Notation}
Throughout, $G$ is a simple algebraic group 
and $B$ denotes a Borel subgroup of $G$.
By $\rk G$ we denote the rank of $G$.
Let $H$ be a closed subgroup of $G$.
Then $R_u(H)$ denotes the unipotent radical of $H$. 
If $G$ acts on the variety $X$, we denote the $H$-orbit of $x$ in $X$ by
$H \cdot x$ and the stabilizer in $H$ by $C_H(x)$.

In the sequel 
we use the labeling of the Dynkin diagram of a simple group $G$
according to the tables in Bourbaki \cite{bourbaki:groupes},
and $\omega_i$ denotes the $i$-th fundamental dominant weight 
of $G$ with respect to this labeling. 

For $\chi$ a dominant weight of $G$, we denote by $L(\chi)$ 
the irreducible $G$-module of highest weight $\chi$
and by $H^0(\chi)$ the $G$-module of global sections of
the $G$-line bundle $\CL(k_\chi)$ on $G/B$ afforded by the weight $\chi$,
so that $L(\chi) = \soc_G H^0(\chi)$. 
Note that $H^0(\chi)$ 
has the same character as the Weyl module of 
highest weight $\chi$; for details, see \cite[II.2]{jantzen}.

By a \emph{classical group} we mean one of the groups $\SL(n)$
($n\ge2$), $\SO(n)$ ($n\ge1$), or $\Sp(n)$ ($n\ge2$ even).
Here, $\SO(n)$ is the reduced, connected identity component of
$\O(n)$, i.e., the kernel of the determinant character $\det$ unless
$p=2$ and $n$ is even where $\det$ has to be replaced by the Dickson
invariant.

\subsection{Basic Results 
for Spherical Subgroups}
While elementary, one of our main tools 
to identify spherical subgroups 
(apart from Theorem \ref{Theorem1} below)
is the following necessary condition.

\begin{lemma} 
\label{lemma3}
Let $H\subseteq G$ be spherical in $G$. Then
\begin{equation}
\label{eq:2}
\dim H\ge\dim G/B=\frac{1}{2}(\dim G-\rk G).
\end{equation}
\end{lemma}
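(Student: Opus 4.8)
The plan is to deduce the inequality directly from the definition of sphericality via the orbit–stabilizer dimension formula, and then separately to evaluate $\dim G/B$ in terms of the root datum of $G$.

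First, recall that $H$ being spherical in $G$ means precisely that $H$ acts on the flag variety $G/B$ with a dense orbit. So one picks $x\in G/B$ with $\overline{H\cdot x}=G/B$; since $G/B$ is irreducible and $H\cdot x$ is locally closed, this forces $\dim H\cdot x=\dim G/B$. The orbit map $H\to H\cdot x$, $h\mapsto h\cdot x$, is a surjective morphism of varieties whose fibres are the cosets of the stabilizer $C_H(x)$, so $\dim H\cdot x=\dim H-\dim C_H(x)\le\dim H$. Combining these yields $\dim H\ge\dim G/B$, which is the desired inequality \eqref{eq:2}.

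It then remains to record the identity $\dim G/B=\tfrac12(\dim G-\rk G)$. Choosing a maximal torus $T\subset B$ and writing $B=T\cdot R_u(B)$, one has $\dim B=\rk G+\dim R_u(B)$, while the big cell of the Bruhat decomposition identifies an open subset of $G/B$ with the opposite unipotent radical, giving $\dim G/B=\dim R_u(B)$. Hence $\dim G=\dim B+\dim G/B=\rk G+2\dim G/B$, and solving for $\dim G/B$ proves the claim. The argument is entirely formal and I do not anticipate any genuine obstacle; the only point requiring mild care is that each ingredient---the local closedness of orbits of algebraic group actions, the dimension formula for the orbit map, and the structure of the big cell---is valid over an arbitrary algebraically closed field, so the bound holds in every characteristic.
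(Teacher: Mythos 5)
Your proof is correct and is essentially the paper's argument in dual form: the paper bounds $\dim B\ge\dim G/H$ using the open $B$-orbit on $G/H$, while you bound $\dim H\ge\dim G/B$ using the dense $H$-orbit on $G/B$; these are the same dimension count under the two equivalent definitions of sphericality stated in the introduction. Your extra verification of $\dim G/B=\tfrac12(\dim G-\rk G)$ via the big cell is fine but left implicit in the paper.
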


\begin{proof}
By definition, $B$ has an open
orbit in $G/H$. Hence $\dim B\ge\dim G/H$ which is equivalent to
\eqref{eq:2}.
\end{proof}

In the sequel we 
use the following ``transitivity'' property for 
spherical subgroups without further comment.

\begin{lemma} 
\label{lem:subgroups}
Let $H_1\subseteq H_2\subseteq G$ be connected 
reductive subgroups of $G$.
If  $H_1$ is spherical in $G$,
then $H_1$ is spherical in $H_2$ and  $H_2$ is spherical in $G$.
\end{lemma}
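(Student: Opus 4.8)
\emph{Proof plan.} The plan is to work with the standard reformulation of sphericality in terms of a dense product of subgroups: for a connected group $K$ and a closed subgroup $H\subseteq K$, the subgroup $H$ is spherical in $K$ if and only if some (equivalently, by conjugacy of Borel subgroups, any) Borel subgroup $B_K$ of $K$ satisfies that $B_KH$ is dense in $K$. Indeed the quotient morphism $K\to K/H$ is open and the set $B_KH$ is right $H$-invariant, so $B_KH$ is dense in $K$ precisely when the $B_K$-orbit of the base point is dense in $K/H$, i.e.\ open there.

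First I would dispose of the claim that $H_2$ is spherical in $G$. Fix a Borel subgroup $B$ of $G$ with $BH_1$ dense in $G$; such a $B$ exists since $H_1$ is spherical in $G$. Because $H_1\subseteq H_2$, the set $BH_2$ contains $BH_1$ and hence is dense in $G$, so $H_2$ is spherical in $G$.

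Next, for the sphericality of $H_1$ in $H_2$, I would use the same $B$. Since $BH_1$ is a $B$-orbit in $G$ it is open, and it contains $1\in H_2$, so $H_2\cap BH_1$ is a nonempty open, hence dense, subset of $H_2$. The key observation is the identity $H_2\cap BH_1=(B\cap H_2)H_1$: if $h=bg$ with $h\in H_2$, $b\in B$, $g\in H_1$, then $b=hg^{-1}\in H_2$ because $H_1\subseteq H_2$, so $b\in B\cap H_2$. Thus $(B\cap H_2)H_1$ is dense in $H_2$. The subgroup $B\cap H_2$ is solvable with only finitely many connected components, so $(B\cap H_2)H_1$ is a finite union of translates of the irreducible set $(B\cap H_2)^{\circ}H_1$; as this finite union is dense in the irreducible variety $H_2$, the set $(B\cap H_2)^{\circ}H_1$ is itself dense in $H_2$. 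Finally, choosing a Borel subgroup $B_2$ of $H_2$ containing the connected solvable group $(B\cap H_2)^{\circ}$ (possible by the Borel fixed point theorem applied to the complete variety $H_2/B_2$), we obtain that $B_2H_1\supseteq (B\cap H_2)^{\circ}H_1$ is dense in $H_2$, which is exactly the statement that $H_1$ is spherical in $H_2$.

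The argument is essentially formal, and I do not expect a real obstacle; the only point requiring a little care is that $B\cap H_2$ need not be connected, which is handled by the finite-union/irreducibility step. I would also note that connectedness of $H_2$ (used to know that $H_2$, and hence $H_2/H_1$, is irreducible) is the only hypothesis beyond "closed subgroup" that is genuinely needed here.
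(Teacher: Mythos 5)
Your proposal is correct, but it follows a genuinely different route from the paper. The paper proves that $H_1$ is spherical in $H_2$ by choosing a Borel subgroup $B_2$ of $H_2$, finding a Borel subgroup $B$ of $G$ with $B_2=H_2\cap B$ (citing \cite[Cor.\ 2.5]{BMR}), embedding $H_2/B_2$ into $G/B$, and then invoking the finiteness theorem for orbits of a spherical subgroup on $G/B$ \cite[Cor.\ 2.6]{Knop3}: finitely many $H_1$-orbits on the irreducible variety $H_2/B_2$ forces a dense one. You instead work entirely with dense double cosets: from $BH_1$ open dense in $G$ you extract $H_2\cap BH_1=(B\cap H_2)H_1$ open dense in $H_2$, pass to the identity component of $B\cap H_2$ via the finite-union/irreducibility argument, and enlarge it to a Borel subgroup of $H_2$. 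Your argument is more elementary --- it avoids both the nontrivial finiteness theorem (whose validity in arbitrary characteristic is itself a substantive result) and the compatibility statement $B_2=H_2\cap B$ --- at the cost of being slightly longer; the paper's argument is shorter but leans on heavier machinery that it needs elsewhere anyway. One small imprecision in your write-up: $BH_1$ is not a $B$-orbit in $G$ but rather an orbit of $B\times H_1$ acting by $(b,h)\cdot g=bgh^{-1}$ (equivalently, the preimage of the dense, hence open, $B$-orbit in $G/H_1$ under the open quotient map); either description yields the openness you need, so this is a matter of phrasing rather than a gap.
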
 

\begin{proof}
Suppose that $H_1$ is spherical in $G$.
Then $H_1$ acts on $G/B$ with a dense orbit
and so does $H_2$ and thus $H_2$ is spherical in $G$. 

Let $B_2 \subset H_2$ be a Borel subgroup of $H_2$.
Then there is a Borel subgroup $B$ of $G$ such that $B_2 = H_2 \cap B$, 
e.g.\ see \cite[Cor.\ 2.5]{BMR}.
Consider the canonical embedding $H_2/B_2 \to G/B$.
Thanks to the finiteness result for irreducible, spherical $G$-varieties in 
arbitrary characteristic, \cite[Cor.\ 2.6]{Knop3}, 
since $H_1$ is spherical in $G$, 
$H_1$ admits only a finite number of orbits in $G/B$.
Thus there is only a finite number of $H_1$-orbits in
$H_2/B_2$ and in particular, there is a dense one.
Consequently, $H_1$ is spherical in $H_2$.
\end{proof}

The following compatibility of sphericality for
direct products is immediate from the definition of a spherical subgroup
and is also used in the sequel without further reference.

\begin{lemma} 
\label{lem:products}
Let $H_i\subseteq G_i$ be a 
reductive subgroup of $G_i$
for $i = 1,2$.
Then $H_1 \times H_2$ is spherical in $G_1 \times G_2$ if and only if
$H_i$ is spherical in $G_i$ for both $i =1,2$.
\end{lemma}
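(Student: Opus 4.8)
The plan is to reduce the statement to the elementary fact that a product group acting coordinatewise on a product of irreducible varieties has a dense orbit if and only if each factor does. First I would fix a Borel subgroup $B_i$ of $G_i$ for $i=1,2$. Then $B_1\times B_2$ is a Borel subgroup of $G_1\times G_2$, and the flag variety of the product is canonically identified with the product of flag varieties, $(G_1\times G_2)/(B_1\times B_2)\cong G_1/B_1\times G_2/B_2$, on which $H_1\times H_2$ acts factorwise.

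Next I would observe that for a point $(x_1B_1,x_2B_2)$ the $(H_1\times H_2)$-orbit is exactly the product $(H_1\cdot x_1B_1)\times(H_2\cdot x_2B_2)$ of the two individual orbits. Using the standard identity $\overline{A\times C}=\overline A\times\overline C$ for Zariski closures in a product of varieties over an algebraically closed field, this orbit is dense in $G_1/B_1\times G_2/B_2$ precisely when $H_i\cdot x_iB_i$ is dense in $G_i/B_i$ for each $i$. Since the two factors can be treated independently, one may choose the $x_i$ so that the orbit $H_i\cdot x_iB_i$ has maximal dimension in $G_i/B_i$; hence $H_1\times H_2$ admits a dense orbit in the flag variety of $G_1\times G_2$ if and only if $H_i$ admits a dense orbit in $G_i/B_i$ for both $i=1,2$, which is exactly the assertion. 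Equivalently, the last step can be phrased as a dimension count: the orbit is an irreducible subvariety of dimension $\dim(H_1\cdot x_1B_1)+\dim(H_2\cdot x_2B_2)$ inside the irreducible variety $G_1/B_1\times G_2/B_2$ of dimension $\dim G_1/B_1+\dim G_2/B_2$, so it is dense if and only if each summand is as large as possible.

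There is essentially no obstacle here, the statement being immediate from the definition once the flag variety of a product has been identified with the product of flag varieties; the reductivity hypothesis is in fact not needed. The only point requiring a word of care is the behaviour of Zariski closures under products, where irreducibility of the factors (equivalently, that $k$ is algebraically closed) enters.
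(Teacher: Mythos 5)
Your proof is correct and is exactly the argument the paper has in mind: the paper states this lemma without proof, calling it ``immediate from the definition,'' and your write-up (flag variety of a product is the product of flag varieties, orbits and closures factor, so density is checked componentwise) is just that argument made explicit.
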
 

Sometimes the following stronger bound on $\dim H$ is needed
in place of the inequality \eqref{eq:2}. 

\begin{lemma} 
\label{lemma3refined}
Let $H\subseteq G$ be spherical with 
$B\cdot x_0\in G/H$ the open $B$-orbit in $G/H$. Then
\begin{equation}
\label{eq:2refined}
\dim H=\dim G/B+\dim C_B(x_0).
\end{equation}
\end{lemma}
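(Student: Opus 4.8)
The plan is to track dimensions along the orbit map $B \to B \cdot x_0$, where $x_0 = eH \in G/H$ may be taken as the base point without loss of generality. Since $H$ is spherical, $B \cdot x_0$ is open in $G/H$, so $\dim B \cdot x_0 = \dim G/H = \dim G - \dim H$. On the other hand, the orbit map $b \mapsto b \cdot x_0$ from $B$ onto $B \cdot x_0$ has fibers that are cosets of the stabilizer $C_B(x_0)$, so $\dim B \cdot x_0 = \dim B - \dim C_B(x_0)$.

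Combining these two expressions for $\dim B \cdot x_0$ gives $\dim G - \dim H = \dim B - \dim C_B(x_0)$, and rearranging yields
\begin{equation*}
\dim H = \dim G - \dim B + \dim C_B(x_0) = \dim G/B + \dim C_B(x_0),
\end{equation*}
which is exactly \eqref{eq:2refined}. Note that passing from the open orbit being dense to the equality of dimensions requires the orbit to be a locally closed smooth subvariety of $G/H$; this is automatic since orbits of algebraic group actions are smooth and locally closed, and an open dense subvariety has full dimension.

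The only genuinely delicate point — though it is standard — is the fiber-dimension count for the orbit map $B \to B \cdot x_0$: one needs that this morphism has all fibers of the same dimension $\dim C_B(x_0)$, which holds because the fibers are precisely the left cosets $b\, C_B(x_0)$. In characteristic $p$ the map $B \to B \cdot x_0$ need not be separable, but this does not affect the dimension equality $\dim B = \dim B \cdot x_0 + \dim C_B(x_0)$, which only concerns dimensions of varieties and not the structure of the scheme-theoretic stabilizer. So no real obstacle arises; the proof is a short dimension-chase formalizing the heuristic already used in the proof of Lemma~\ref{lemma3}, now keeping the stabilizer term rather than discarding it via the inequality $\dim B \ge \dim B \cdot x_0$.
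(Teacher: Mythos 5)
Your argument is correct and is essentially identical to the paper's proof, which likewise combines the orbit--stabilizer dimension formula $\dim B - \dim C_B(x_0) = \dim B\cdot x_0$ with the openness of the orbit giving $\dim B\cdot x_0 = \dim G/H = \dim G - \dim H$. Your extra remarks on local closedness and inseparability are harmless elaborations of the same one-line dimension chase.
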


\begin{proof}
This follows, as $\dim B - \dim C_B(x_0) = \dim B\cdot x_0 = \dim  G/H
= \dim G - \dim H$.
\end{proof}

We also frequently use the following observation.

\begin{lemma}
\label{lem:isogeny}
  Let $G_1$ and $G_2$ be connected reductive groups and $\varphi:G_1\to
  G_2$ an isogeny.
Then $\varphi$ induces a bijection between the sets of
  (conjugacy classes of) connected (reductive) spherical subgroups of
  $G_1$ and $G_2$.
\end{lemma}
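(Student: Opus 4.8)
The plan is to prove Lemma~\ref{lem:isogeny} by reducing everything to the defining property ``open orbit on $G/B$'' and exploiting the fact that an isogeny $\varphi:G_1\to G_2$ is a finite surjective morphism with central kernel $N$, and that it carries Borel subgroups to Borel subgroups. First I would set up the correspondence: given a closed subgroup $H_1\subseteq G_1$, put $H_2:=\varphi(H_1)$, which is closed since $\varphi$ is a closed map (finite morphisms are closed); conversely, given a closed $H_2\subseteq G_2$, put $H_1:=\varphi^{-1}(H_2)$, which is closed and contains $N$. These two operations are mutually inverse \emph{on subgroups containing $N$}, but since $N$ is finite and central this is harmless at the level of conjugacy classes, sphericality, reductivity, and connectedness: I would note that $H_1$ and $\varphi(H_1)$ differ only by the finite central subgroup $H_1\cap N$, hence $H_1$ is connected iff $\varphi(H_1)^\circ$ accounts for all of $\varphi(H_1)$ up to that finite kernel — more precisely $\varphi$ restricts to an isogeny $H_1^\circ\to \varphi(H_1)^\circ$, so connectedness and reductivity transfer in both directions (reductivity because $R_u$ is preserved under isogeny, and a central finite extension of a reductive group is reductive).

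Next I would handle the geometric heart: if $B_1\subseteq G_1$ is a Borel subgroup then $B_2:=\varphi(B_1)$ is a Borel subgroup of $G_2$, and $\varphi$ induces a finite surjective $G_1$-equivariant morphism $\bar\varphi:G_1/B_1\to G_2/B_2$ (here $G_1$ acts on $G_2/B_2$ through $\varphi$); in fact since $N\subseteq B_1$ (any Borel of a reductive group contains the whole center, as the center lies in every maximal torus), this map is an \emph{isomorphism} $G_1/B_1\xrightarrow{\ \sim\ }G_2/B_2$. Under this isomorphism the $H_1$-action on $G_1/B_1$ matches the $\varphi(H_1)$-action on $G_2/B_2$ pulled back along $\varphi|_{H_1}$; since $\varphi(H_1)$ and $\varphi(H_1)$ acting via the surjection $H_1\twoheadrightarrow\varphi(H_1)$ have exactly the same orbits, $H_1$ has a dense orbit on $G_1/B_1$ if and only if $\varphi(H_1)$ has a dense orbit on $G_2/B_2\cong G_1/B_1$. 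This gives the equivalence of sphericality. Finally I would observe that conjugacy is respected because $\varphi$ maps $g H_1 g^{-1}$ to $\varphi(g)\varphi(H_1)\varphi(g)^{-1}$ and is surjective on points, so conjugate subgroups go to conjugate subgroups and, using the inverse correspondence on subgroups containing $N$, non-conjugate ones stay non-conjugate; this yields the claimed bijection of conjugacy classes.

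The step I expect to require the most care — though it is not deep — is verifying that $\varphi$ genuinely induces a bijection on the relevant set of subgroups, i.e.\ pinning down that one should work with subgroups of $G_1$ containing $\ker\varphi$ and that this restriction costs nothing because every connected subgroup $H_1$ with $\dim H_1>0$ either already contains the finite central $N$ or can be replaced by $H_1\cdot N$ without changing its image, its sphericality (by the $G_1/B_1\cong G_2/B_2$ identification), or its conjugacy class, and symmetrically on the $G_2$ side every closed subgroup is $\varphi$ of its full preimage. Once that bookkeeping is in place, the sphericality equivalence is immediate from the equivariant isomorphism $G_1/B_1\cong G_2/B_2$, and preservation of reductivity and connectedness under isogeny is standard. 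I would phrase the final argument so that it applies verbatim to the variants ``connected spherical'' and ``connected reductive spherical'' simultaneously, since each adjective is individually preserved by $\varphi$ and $\varphi^{-1}$.
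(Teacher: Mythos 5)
The paper actually states Lemma~\ref{lem:isogeny} without any proof --- it is treated as immediate and is followed directly by its consequences in Remark~\ref{rem:isogeny} --- so there is no argument of the authors to compare yours against. Your write-up supplies essentially the standard argument and it is correct in substance: images and connected components of preimages give mutually inverse maps on connected closed subgroups, conjugacy, connectedness and reductivity pass through the finite-kernel quotient, and sphericality is detected on the induced map of flag varieties. The one claim you should weaken is that $\bar\varphi:G_1/B_1\to G_2/B_2$ is an \emph{isomorphism} because the reduced kernel $N$ lies in $B_1$. That inference only yields bijectivity on points; when $\varphi$ is a non-central isogeny in positive characteristic (a Frobenius morphism, or the special isogeny $\SO(2n+1)\to\Sp(2n)$ for $p=2$, which is precisely the case the paper invokes in Remark~\ref{rem:isogeny}(ii)), the scheme-theoretic kernel is infinitesimal and not contained in $B_1$, and $\bar\varphi$ is a purely inseparable bijective morphism, not an isomorphism of varieties. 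This does not break your proof: $\bar\varphi$ is finite, hence closed, hence a homeomorphism for the Zariski topology, and a homeomorphism is all you need to transport dense $H_1$-orbits to dense $\varphi(H_1)$-orbits and back. I would simply replace ``isomorphism'' by ``finite bijective, hence a homeomorphism'' and the argument is complete and covers every isogeny the paper uses.
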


Lemma \ref{lem:isogeny}  has several immediate consequences.

\begin{remark}
\label{rem:isogeny}
(i) The triality automorphism of $\Spin(8)$ acts on the conjugacy
classes of connected reductive spherical subgroups of $\SO(8)$, as
well. This action is  indicated in Table~\ref{table:sp-classical}.

(ii) In characteristic $p=2$, there is a bijective non-central isogeny
$\SO(2n+1)\to\Sp(2n)$. Thus, if $G$ is a classical group we may (and
will) safely assume that $G$ is \emph{strictly classical} in the sense
that $G$ is not isomorphic to $\SO(2n+1)$ where $n\ge1$ when $p=2$.
Equivalently, a classical group is strictly classical if its natural
representation is completely reducible.
\end{remark}

\section{Deformation of Spherical Subgroups}
\label{sec:deformation}
In this section, we prove that ``sphericality'' is invariant under
deformations. This enables us to compare spherical subgroups in
positive characteristic to those in characteristic zero. This approach
reduces most of the classification work to Krämer's paper \cite{Kr}.

For simplicity, we restrict ourselves to base schemes $S$ which are of
the form $\Spec A$, where $A$ is a Dedekind domain\footnote{Our main
  assertion is surely valid in greater generality but due to technical
  difficulties stemming from the construction of coset schemes in
  \cite{Anan} and closures of subgroup schemes in \cite{bruhattits}
    we stick to Dedekind rings.}, i.e., an integrally closed
  Noetherian domain of dimension $1$.

In the sequel, let $\CG\to S$ be a split reductive group scheme (this
entails connected geometric fibers), e.g., see 
\cite[Exp.~I, 4.2]{demazurgrothendieck:schemasengroupes}.  
Let $\CT$ be a split maximal torus of $\CG$. 
Using \cite[Exp.~XXII, Cor.\ 5.5.5(iii)]{demazurgrothendieck:schemasengroupes}, 
a Borel subgroup scheme $\CB$ of $\CG$ containing $\CT$ has the
form $\CB_{R^+}$,  where $R^+$ is a system of positive roots for $\CG$. 
Then thanks to 
\cite[Exp.~XXII, Lem.\ 5.5.6(iii)]{demazurgrothendieck:schemasengroupes}, 
$\CB$ is the semidirect product $\CB = \CT \cdot \CU$ for a smooth
subgroup scheme $\CU$, and $\CU_k$ is a maximal connected unipotent subgroup
in $\CG_k$ for any $A$-algebra $k$ which is a field. 
Let $\Xi$
be the character group of $\CB$.  For $\CX\to S$ an affine $S$-scheme
let $\CO(\CX)$ be its ring of regular functions.

We need the following extension property for invariants due to
Seshadri \cite{Sesh}. See also \cite{FvdK} for a simplified approach.

\begin{lemma} 
\label{lemma1} 
Let $\CX\to  S$ be an affine $\CG$-scheme and
$\CY\subseteq\CX$ a closed $\CG$-invariant subscheme of $\CX$. Then
for any $\CG$-invariant function $f\in\CO(\CY)^\CG$ there is an
$n\ge1$ such that $f^n$ extends to a $\CG$-invariant function $\fbar$ on $\CX$.
\end{lemma}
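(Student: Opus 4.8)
The plan is to recognise the statement as a version of Seshadri's extension theorem for invariants and to deduce it from the geometric reductivity of $\CG$ over $A$. First I would translate everything into commutative algebra: a closed $\CG$-invariant subscheme $\CY\subseteq\CX$ is the same datum as a $\CG$-equivariant surjection of $A$-algebras $\pi\colon\CO(\CX)\twoheadrightarrow\CO(\CY)$ whose kernel $I$ is a $\CG$-stable ideal, and $\CO(\CY)^\CG=(\CO(\CX)/I)^\CG$. So the assertion becomes: for every $f\in(\CO(\CX)/I)^\CG$, some power $f^n$ lies in the image of the restriction map $\CO(\CX)^\CG\to(\CO(\CX)/I)^\CG$.

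The one genuinely nontrivial input I would invoke is that $\CG$ is \emph{geometrically reductive} over $A$. Since $\CG\to S$ is split reductive, it is obtained by base change along $\BBZ\to A$ from the corresponding Chevalley group scheme $\CG_\BBZ$, whose geometric reductivity is Seshadri's theorem \cite{Sesh} (over a residue field this amounts to the Reynolds operator in characteristic $0$ and to Haboush's theorem, resolving Mumford's conjecture, in positive characteristic; the real work lies in propagating it over a Noetherian base). Geometric reductivity is stable under base change, so $\CG$ is geometrically reductive over the Dedekind domain $A$: for every $\CG$-module $M$ that is finite projective over $A$ and every $\CG$-equivariant surjection $M\to A$ onto the trivial module, the induced map $\mathrm{Sym}^dM\to\mathrm{Sym}^dA=A$ is surjective on $\CG$-invariants for some $d\ge1$.

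With this in hand I would run the standard symmetric-powers argument. Every $\CG$-module is locally finite as an $\CO(\CG)$-comodule, so, lifting $f$ to some $x\in\CO(\CX)$, the $\CG$-submodule $M\subseteq\CO(\CX)$ generated by $x$ is finite over $A$; in our applications $\CX$ is flat over $S$, so $\CO(\CX)$ is $A$-torsion free and $M$, being finitely generated over the Dedekind domain $A$, is projective (in general one reduces to this case as in \cite{Sesh}). Since $f=\pi(x)$ is invariant, the submodule $A\cdot f\subseteq\pi(M)$ is $\CG$-stable, hence so is its preimage $M_0:=(\pi|_M)^{-1}(A\cdot f)\subseteq M$, and $\pi|_{M_0}\colon M_0\to A\cdot f\cong A$ is a $\CG$-equivariant surjection onto the trivial module. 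Geometric reductivity now yields $d\ge1$ and a $\CG$-invariant $\Phi\in\mathrm{Sym}^dM_0$ which maps to the generator $f^{\otimes d}$ of the rank-one module $\mathrm{Sym}^d(A\cdot f)$ under $\mathrm{Sym}^d(\pi|_{M_0})$. Applying the $\CG$-equivariant multiplication map $\mathrm{Sym}^dM\to\CO(\CX)$ to $\Phi$ produces an invariant $\fbar\in\CO(\CX)^\CG$ with $\pi(\fbar)=f^d$: since $\mathrm{Sym}^d(A\cdot f)$ is free of rank one no cross terms arise, and multiplication sends $f^{\otimes d}$ to $f^d$. This is the desired extension, with $n=d$.

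The one real obstacle is the geometric reductivity input itself. Already over a field it is a deep theorem — Haboush's theorem in positive characteristic — and propagating it over the Dedekind base $A$, in particular coping with $A$-torsion in $\CO(\CX)$ where $A\cdot f$ need no longer be free, is precisely what \cite{Sesh} accomplishes (see \cite{FvdK} for a streamlined treatment). Everything else — the reformulation via the ideal $I$, the local finiteness of $\CO(\CG)$-comodules, and the bookkeeping in symmetric powers — is routine.
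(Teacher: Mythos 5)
Your proposal is correct and follows essentially the same route as the paper, which proves nothing here itself but simply invokes Seshadri's extension theorem \cite{Sesh} (see also \cite{FvdK}); what you have written out is precisely the standard derivation of that theorem from geometric (power) reductivity of $\CG$ over the base, via the symmetric-powers argument. The only delicate points --- torsion in $\CO(\CY)$ when $A\cdot f$ fails to be free, and the propagation of geometric reductivity over the Dedekind base --- are exactly the ones you flag as being handled in \cite{Sesh} and \cite{FvdK}, so nothing is missing.
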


Next we prove that the extension property from Lemma \ref{lemma1} 
also holds for $\CB$-semi-invariants.

\begin{lemma} 
\label{lemma2}
Let $\CX$ and $\CY$ be as in Lemma \ref{lemma1}. Let $f\in\CO(\CY)$ be a
$\CB$-semi-invariant function for a character $\chi\in\Xi$. Then there
is an exponent $n\ge1$ such that $f^n$ extends to a
$\CB$-semi-invariant function $\fbar\in\CO(\CX)$ for the character
$n\chi$.
\end{lemma}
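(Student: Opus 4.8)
The strategy is to reduce the semi-invariant statement to the invariant statement of Lemma~\ref{lemma1} by a standard ``cone'' or ``induction'' construction that turns $\CB$-semi-invariants into $\CB$-invariants, or alternatively into $\CG$-invariants on a larger scheme. Concretely, first I would pass from the character $\chi$ to a genuine representation: since $\Xi$ is the character group of $\CB$ and $\CB = \CT\cdot\CU$ with $\CU$ acting trivially on characters, $\chi$ factors through $\CT$, and $\CB$ acts on the rank-one free $A$-module $A_\chi$ via $\chi$. The function $f\in\CO(\CY)$ being a $\CB$-semi-invariant of weight $\chi$ means precisely that the section $f$ of the trivial bundle, twisted by $A_{-\chi}$, is $\CB$-invariant; equivalently $f$ defines a $\CB$-equivariant map $\CY\to \BBA^1_{-\chi}$, where $\BBA^1_{-\chi}$ carries the $\CB$-action through the character $-\chi$.

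The key step is then to globalize the $\CB$-action to a $\CG$-action by passing to the associated bundle. Form $\CX' := \CG\times^\CB \BBA^1_\chi \to \CG/\CB$ — more usefully, form the affine $S$-scheme obtained as follows. Consider the $\CG$-scheme $\CG\times \CX$ with $\CB$ acting on the $\CG$-factor by right translation (and trivially on $\CX$); the quotient is $\CG\times^\CB\CX$, but since $\CX$ is a $\CG$-scheme we have $\CG\times^\CB\CX \cong (\CG/\CB)\times\CX$ as schemes. Rather than this, the cleanest route is: let $\CZ := \CG\mod_\CB \BBA^1_{-\chi}$ live over $S$ together with $\CX$, i.e. consider $\CX\times^S \CL$ where $\CL$ is the total space of the line bundle on $\CG/\CB$ attached to $-\chi$ — but this does not obviously produce an \emph{affine} scheme. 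The honest fix, and the one I would carry out, is instead to use the subgroup $\CU\subseteq\CB$: a $\CB$-semi-invariant is in particular a $\CU$-invariant, and $\CT$ acts on the (finitely many) weight spaces. So I would invoke Lemma~\ref{lemma1} with the group $\CG$ \emph{replaced by} $\CU$ — but Lemma~\ref{lemma1} is stated for reductive $\CG$, so this does not directly apply either.

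Thus the genuinely robust argument is the Rees/cone construction. Let $\CX_\chi\to S$ be the affine $\CG$-scheme whose ring of functions is $\CO(\CX_\chi) := \bigoplus_{n\ge0}\CO(\CX)\otimes_A A_{n\chi}$, i.e. $\CX_\chi = \CX\times_S (\text{the }\Ga\text{-line on which }\CB\text{ acts by }\chi)$, with the diagonal $\CG$-action where $\CG$ acts on $\CX$ as given and on the extra $\BBA^1$-factor \emph{not at all} — instead one arranges the $\CB$-linearization so that $\CB$-semi-invariants of weight $n\chi$ on $\CX$ become, after multiplying by the $n$-th power of the coordinate $t$ on $\BBA^1$, honest $\CB$-invariants on $\CX\times\BBA^1$. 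Now $\CB$-invariance is a closed condition and $\CB = \CT\cdot\CU$ is solvable, not reductive, so I still cannot invoke Lemma~\ref{lemma1} directly; but $\CB$-invariants are the same as taking $\CU$-invariants then $\CT$-invariants, and $\CT$-invariance of a semi-invariant of weight $0$ is automatic over a Dedekind base. Hence the real content is: (1) form $\CY\times_S\BBA^1\subseteq\CX\times_S\BBA^1$, a closed $\CB$-invariant subscheme; (2) the element $ft\in\CO(\CY\times_S\BBA^1)$ is $\CB$-invariant for the \emph{trivial} character after re-linearizing, so it is in particular $\CG'$-invariant for the reductive group $\CG' := \CG$ acting through $\CB$ on the $\BBA^1$-factor — no, this is circular. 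The clean statement is that $ft$ is a $\CB$-invariant \emph{regular function}, and I now apply the \emph{same proof} as Lemma~\ref{lemma1} (Seshadri's theorem, whose argument in \cite{Sesh} and \cite{FvdK} works verbatim for any flat affine group scheme of finite type with the integral extension property, in particular for the solvable group scheme $\CB$) to obtain that some power $(ft)^n = f^n t^n$ extends to a $\CB$-invariant $\bar g\in\CO(\CX\times_S\BBA^1)$; writing $\bar g = \sum_j \bar g_j t^j$ and extracting the $t^n$-component $\bar g_n$, which is then a $\CB$-semi-invariant of weight $n\chi$ on $\CX$ restricting to $f^n$ on $\CY$, gives $\fbar := \bar g_n$ as required.

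\textbf{Main obstacle.} The crux is that Lemma~\ref{lemma1}/Seshadri is stated for \emph{reductive} $\CG$, whereas we need the analogous extension property for the \emph{Borel} group scheme $\CB$, which is only solvable. I expect the honest resolution to be one of: (a) verify that the Seshadri/van der Kallen argument (reduction to the case $\CX = \CG$, using a Reynolds-type operator or the existence of enough invariants, together with the finite generation and integral-closure properties over the Dedekind base $A$) goes through unchanged for $\CB$ in place of $\CG$; or (b) genuinely build the associated-bundle scheme $\CG\times^\CB\BBA^1_{-\chi}$, show it is quasi-affine with affine-hull whose ring is $\bigoplus_n H^0(\CG/\CB,\CL(-n\chi))$-graded, pull everything back along $\CX\to S$, reduce the $\CB$-semi-invariant on $\CY$ to a genuine $\CG$-invariant on the total space $\CX\times^S(\CG\times^\CB\BBA^1)$ (which \emph{is} affine after passing to a suitable affine $\CG$-variety dominating it, e.g. $\CG\times\BBA^1\to\CG\times^\CB\BBA^1$ with $\CG$ itself being the relevant affine $\CG$-variety), and then apply Lemma~\ref{lemma1} directly before descending the power of the invariant back down. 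Route (b) is conceptually cleaner and avoids re-proving Seshadri; the technical cost there is checking affineness and $\CG$-invariance after the twist, and keeping track of the grading so that the extracted homogeneous component lands on $\CX$ itself with the correct weight $n\chi$.
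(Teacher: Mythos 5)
The route you actually commit to --- the cone construction followed by ``the same proof as Lemma~\ref{lemma1}'' applied to the solvable group scheme $\CB$ --- rests on a false premise. Seshadri's extension theorem is not a formal consequence of flatness and finite type; it is essentially equivalent to power (geometric) reductivity, and this fails for unipotent groups. Concretely, let $\Ga$ act on $\BBA^2=\Spec k[x,y]$ by $t\cdot y=y+tx$, $t\cdot x=x$, and let $Y=\{x=0\}$, a $\Ga$-stable closed subscheme on which $\Ga$ acts trivially. Then $y|_Y$ is invariant, but the invariant ring of $\BBA^2$ is $k[x]$, so no power of $y|_Y$ extends to an invariant function. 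Hence the extension property for $\CU$-invariants (and so for $\CB$-semi-invariants) fails in general, and your argument never uses the one hypothesis that saves the day here, namely that $\CY\subseteq\CX$ are stable under the full reductive group $\CG$ (the subscheme $\{x=0\}$ above is not $\SL(2)$-stable). So route (a), and with it the claim that the Seshadri/van der Kallen argument ``works verbatim'' for $\CB$, is exactly the gap.

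Your route (b), which you sketch but do not carry out, is essentially the paper's actual argument, and it is where the $\CG$-stability of $\CY$ enters. One forms the basic affine space $\CG\mod\CU:=\Spec\bigl(\oplus_{\chi\in\Xi}H^0(\chi)\bigr)$ --- affine by construction, flat over $S$ since each $H^0(\chi)$ is a free $A$-module, and containing $\CG/\CU$ as a dense open subset with an $S$-point $e$ --- and uses the transfer isomorphism $\CO(\CX\times_S\CG\mod\CU)^\CG\cong\CO(\CX)^\CU$ given by restriction along $\id\times e$ (proof of Lemma~24 in \cite{FvdK}), which is equivariant for the $\CT$-action coming from the $\Xi$-grading. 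This converts the $\CB$-semi-invariant $f$ on $\CY$ into a genuine $\CG$-invariant on the affine $\CG$-scheme $\CY\times_S\CG\mod\CU$, to which Lemma~\ref{lemma1} does apply; linear reductivity of $\CT$ then projects the extended invariant onto the weight-$n\chi$ component, which is the extraction step you correctly anticipated at the end. The missing ingredients are thus precisely the affineness of $\CG\mod\CU$ and the transfer isomorphism; with those supplied, route (b) closes the argument, and route (a) should be discarded.
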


\begin{proof} 
Let $\CG\mod\CU$ be the basic affine space of $\CG$; it is the
spectrum of $\oplus_{\chi\in\Xi}H^0(\chi)$.
(Note that $H^0(\chi)$ is a free $A$-module, thanks to 
\cite[II 8.8]{jantzen}.)
Thus $\CG\mod\CU$ is an affine
scheme over $S$ which contains $\CG/\CU$ as dense open subset. In
particular, $\CG\mod\CU$ contains an $S$-point $e$. The $\Xi$-grading
of $\CO(\CG\mod\CU)$ induces an action of $\CT=\CB/\CU$
which commutes with the $\CG$-action.

Consider the closed embedding $\CX  \xrightarrow{\id \times e}\CX\times_S\CG\mod\CU$. Then it is well-known (\cite{FvdK} proof of
Lemma 24) that restriction to $\CX$ induces a $\CT$-equivariant
isomorphism
$$
\CO(\CX\times_S\CG\mod\CU)^\CG \xrightarrow{\sim} \CO(\CX)^\CU.
$$
Thus our assertion follows from Lemma \ref{lemma1} applied to
$\CY\times_S\CG\mod\CU\subseteq\CX\times_S\CG\mod\CU$ and the fact
that $\CT$ is linearly reductive.
\end{proof}

\begin{remark}
\label{rem1}
If $\CY$ is actually defined over a prime field, say $\BBQ$ or
$\BBF_p$, then the exponent $n$ in Lemmas \ref{lemma1} and \ref{lemma2}
can be chosen to be $n=1$ or $n\in p^\BBN$, respectively.
\end{remark}

Now we are in the position to prove our main deformation statement:

\begin{theorem} 
\label{Theorem1} 
Let $\CH\subseteq\CG$ be a 
subgroup scheme which is flat over
$S$. Assume that for some geometric point $x$ of $S$,
the geometric fiber $\CH_x$ is a spherical subgroup of $\CG_x$. 
Then all geometric fibers
of $\CH$ are spherical.
\end{theorem}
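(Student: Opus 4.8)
The plan is to translate ``sphericality'' into the non-vanishing of a $\CB$-semi-invariant on a suitable affine scheme, and then use the extension result Lemma~\ref{lemma2} together with the semicontinuity of rank to propagate the property from one fiber to all of them. Concretely, I would first form the quotient scheme $\CG/\CH$ over $S$ (which exists as a scheme by the results of Anantharaman cited in the footnote, $\CH$ being flat of finite type over the Dedekind base) and consider the diagonal $\CB$-action on the affine $\CG$-scheme $\CX := (\CG/\CH)\times_S(\CG\mod\CU)$, or more directly work with $\CB$-semi-invariant sections of line bundles on $\CG/\CH$. The key observation is that $\CH_s$ is spherical in $\CG_s$ precisely when $\CB_s$ has a dense orbit on $(\CG/\CH)_s$, and by a standard argument (Rosenlicht, or the theory of the moment map in char.\ $p$ as in \cite{Knop3}) this is equivalent to the existence of a $\CB_s$-semi-invariant rational function on $(\CG/\CH)_s$ that is not constant, equivalently to the statement that the ring of $\CU_s$-invariant rational functions on $(\CG/\CH)_s$ has transcendence degree equal to $\dim\CT_s$ minus the generic dimension of the $\CT_s$-stabilizer; one packages this numerically as: $\CH_s$ is spherical $\iff$ $\dim\CG_s - \dim\CH_s - \dim\CB_s + (\text{generic dimension of a }\CB_s\text{-stabilizer on }\CG/\CH)$ behaves correctly, but the cleanest invariant to track is the dimension of a generic $\CB_s$-orbit on $(\CG/\CH)_s$.

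The heart of the argument is then a semicontinuity statement. Since $\CH\to S$ is flat, all fibers $(\CG/\CH)_s$ have the same dimension $d := \dim\CG - \dim\CH$ (the relative dimension), and $\dim\CB_s$ is likewise constant in $s$, say $b$. Sphericality of $\CH_x$ means that the maximal $\CB_x$-orbit dimension on $(\CG/\CH)_x$ equals $d$. Now consider the ``action morphism'' $a:\CB\times_S(\CG/\CH)\to(\CG/\CH)\times_S(\CG/\CH)$, $(g,y)\mapsto(gy,y)$; the generic fiber dimension of $a$ restricted to a fiber over $s\in S$ equals $b$ minus the maximal $\CB_s$-orbit dimension. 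By upper semicontinuity of fiber dimension for the morphism $a$ (applied to the second projection composed appropriately), the locus in $S$ where the generic $\CB_s$-orbit on $(\CG/\CH)_s$ has dimension $<d$ is closed; equivalently the spherical locus is open in $S$. Since $S=\Spec A$ with $A$ a Dedekind domain is irreducible and one-dimensional, an open subset containing at least one point either is all of $S$ or omits only finitely many closed points. To upgrade to \emph{all} fibers we must rule out that some special closed fiber fails to be spherical — and this is exactly where Lemma~\ref{lemma2} enters.

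So the decisive step: suppose $\CH_x$ is spherical. Pick a nonconstant $\CB_x$-semi-invariant rational function $f$ on $(\CG/\CH)_x$ for some character $\chi$; after clearing denominators (using that $(\CG/\CH)_x$ is a variety and semi-invariant), realize $f$ as a genuine $\CB_x$-semi-invariant regular function on a $\CB_x$-stable affine open, and then — via the basic-affine-space trick already used in the proof of Lemma~\ref{lemma2} — as a $\CG_x$-invariant regular function $F$ on $\CX_x := (\CG/\CH)_x\times(\CG\mod\CU)_x$ which is a $\CT_x$-eigenvector of nonzero weight. Now I regard $\CX_x$ as the closed fiber of the affine $\CG$-scheme $\CX := (\overline{\CG/\CH})\times_S(\CG\mod\CU)$ over $S$ (taking an affine $\CG$-embedding of $\CG/\CH$ so that Lemma~\ref{lemma1}/\ref{lemma2} apply), or more simply apply Lemma~\ref{lemma2} directly with $\CY=\CX_x$ the closed subscheme cut out by a uniformizer. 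Lemma~\ref{lemma2} then furnishes, for some $n\ge1$, a $\CG$-invariant ($\CT$-eigen) function $\overline{F}$ on $\CX$ with $\overline{F}|_{\CX_x}=F^n$. Restricting $\overline{F}$ to any other geometric fiber $\CX_s$ gives a $\CG_s$-invariant $\CT_s$-eigenfunction of weight $n\chi\ne0$; translating back through the basic-affine-space isomorphism, this is a nonconstant $\CB_s$-semi-invariant rational function on $(\CG/\CH)_s$, whence $\CB_s$ does not act with dense orbit complement... precisely, a nonconstant $\CB_s$-semi-invariant forces the generic $\CB_s$-orbit to have dimension $\ge d$ once combined with the semicontinuity bound $\le d$ from flatness — so $\CH_s$ is spherical. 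Running this for every geometric point $s$ proves all fibers are spherical.

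The main obstacle I anticipate is the bookkeeping around extending $f$ from a rational to a regular semi-invariant and choosing the affine $\CG$-model of $\CG/\CH$ over $S$ compatibly across fibers: one needs the closure-of-subgroup-scheme and coset-scheme constructions (the references in the footnote) to behave well, and one must ensure the section $F$ really is the restriction of a section over an affine $\CG$-scheme flat over $S$ so that Lemma~\ref{lemma2} is applicable with $\CY$ the full closed fiber. A secondary subtlety is verifying the numerical equivalence ``nonconstant $\CB_s$-semi-invariant rational function $\Longrightarrow$ dense $\CB_s$-orbit'' in arbitrary characteristic; here one invokes that a single nonconstant invariant already cuts the generic orbit dimension down by one from the ambient $b$, and then the flatness bound $d_{\mathrm{orbit}}\le d$ pins it down — but the truly clean route is to note sphericality is equivalent to $\dim\CB_s$-orbit $=d$ and to observe the semi-invariant produced has weight lattice of full expected rank, forcing equality; I would spell this out using \cite[Cor.~2.6]{Knop3} and a rank count rather than by hand.
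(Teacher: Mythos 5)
Your outer scaffolding matches the paper's: form $\CG/\CH$ over $S$ via Anantharaman, use semicontinuity of the generic $\CB$-orbit dimension to see that the spherical locus in $S$ is open (hence, one spherical fiber forces the generic fiber to be spherical), and then invoke Lemma~\ref{lemma2} to deal with the finitely many remaining closed points. But the decisive step is carried out backwards, and with the wrong class of functions, and this is a genuine gap rather than bookkeeping. By Rosenlicht, a $\CG_s$-homogeneous (or irreducible $\CG_s$-) variety is spherical if and only if every $\CB_s$-\emph{invariant} rational function on it is constant, i.e.\ the character $\chi=0$. The existence of a nonconstant $\CB_s$-\emph{semi-invariant} of weight $\chi\ne0$ is neither necessary for sphericality (a rank-zero spherical variety has none) nor sufficient (non-spherical varieties such as $G/T$ carry plenty of them). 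So your implication ``nonconstant $\CB_s$-semi-invariant $\Longrightarrow$ dense $\CB_s$-orbit'' is false, and the attempted rescue — that one such function ``cuts the generic orbit dimension down by one from $b$'' and flatness then pins it at $d$ — does not hold: a single semi-invariant of nonzero weight gives no lower bound at all on orbit dimensions. The correct use of Lemma~\ref{lemma2} runs in the opposite direction: \emph{assume} some component $Y$ of a special fiber $\CX_y$ is \emph{not} spherical, take from Rosenlicht a nonconstant $\CB_y$-invariant rational function $f=f_1/f_2$ with $f_1,f_2$ regular semi-invariants \emph{for the same character} $\chi$, extend $f_1^n,f_2^n$ over $\CX$ by Lemma~\ref{lemma2}, and observe that $\fbar_1/\fbar_2$ is a nonconstant $\CB$-invariant rational function on the integral scheme $\CX$, contradicting sphericality of the generic fiber (which you have already secured by semicontinuity). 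Your version extends a semi-invariant \emph{from} the known spherical fiber \emph{to} the others, which proves nothing about them.

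A second, smaller gap: you posit ``an affine $\CG$-embedding of $\CG/\CH$'' so that Lemmas~\ref{lemma1} and \ref{lemma2} apply with $\CY$ a full closed fiber, but $\CG/\CH$ need not be quasi-affine (that would require $\CH$ to be observable, which is not given here). The paper gets around this by first replacing $\CH$ with its closure, using Sumihiro--Thomason to embed $\CG/\CH$ equivariantly into $\BBP_S(\CV)$, taking the projective closure, and then passing to the affine cone, at the cost of enlarging the group to $\CGtilde=\CG\times_S\Gm$; sphericality is preserved under this cone construction. Without some such device your appeal to Lemma~\ref{lemma2} has no affine $\CG$-scheme to act on.
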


\begin{proof} 
Since $S$ is the spectrum of a Dedekind ring,
the closure $\overline\CH$
of $\CH$ in $\CG$ is a flat closed subgroup scheme, as well
cf.\ \cite [1.2.6, 1.2.7, 2.1.6, 2.2.2]{bruhattits}. Moreover, $\CH_x$ is
spherical in $\CG_x$ if and only if $\overline\CH_x$ is (since the
former is open, hence of finite index in the latter). Thus, after
replacing $\CH$ by $\overline\CH$, we may assume that $\CH$ is closed
in $\CG$.

In that case, it is known that the homogeneous space $\CX':=\CG/\CH$
exists as a scheme, which is flat and of finite type over $S$ (see
\cite{Anan}).  Moreover, by Sumihiro (\cite{Sum}, see also
\cite{Thom}), this scheme is equivariantly quasiprojective over
$S$. This means that there is a $\CG$-vector bundle $\CV$ over $S$
and an equivariant embedding of $\CX'$ in the projective space
$\BBP_S(\CV)$. Let $\CX''\subseteq\BBP_S(\CV)$ be the closure of
$\CX'$. This is a scheme which is projective and flat over
$S$. Moreover, each geometric fiber $\CX'_x=\CG_x/\CH_x$ is an open
subset of the fiber $\CX''_x$.

Now let $\CX\subseteq \A_S(\CV):=\Spec S^\bullet\CV$ be the affine cone
of $\CX''$. The affine scheme $\CX$ affords an action of
$\CGtilde:=\CG\times_S(\Gm)_S$. Moreover, an irreducible
subvariety of $\CX''_x$ is spherical as a $\CG_x$-variety if and only
if its affine cone in $\CX_x$ is a spherical
$\CGtilde_x$-variety. Thus, by replacing $\CG$ by $\CGtilde$ we may assume that 
$\CX'=\CG/\CH$ is an open dense subscheme of an affine scheme $\CX$.

Suppose that $\CX_x$ has a spherical irreducible component.
Let $y\in S$ be a second geometric point. We have to prove that every component
of $\CX_y$ is spherical as well. Let $\eta$ be the generic geometric point of
$S$. 
First, we are going to show that $\CX_\eta$ and, subsequently, that $\CX_y$
is spherical. This amounts to the same as assuming that either $y=\eta$
or $x=\eta$.

Assume first that $y=\eta$. Let $Y\subseteq\CX_x$ be a spherical
irreducible component. This means that somewhere on $Y$  
the dimension of the isotropy
subgroup of $\CB$ is as small as possible, namely
$\dim \CB_x-\dim \CX_x=\dim \CB-\dim \CX$. By semi-continuity, this
holds on a non-empty open subset $\CX^0$ of $\CX$. Since then
$\CX^0\cap\CX_\eta\ne\varnothing$, we conclude that $\CX_\eta$ is spherical
(observe for this that $\CX_\eta$ is irreducible since it contains an open dense $\CG_\eta$-orbit).

Finally, let $x=\eta$ and suppose that some component $Y$ of $\CX_y$
is not spherical. Then, by Rosenlicht \cite{Rosenlicht}, 
$Y$ admits a non-constant
rational $\CB_y$-invariant function $f$. Because $Y$ is affine, this
function can be written as $f=f_1/f_2$, where $f_1,f_2\in\CO(Y)$ are
$\CB_y$-semi-invariants for the same character $\chi\in\Xi$. By
Lemma \ref{lemma2}, 
there is an $n\in\BBN$ such that $f_1^n$ and $f_2^n$ extend to
$\CB$-semi-invariants $\fbar_1,\fbar_2$ for the same character $n\chi$ on all of $\CX$. Now,
since $\CX$ is integral, we obtain a $\CB$-invariant rational function
$\fbar=\fbar_1/\fbar_2$ on $\CX$ which is not a constant, i.e., a pull-back
of a function on $S$. Thus, in particular, the generic fiber $\CX_x$
is not spherical contrary to our assumption.
\end{proof}

Applying Theorem \ref{Theorem1} to $S=\Spec \BBZ$, we get the following result,
which has been previously obtained by Brundan \cite[Thm.\ 4.3]{Br}, 
using a representation-theoretic approach and 
partially based on case-by-case considerations.

\begin{corollary}
\label{cor:lift}
Let $H_\BBR\subseteq G_\BBR$ be a pair of compact Lie groups in
Krämer's list, i.e., with $H_\BBR$ spherical in $G_\BBR$. Then the complexification $H_\BBC\subseteq G_\BBC$ has
a $\BBZ$-form $H_\BBZ\subseteq G_\BBZ$. Moreover, for any field
$k$ the induced pair $H_k\subseteq G_k$ is spherical.
\end{corollary}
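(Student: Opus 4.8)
The plan is to deduce Corollary~\ref{cor:lift} from Theorem~\ref{Theorem1} applied to the base scheme $S=\Spec\BBZ$, which is indeed the spectrum of a Dedekind domain. First I would produce a $\BBZ$-form of the pair $H_\BBC\subseteq G_\BBC$. Since $G_\BBC$ is a complex reductive group, it admits a split Chevalley $\BBZ$-form $G_\BBZ$, i.e.\ a split reductive group scheme $\CG\to S$ with $\CG_\BBC\cong G_\BBC$; this is precisely the setting of split reductive group schemes over $S$ recalled before Lemma~\ref{lemma1}. The subgroups $H_\BBR\subseteq G_\BBR$ occurring in Kr\"amer's list are all of the specific reductive types appearing in Tables~\ref{table:sp-classical} and~\ref{table:sp-exceptional}, and in each case $H_\BBC$ is the image of an explicit representation or the fixed locus of an explicit involution defined by matrices with integer entries; hence one obtains a closed subgroup scheme $\CH\subseteq\CG$ over $\BBZ$ with $\CH_\BBC\cong H_\BBC$. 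The one technical point to be careful about is flatness: a priori $\CH$ need only be flat over the generic point. But because $S=\Spec\BBZ$ is Dedekind and $\CH$ is by construction a \emph{closed} subscheme of $\CG$ that is dominant over $S$, it is automatically flat over $S$ (a closed subscheme of a scheme over a Dedekind ring that dominates the base, i.e.\ is torsion-free as an $\CO_S$-module, is flat) --- exactly the kind of fact invoked in the first paragraph of the proof of Theorem~\ref{Theorem1}. Alternatively, one may simply pass to the scheme-theoretic closure of $\CH_\BBQ$ in $\CG$, which is flat by \cite[1.2.6, 1.2.7]{bruhattits} and has the correct fibers in characteristic $0$.

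Having arranged $\CH\subseteq\CG$ flat and closed over $S=\Spec\BBZ$, the argument concludes as follows. The hypothesis that $H_\BBR$ is spherical in $G_\BBR$ says exactly that the compact group $H_\BBR$ has a dense orbit on the (real) flag manifold; equivalently, after complexification, $H_\BBC$ has a dense orbit on $G_\BBC/B_\BBC$, i.e.\ $\CH_\BBC$ is a spherical subgroup of $\CG_\BBC$. Thus the geometric fiber of $\CH$ over the characteristic-$0$ point of $S$ is spherical, so Theorem~\ref{Theorem1} applies and yields that \emph{all} geometric fibers of $\CH$ are spherical. In particular, for every prime $p$ the geometric fiber $\CH_{\overline{\BBF_p}}\subseteq\CG_{\overline{\BBF_p}}$ is spherical. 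Finally, for an arbitrary field $k$ the base change $H_k\subseteq G_k$ has the same sphericality as its geometric fiber $H_{\bar k}\subseteq G_{\bar k}$, since having a dense orbit on the flag variety is insensitive to passing to the algebraic closure; and $\bar k$ has the same characteristic as $k$, so $H_{\bar k}$ is one of the geometric fibers just shown to be spherical. This gives the statement for all $k$.

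The main obstacle, such as it is, is not conceptual but bookkeeping: one must know that each pair in Kr\"amer's list really does descend to a \emph{flat} closed subgroup scheme over $\BBZ$ with the expected complex fiber. For the subgroups given by integral matrix representations (the $\SO$, $\Sp$, $\GL$, $\Spin$, $\G_2$, $\F_4$ embeddings, etc.) this is transparent, and flatness over $\BBZ$ follows from torsion-freeness as noted above. For the symmetric-subgroup cases one takes the fixed-point subgroup scheme of an involution of $\CG$ defined over $\BBZ$; here one should either cite that the fixed-point subscheme of a finite group acting on a smooth affine $\BBZ$-scheme is flat in the relevant situations, or --- more cleanly and uniformly --- bypass the issue entirely by defining $\CH$ as the scheme-theoretic closure in $\CG$ of the characteristic-$0$ subgroup, which is flat over $S$ by \cite{bruhattits} with no case analysis. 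Either way, once $\CH$ is in hand the result is an immediate application of Theorem~\ref{Theorem1}, which is the whole point of having proved that theorem in the generality stated.
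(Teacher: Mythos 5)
Your proposal is correct and follows essentially the same route as the paper, whose proof is exactly this: the $\BBZ$-form exists by inspection of Kr\"amer's list, and sphericality in all characteristics then follows from Theorem~\ref{Theorem1} applied with $S=\Spec\BBZ$. Your extra care about flatness (torsion-freeness over a Dedekind base, or passing to the scheme-theoretic closure via \cite{bruhattits}) is exactly the device already built into the first paragraph of the proof of Theorem~\ref{Theorem1}, so it adds detail rather than a new idea.
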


\begin{proof}
The first statement follows by inspection of Krämer's list. 
The second follows from the first together with Theorem \ref{Theorem1} 
for $S=\Spec \BBZ$.
\end{proof}

In the reverse 
direction, we recover a classification of Duckworth \cite[Thm.\ 2]{Duck}
which can be formulated as follows.

\begin{corollary}
Assume that $p\ne2$ if $G$ is of type $\B_n$, $\C_n$, or $\F_4$ 
and that $p\ne3$ if $G$ is of type $\G_2$.
Then the classification of pairs $(G, H)$,  
where $G$ is a simple group
and $H$ is a spherical subgroup of $G$ with $\rk H=\rk G$, 
is independent of $p$.
\end{corollary}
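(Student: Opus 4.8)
The plan is to deduce this from Theorem~\ref{Theorem1} applied to $S = \Spec \BBZ[1/N]$ for a suitable integer $N$, combined with the fact that full-rank spherical subgroups can be deformed along their ``$\BBZ$-form'' once the bad primes are inverted. Concretely, given a simple group $G$ of one of the allowed types and a spherical subgroup $H$ with $\rk H = \rk G$, I would argue that $H$ arises from a closed flat subgroup scheme $\CH \subseteq \CG$ over $\Spec \BBZ[1/N]$, where $\CG$ is a split reductive group scheme of the same type and $N$ collects the characteristic-$2$ (resp.\ characteristic-$3$) primes excluded in the hypothesis together with any torsion primes forced by the embedding. Since all residue fields of $\Spec \BBZ[1/N]$ of positive characteristic occur, and the generic fiber has characteristic $0$, Theorem~\ref{Theorem1} then forces the sphericality of the fiber at one characteristic to be equivalent to that at any other, and in particular to the characteristic-$0$ case governed by Kr\"amer's list. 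Running this over all pairs $(G,H)$ with $\rk H = \rk G$ shows the classification does not depend on $p$ in the stated range.

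The first step is to understand which subgroups are in play: a full-rank reductive subgroup $H$ of $G$ contains a maximal torus of $G$, hence is (up to conjugacy) generated by that torus together with a subsystem of roots, i.e., $H$ is a (possibly disconnected, but here we take the connected) subsystem subgroup, or an endoscopic-type group coming from a subset of simple roots of an extended Dynkin diagram (Borel--de Siebenthal). Such subgroups are visibly defined over $\BBZ$ after inverting the relevant torsion primes: the Chevalley construction gives a split form $\CH_0 \subseteq \CG$ over $\Spec \BBZ$, and one only has to worry about whether the embedding respects the integral structure, which it does away from the primes dividing the index of the coroot lattice of $H$ in that of $G$ and the primes where the natural representations fail to be completely reducible. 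For the excluded types this forces exactly $p = 2$ (types $\B,\C,\F_4$) and $p = 3$ (type $\G_2$) to be thrown out, matching the hypothesis; for type $\A$, $\D$, $\E_6$, $\E_7$, $\E_8$ and all good primes no further exclusion is needed.

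The second, and main, step is the deformation argument itself. Fixing $G$ and such an $H$, take $S = \Spec \BBZ[1/N]$ with $N$ as above, let $\CG \to S$ be the split Chevalley group scheme of the type of $G$, and let $\CH \subseteq \CG$ be the closed subgroup scheme extending $H_0$; it is flat over $S$ because $S$ is Dedekind and $\CH$ is the schematic closure of its generic fiber. If $H_\BBC$ is spherical in $G_\BBC$ (i.e.\ $(G,H)$ is on Kr\"amer's list), then the generic geometric fiber is spherical, so by Theorem~\ref{Theorem1} \emph{every} geometric fiber is spherical, giving a spherical subgroup in characteristic $p$ for every $p \nmid N$. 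Conversely, if $H_k$ is spherical for some field $k$ of characteristic $p \nmid N$, then the geometric fiber over that point is spherical, so again by Theorem~\ref{Theorem1} the generic fiber is, whence $H_\BBC$ is spherical and $(G,H)$ lies on Kr\"amer's list. Thus the set of full-rank spherical pairs $(G,H)$ is the same for every $p$ in the allowed range (namely Kr\"amer's list).

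I expect the main obstacle to be the bookkeeping in the first step: verifying that for \emph{every} candidate full-rank reductive subgroup $H$ the schematic closure $\CH$ over $\Spec \BBZ[1/N]$ really is smooth (or at least flat with the ``right'' fibers, i.e.\ $\CH_k$ is the connected reductive group one expects rather than some thickened or degenerate scheme) precisely when $p \notin \{2,3\}$ is excluded for the bad types. This is where the hypothesis on $p$ enters essentially --- for instance, the issue that $\SO(2n+1)$ and $\Sp(2n)$ become isogenous in characteristic $2$, or that $\G_2 \subset \SO(7)$ behaves differently in characteristic $2$, or the short-root subgroups $\widetilde\A_1, \widetilde\A_2$ in $\F_4, \G_2$ in characteristic $2,3$ --- has to be shown to be confined exactly to the excluded primes. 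One can either invoke the general smoothness results for reductive subgroup schemes away from torsion primes, or handle the finite list of full-rank cases directly by comparing root data; the latter is routine but tedious, and it is the only genuinely case-based part of the argument. Everything after that is a formal consequence of Theorem~\ref{Theorem1} and Kr\"amer's classification.
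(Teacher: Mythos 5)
Your argument is essentially the paper's: under the stated restrictions on $p$, a full-rank reductive subgroup corresponds to an additively closed subroot system (Borel--de Siebenthal), hence lifts to a flat subgroup scheme over $\Spec\BBZ$, and Theorem~\ref{Theorem1} transfers sphericality between the characteristic-$0$ and characteristic-$p$ fibers in both directions. The only differences are presentational --- the paper works directly over $\Spec\BBZ$ (a subsystem subgroup needs no primes inverted to be defined and smooth there; the hypothesis on $p$ is used only to guarantee that every full-rank subgroup in characteristic $p$ is of this form), whereas you pass to $\Spec\BBZ[1/N]$ and spend more effort on that bookkeeping.
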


\begin{proof}
Under the given restrictions, 
$H$ corresponds to an additively closed
subroot system. Therefore, it lifts to characteristic
$0$.  Then apply Theorem \ref{Theorem1} for $S=\Spec \BBZ$.
\end{proof}

\section{Special Cases of Spherical Subgroups}
\label{sec:special}

For an arbitrary $G$-variety $X$ let $\Xi(X)$ be the group of
characters of $B$-semi-invariant rational functions on $X$. We denote
the rank of $X$ as the $\BBZ$-rank of $\Xi(X)$.  Let $S_0$ be the set
of simple roots $\alpha$ such that the coroot $\alpha^\vee$ is
orthogonal to $\Xi(X)$. Then attached to $S_0$ is a parabolic subgroup
$P = P(X)$ of $G$ such that $\Xi(X)\subseteq \Xi(P)$, where
$\Xi(P)$ is the character group of $P$.  We define the
subgroup $P_0$ of $P$ by $P_0 = \{y \in P \mid \chi(y)=1\text{ for all }
\chi\in\Xi(X)\}$.

\begin{theorem}
\label{thm:genstab}
Let $X$ be a quasiaffine $G$-variety. Let $P = P(X)$ as above. Then
there is a $P$-invariant dense open subset $X_0$ of $X$ such that
$C_P(x) R_u(P) = P_0$ and $C_P(x)\cap R_u(P)$ is finite for all $x\in
X_0$. In particular, $C_P(x)$ is a reductive group which is isogenous
to a Levi subgroup of $P_0$.
\end{theorem}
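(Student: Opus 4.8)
The plan is to work with the basic affine space $G\mod U = \Spec\left(\bigoplus_{\chi}H^0(\chi)\right)$, exactly the object introduced in the proof of Lemma \ref{lemma2}, and to translate statements about $B$-semi-invariants on $X$ into statements about $G$-invariants on $X\times (G\mod U)$. Since $X$ is quasiaffine, embed it $G$-equivariantly as a dense open subset of an affine $G$-scheme $\overline X$; every $B$-semi-invariant rational function on $X$ is a ratio of two global $B$-semi-invariant regular functions on $\overline X$ of the same weight, and $\COinv$ of the $B$-action is encoded by the $\CT$-grading on $\CO(\overline X\times G\mod U)^G$. First I would fix a point $x$ in general position, meaning: the $B$-orbit closure of $x$ meets the open subset of $X$ where the rank of the $B$-action (i.e., $\dim X - \max_x \dim B\cdot x$, equivalently the transcendence degree of the field of $B$-invariant rational functions) is achieved, and simultaneously where $\dim C_B(x)$ is minimal by semicontinuity; intersecting finitely many such $B$-stable dense open subsets gives the desired $X_0$.

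The key structural input is the standard fact (going back to Knop's work on the Luna--Vust theory, valid in arbitrary characteristic by \cite{Knop3}) that for $x\in X_0$ the isotropy group $C_P(x)$ contains $P_0 \cap R_u(P)$-complement data in the following precise sense: the subgroup $P = P(X)$ attached to $S_0$ is characterized by the property that $\Xi(X)\subseteq\Xi(P)$ and that $P$ is the largest parabolic with this property, and one shows directly that $C_B(x)$ projects onto a finite-index subgroup of the torus quotient $P/P_0$ cut out by the lattice $\Xi(X)$. Concretely I would argue: since the generic $B$-orbit on $X$ has dimension $\dim B - \dim(\text{generic stabilizer})$ and the rank of $X$ equals $\dim(B/[B,B]) - (\text{something measured by }S_0)$, a dimension count forces $C_B(x)\cdot R_u(P)$ to be all of $P_0$ (one inclusion is the definition of $P_0$ via $\Xi(X)$, the other is the dimension estimate), and forces $C_B(x)\cap R_u(P)$ to be finite. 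The step $C_P(x) = C_B(x)$ for $x$ generic — i.e.\ that the stabilizer in the full parabolic already lies in the Borel — follows because $C_P(x)$ normalizes the generic $B$-orbit structure and the local model near a generic point exhibits the $P$-action as an affine bundle over the $B$-orbit.

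From $C_P(x)R_u(P) = P_0$ and $C_P(x)\cap R_u(P)$ finite, the last sentence is essentially formal: $C_P(x)/(C_P(x)\cap R_u(P))$ maps isomorphically, up to the finite kernel, onto $P_0/R_u(P)$, which is a Levi subgroup of $P_0$ (note $R_u(P)\subseteq R_u(P_0)$ since $P_0$ has finite index in $P$, so $R_u(P_0)=R_u(P)$), hence reductive; therefore $C_P(x)$ has reductive identity component isogenous to that Levi. I expect the main obstacle to be the genericity/semicontinuity bookkeeping in positive characteristic: one must ensure that ``stabilizer dimension is generically minimal'' and ``$C_P(x)=C_B(x)$ generically'' hold on a common dense open $P$-stable subset without invoking characteristic-zero tools like complete reducibility of the $C_P(x)$-action, and the cleanest route is to reduce everything to the affine cone picture of Theorem \ref{Theorem1} and the $G\mod U$ trick of Lemma \ref{lemma2}, where the relevant invariant-theoretic statements are available integrally. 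A secondary subtlety is verifying that $C_P(x)\cap R_u(P)$ is genuinely finite rather than merely of positive codimension — this uses that a nontrivial unipotent subgroup in the stabilizer would lower the dimension of the generic $P$-orbit below $\dim P - \dim P_0 = \dim X$ (using that $X$ has an open $P$-orbit modulo the $\Xi(X)$-torus, which is the defining property of $P(X)$), contradicting genericity.
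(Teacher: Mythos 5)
Your proposal has a genuine gap, and in fact its central reduction is false. You claim that for generic $x$ one has $C_P(x)=C_B(x)$, and that ``a dimension count forces $C_B(x)\cdot R_u(P)$ to be all of $P_0$.'' But $C_B(x)$ is a subgroup of $B$, hence solvable, whereas the theorem asserts that $C_P(x)$ is isogenous to a Levi subgroup of $P_0$; that Levi contains the semisimple group $[L,L]$ generated by the root subgroups for $S_0$, so $C_P(x)$ is non-solvable, and $C_P(x)\ne C_B(x)$, whenever $S_0\ne\varnothing$. This is exactly the regime in which the theorem is used later (e.g.\ in Lemma \ref{lemma:SOSP2}, where $C_B(x)=C_P(x)\cap B$ only contains a \emph{Borel} subgroup of $\SO(n-2m)$, not the whole group). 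Relatedly, your assertion that $C_B(x)$ projects onto a finite-index subgroup of $P/P_0$ is backwards: since a $B$-semi-invariant of weight $\chi$ is nonzero at a generic $x$, any $b\in C_B(x)$ satisfies $\chi(b)=1$ for all $\chi\in\Xi(X)$, so $C_B(x)\subseteq P_0$ and its image in $P/P_0$ is trivial. Your final identity $\dim P-\dim P_0=\dim X$ is also wrong in general (the left side is $\rk\Xi(X)$, and $X$ need not even have a dense $P$- or $G$-orbit; the theorem is stated for arbitrary quasiaffine $G$-varieties).

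What is actually missing is the local structure theorem, which is the real content here and which the paper simply cites: by \cite[Satz 2.10]{Knop} there is a parabolic $P$ and a $P$-stable dense open $X_0$ on which $R_u(P)$ acts \emph{properly} (whence $C_P(x)\cap R_u(P)$ is finite), the quotient $Y=X_0/R_u(P)$ exists, and $P$ acts on $Y$ through a \emph{free} action of the torus $P/P_1$; these three facts immediately give $C_P(x)R_u(P)=C_P(\pi(x))=P_1$. The only work left is to identify $P$ with $P(X)$ and $P_1$ with $P_0$, which the paper does by realizing $X_0$ as the nonvanishing locus of a highest-weight $B$-semi-invariant $\sigma\in\CO(X)$ and computing the stabilizer of the line $k\sigma$. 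Your $G\mod U$ translation and semicontinuity bookkeeping do not produce the properness or the freeness statements, and the dimension counts you substitute for them are incorrect as noted; so the argument does not go through as written.
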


\begin{proof}
According to \cite[Satz 2.10]{Knop}, there is a parabolic subgroup
$P$ of $G$ and a $P$-stable dense open subset $X_0\subseteq X$
such that:
\begin{itemize}
\item[(i)] The action of $R_u(P)$ on $X_0$ is proper.
\item[(ii)]The orbit space $Y:=X_0/R_u(P)$ exists.
\item[(iii)] Let $P_1$ be the kernel of the $P$-action on $Y$. 
Then $P/P_1$ is a torus.
\item[(vi)] The action of $P/P_1$ on $Y$ is free.
\end{itemize}
Let $\pi:X_0\to Y$ be the quotient map and $x\in X_0$. Then $C_P(x)\cap
R_u(P)$ is finite by 1. We have $C_P(x)R_u(P)\subseteq C_P(y)$ with
$y=\pi(x)$. Moreover, for $z\in C_P(y)$ there is $u\in R_u(P)$ with
$zx=ux$. Thus $C_P(x)R_u(P)=C_P(y)$. Finally, $C_P(y)=P_0$ by (iv).

It remains to show that $P = P(X)$ and $P_1 = P_0$, as defined
above. For this we use the fact that $X_0$ is constructed as the
non-vanishing set of a $B$-semi-invariant section $\sigma$ of a
sufficiently high power $\CL^n$ of any ample line bundle on $X$. Since
$X$ is quasiaffine, we can take $\CL=\CO_X$. Moreover, $P$ is the
stabilizer of the line $k\sigma$. Since $\sigma$ is a regular function
on $X$, the $G$-module $M:=\langle G\sigma\rangle_k $ generated by $\sigma$ is
finite-dimensional and $\sigma$ is a highest weight vector in $M$ with
weight denoted by $\chi$. This implies that $P$ is the parabolic subgroup
attached to the set $S_1$ of simple roots $\alpha$ with
$\langle \chi,\alpha^\vee\rangle =0$. From the construction it is easy to see that
$\chi$ can be chosen such that $S_1=S_0$. This shows that indeed  $P = P(X)$.
Finally, observe that
$\Xi(X)=\Xi(Y)$. Thus properties (iii) and (iv) ensure that $P_1$ is the common
kernel of all $\chi\in\Xi(X)$, i.e.\ $P_1 = P_0$.
\end{proof}

\begin{lemma}
\label{lemma:SOSP}
Let $X=\SO(n)/\SO(n-m)$ or $X=\Sp(2n)/\Sp(2n-2m)$ with $2m\le n$. 
Then $\Xi(X)\subseteq\langle \omega_1,\ldots,\omega_{2m}\rangle_\BBZ$.
\end{lemma}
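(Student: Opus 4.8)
The plan is to bound $\Xi(X)$ by exhibiting enough $B$-semi-invariant rational functions on $X$ — equivalently, enough $B$-eigenvectors in suitable $G$-modules — and to show that their weights already span a lattice containing $\Xi(X)$, while simultaneously proving that $\Xi(X)$ cannot escape the span of $\omega_1,\dots,\omega_{2m}$. The cleanest route is to realize $X=G/H$ (with $G=\SO(n)$, $H=\SO(n-m)$, or $G=\Sp(2n)$, $H=\Sp(2n-2m)$) inside a product of Grassmannian-type flag varieties. Concretely, fixing a chain of nondegenerate subspaces $V_1\subset V_2\subset\cdots\subset V_m$ of the natural module with $\dim V_i=i$ whose common orthogonal complement is the space $W$ with $H=\Sp(W)$ or $\SO(W)$, the coset space $X$ maps $G$-equivariantly to the variety of such partial isotropic-free flags; one checks this map is an open immersion onto a $G$-orbit (the stabilizer of the configuration is exactly $H$, possibly up to a finite group, which does not affect ranks). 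Thus $\Xi(X)$ equals $\Xi$ of this orbit in the partial flag variety.

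First I would record that for a $G$-orbit $\mathcal{O}$ inside a (partial) flag variety $G/Q$, every $B$-semi-invariant rational function is a ratio of restrictions of sections of line bundles $\mathcal{L}(\lambda)$ with $\lambda$ a dominant weight of $Q$, and therefore $\Xi(X)\subseteq\Xi(Q)$, the character lattice of the relevant parabolic. For the partial flag variety parametrizing a flag of subspaces of dimensions $1,2,\dots,m$ together with (in the symplectic or even-orthogonal case, but not the odd-orthogonal one — this is exactly where the case distinctions in the theorem's hypothesis $2m\le n$ enter) the data needed to pin down $W$, the corresponding parabolic $Q$ is the one whose Levi omits precisely the simple roots $\alpha_1,\dots,\alpha_m$ adjacent to the relevant nodes; its character lattice is spanned by $\omega_1,\dots,\omega_m$ in the purely-subspace cases. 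To reach $\omega_1,\dots,\omega_{2m}$ as stated I would instead use the chain of subspaces $V_1\subset\cdots\subset V_{2m}$ where $V_{m+j}=V_{m-j+1}^\perp$; the stabilizer of this self-dual flag is $P_0$-type and its character lattice is $\langle\omega_1,\dots,\omega_{2m}\rangle_\BBZ$ after accounting for the fact that in types $\B,\C,\D$ the weights $\omega_i$ for $i$ near the end of the diagram involve the spin/half-spin nodes, which do not occur here because $2m\le n$ keeps us safely in the "vector" range of the diagram. The containment $\Xi(X)\subseteq\Xi(Q)=\langle\omega_1,\dots,\omega_{2m}\rangle_\BBZ$ then follows.

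The main obstacle I anticipate is twofold. First, one must verify that the equivariant embedding of $G/H$ into the chosen partial flag variety is genuinely an open immersion onto a single $G$-orbit — i.e.\ that the pointwise stabilizer of the flag $(V_1,\dots,V_{2m})$ is exactly $H$ (up to finite index, which is harmless for the rank computation), and that $G$ acts transitively on the set of such nondegenerate flags with a fixed complement. This transitivity is where $2m\le n$ is essential: it guarantees there is enough room to extend any isometry of the flag to an isometry of the whole space, and it also ensures the ambient space $W$ is nonzero (or the degenerate boundary case $2m=n$ behaves correctly). Second, in characteristic $2$ with $G$ orthogonal one has to be careful that $\SO(n)$ here means the Dickson-invariant kernel and that "nondegenerate subspace" and "$\perp$" still behave as expected; by Remark~\ref{rem:isogeny}(ii) we may take $G$ strictly classical, so the natural module is completely reducible and the bilinear form is well-behaved, which sidesteps the worst pathologies. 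Once transitivity and the stabilizer identification are in hand, the inclusion $\Xi(X)=\Xi(G/H)\subseteq\Xi(Q)$ is formal from the flag-variety description, and reading off $\Xi(Q)=\langle\omega_1,\dots,\omega_{2m}\rangle_\BBZ$ from the Bourbaki labeling of $\B_n$, $\C_n$, $\D_n$ completes the proof.
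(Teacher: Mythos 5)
Your argument has a genuine gap at its central step, namely the claimed containment $\Xi(X)\subseteq\Xi(Q)$. Since $H$ is reductive, $G/H$ is not an orbit in a flag variety $G/Q$ of $G$ itself (parabolic subgroups of $\SO(n)$ and $\Sp(2n)$ stabilize \emph{isotropic} flags, not nondegenerate ones); what your construction produces is a $G$-orbit inside a product of Grassmannians $\prod_i\mathrm{Gr}(d_i,V)$, i.e.\ inside a flag variety of $\GL(V)$. Your justification conflates the weight of the line bundle $\CL(\lambda)$ (a character of the parabolic $Q\subset\GL(V)$) with the $B$-weight of a $B$-eigenvector in its space of sections restricted to the orbit closure; these agree only when $B\subseteq Q$, which is precisely what fails here. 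In fact the $B$-eigenvector weights in $H^0\bigl(\prod_i\mathrm{Gr}(d_i,V),\CL\bigr)$ are the highest weights of a tensor product of Schur modules of $V^*$, and as an $\SO$- or $\Sp$-module such a tensor product has constituents supported on $\omega_1,\dots,\omega_{\sum_i d_i}$; for your chain of dimensions $1,\dots,m$ that is $\omega_{m(m+1)/2}$, which already exceeds the target $\omega_{2m}$ for $m\ge3$. Two further problems: in the symplectic case nondegenerate subspaces are even-dimensional, and the stabilizer of a flag of such subspaces is $\Sp(2)^m\times\Sp(2n-2m)$, which contains $H=\Sp(2n-2m)$ with \emph{infinite} index (one must fix vectors, not subspaces, to cut the stabilizer down to $H$); and the "self-dual" chain $V_{m+j}=V_{m-j+1}^{\perp}$ is dimensionally inconsistent unless $n=2m+1$, and adjoining perps is $G$-equivariant so it changes neither the orbit nor its weight lattice.

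For comparison, the paper's proof is short and avoids all of this: $X$ lifts to characteristic zero by Corollary \ref{cor:lift} and $\Xi(X)$ is unchanged under the reduction, so one may assume $\Char k=0$; since $X$ is affine, every rational $B$-semi-invariant is a ratio of regular ones, a regular $B$-semi-invariant of weight $\chi$ is an $H$-fixed vector in $L(\chi)^{*}$, and the classical branching laws for $\SO(n-m)\subset\SO(n)$ and $\Sp(2n-2m)\subset\Sp(2n)$ show that any such $\chi$ is supported on $\omega_1,\dots,\omega_{2m}$. If you want to keep the geometric flavour of your approach, the correct substitute for the projective flag variety is the \emph{affine} frame variety: embed $X$ equivariantly into $V^{\oplus m}$ (orthonormal frames) or $V^{\oplus 2m}$ (symplectic frames); the highest weights of $k[V^{\oplus 2m}]$ have at most $2m$ nonzero parts, which gives the bound --- but this is the branching law in disguise.
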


\begin{proof}
Let $G =\SO(n)$ and $H = \SO(n-m)$ or
$G = \Sp(2n)$ and $H = \Sp(2n-2m)$ with $2m\le n$,
respectively. 
Write $X=G/H$. 
First observe that $X$ lifts to characteristic zero,
thanks to Corollary \ref{cor:lift}.
 Since the
character group $\Xi(X)$ is the same in
characteristic $0$ and in positive characteristic $p$ (after inversion of $p$), 
we may assume from the outset that $\Char k = 0$.

Since $X$ is affine, every rational $B$-semi-invariant
is the ratio of two regular ones. Moreover, a regular
$B$-semi-invariant with character $\chi$ corresponds to a non-zero
$H$-fixed vector in the dual irreducible $H$-module $L(\chi)^*$. Now it
follows readily from classical branching laws (e.g., see 
\cite[Ch.\ 8]{GW}) that $\chi$ is a linear combination of the first $2m$
fundamental weights.
\end{proof}

\begin{lemma}
\label{lemma:SOSP2}
\begin{itemize}
\item[(i)]
Let $H\subset\SO(m)$ be a proper, reductive subgroup of $\SO(m)$
such that $H\times\SO(n-m)$ is spherical in $\SO(n)$. Then $2m>n$.

\item[(ii)]
Let $H\subseteq\Sp(2m)$ be a reductive subgroup such that $H\times
\Sp(2n-2m)$ is spherical in $\Sp(2n)$. Assume that $2m\le n$. Then
$\dim H\ge\dim\SO(2m)=m(2m-1)$.
\end{itemize}
\end{lemma}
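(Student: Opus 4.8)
The two parts are cousins of Lemma~\ref{lemma:SOSP}: in each case $X = G/(H \times H')$ sits inside $G/(H_0 \times H')$ where $H_0$ is the full classical group (so $\SO(m)$ resp.\ $\Sp(2m)$), and the strategy is to push the character lattice computation of Lemma~\ref{lemma:SOSP} through to get a lower bound on $\dim C_B(x_0)$ via Theorem~\ref{thm:genstab}, then feed this into the refined dimension formula of Lemma~\ref{lemma3refined}. Concretely, for (ii): if $2m \le n$ and $H \times \Sp(2n-2m)$ is spherical in $\Sp(2n) = G$, then by Lemma~\ref{lem:subgroups} the larger subgroup $\Sp(2m) \times \Sp(2n-2m)$ is also spherical in $G$, and the spherical $G$-variety $X' := G/(\Sp(2m)\times\Sp(2n-2m)) = \Sp(2n)/\Sp(2n-2m)$ fibered over the point — actually I want to look at $Y := \Sp(2n)/\Sp(2n-2m) = X'$-with-extra-$\Sp(2m)$-action, i.e.\ the point is that $X = G/(H\times\Sp(2n-2m))$ maps onto $X' = \Sp(2n)/\Sp(2n-2m)$ with fiber $\Sp(2m)/H$, and $X$ is spherical. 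First I would apply Lemma~\ref{lemma:SOSP} to $X' = \Sp(2n)/\Sp(2n-2m)$ to learn $\Xi(X') \subseteq \langle\omega_1,\dots,\omega_{2m}\rangle_\BBZ$, hence the same holds for the subvariety-theoretic rank data we need.

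The key input is Theorem~\ref{thm:genstab} applied to the quasiaffine (in fact affine) $G$-variety $X = \Sp(2n)/(H\times\Sp(2n-2m))$: it produces the parabolic $P = P(X)$ determined by the set $S_0$ of simple roots whose coroot kills $\Xi(X)$, and a generic point $x$ with $C_P(x)$ reductive, isogenous to a Levi of $P_0$. Since $\Xi(X) \subseteq \langle\omega_1,\dots,\omega_{2m}\rangle_\BBZ$ by the branching-law argument of Lemma~\ref{lemma:SOSP} (transferring to $\Char 0$ via Corollary~\ref{cor:lift}, exactly as in that lemma's proof), the simple roots $\alpha_{2m+1},\dots,\alpha_n$ all lie in $S_0$, so $P$ contains the standard Levi of type $\C_{n-2m}$ (the tail of the $\C_n$ diagram). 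Now the point is to identify $C_B(x_0)$ for the open $B$-orbit: I would compare $C_P(x)$ for the generic point of $X$ (reductive, containing a group isogenous to a Levi of $P_0$, which here has semisimple rank at least $n - 2m$) against the stabilizer in the smaller group. The dimension of $C_B(x_0)$ in $X$ is at least the dimension coming from the unipotent radical of $P$ beyond $\dim R_u(B)$ forced by the $\C_{n-2m}$-Levi plus a toral contribution; carefully, $\dim C_B(x_0) \ge \dim C_P(x) \cap B$, and since $C_P(x)$ is reductive of semisimple rank $\ge n-2m$ one gets $\dim C_B(x_0) \ge \dim \mathrm{(Borel\ of\ }C_{n-2m}) = \dim\Sp(2n-4m)/(\text{its Borel})$-type bound. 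Then Lemma~\ref{lemma3refined} gives
\begin{equation*}
\dim(H\times\Sp(2n-2m)) = \dim G/B + \dim C_B(x_0) \ge \dim \Sp(2n)/B + \dim C_B(x_0),
\end{equation*}
and subtracting the known $\dim\Sp(2n-2m)$ yields the desired lower bound $\dim H \ge m(2m-1) = \dim\SO(2m)$ after arithmetic with $\dim\Sp(2k) = k(2k+1)$.

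For part (i), the same scheme runs with $\SO$ in place of $\Sp$: from $H \times \SO(n-m)$ spherical in $\SO(n)$ one gets $\SO(m)\times\SO(n-m)$ spherical in $\SO(n)$, consults Table~\ref{table:sp-classical} (the row $\SO(m)\times\SO(n)\subset\SO(m+n)$ with $m\ge n\ge1$ — note this row carries \emph{no} ``$p>0$ only'' restriction and genuinely requires the size inequality), and since $H$ is a \emph{proper} subgroup of $\SO(m)$, if $2m \le n$ then Lemma~\ref{lemma:SOSP} bounds $\Xi(X) \subseteq \langle\omega_1,\dots,\omega_{2m}\rangle_\BBZ$ and the argument of part (ii) forces $\dim H \ge \dim\SO(m)$; but $H$ proper reductive in $\SO(m)$ has $\dim H < \dim\SO(m)$ unless $H$ is e.g.\ $\SO(m)$ itself or a component group issue — more precisely $\dim H \le \dim\SO(m) - ?$, contradiction (one uses that a proper connected reductive subgroup of $\SO(m)$ has strictly smaller dimension, or handles the finitely many maximal-rank exceptions directly). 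Hence $2m > n$.

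**Main obstacle.** The delicate step is the second paragraph: turning the containment $\Xi(X)\subseteq\langle\omega_1,\dots,\omega_{2m}\rangle$ into the precise lower bound for $\dim C_B(x_0)$, i.e.\ correctly bookkeeping how much of $R_u(P)$ and of the Levi of $P_0$ the generic $B$-stabilizer captures, and checking the dimension arithmetic $\dim\Sp(2k) = k(2k+1)$, $\dim\SO(2m) = m(2m-1)$ closes up to exactly the claimed inequality rather than being off by a toral factor. I expect the cleanest route is not to track $C_B(x_0)$ directly but to apply Theorem~\ref{thm:genstab} and compare $\dim P_0$ with $\dim G/H$ via $\dim G - \dim H = \dim X = \dim P \cdot x = \dim P - \dim C_P(x)$ together with $C_P(x)R_u(P) = P_0$, so that $\dim H = \dim C_P(x) + (\dim G - \dim P) = \dim C_P(x) + \dim(G/P)$; since $C_P(x)$ is isogenous to a Levi of $P_0$ and $P \supseteq$ the type-$\C_{n-2m}$ parabolic, one reads off $\dim C_P(x) \ge \dim\Sp(2n-4m)$-plus-torus, and the inequality follows. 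The risk is that $P(X)$ could a priori be \emph{larger} than the type-$\{1,\dots,2m\}$ parabolic (if $\Xi(X)$ is a proper sublattice of $\langle\omega_1,\dots,\omega_{2m}\rangle$), which only \emph{helps} the bound, so this causes no real trouble — it is just a matter of writing the monotonicity argument carefully.
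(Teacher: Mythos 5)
Your overall strategy --- Lemma \ref{lemma:SOSP} to bound the weight lattice, Theorem \ref{thm:genstab} to produce a large reductive piece in the generic stabilizer, and Lemma \ref{lemma3refined} to convert this into a lower bound on $\dim H$ --- is exactly the paper's. For part (ii) your argument closes: the containment $\Xi(G/(H\times\Sp(2n-2m)))\subseteq\Xi(\Sp(2n)/\Sp(2n-2m))\subseteq\langle\omega_1,\dots,\omega_{2m}\rangle$ holds simply because an $(H\times\Sp(2n-2m))$-semi-invariant is in particular a $\Sp(2n-2m)$-semi-invariant (you should not try to lift $X$ itself to characteristic $0$, since the arbitrary reductive $H$ need not lift --- only $\Sp(2n)/\Sp(2n-2m)$ is lifted, inside Lemma \ref{lemma:SOSP}); the Levi of $P_0$ then contains $\Sp(2n-4m)$, and your ``cleanest route'' identity $\dim(H\times\Sp(2n-2m))=\dim P-\rk\Xi(X)\ge\dim P_{\{\alpha_{2m+1},\dots,\alpha_n\}}-2m=n^2+(n-2m)(n-2m+1)$ yields precisely $\dim H\ge m(2m-1)$, which is the paper's computation phrased via $\dim C_B(x_0)\ge\dim B_{\Sp(2n-4m)}$. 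Your monotonicity remark (a larger $P(X)$ only helps) is also correct.

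Part (i), however, has a genuine gap. You reduce it to ``the argument of part (ii)'' using only $\Xi\subseteq\langle\omega_1,\dots,\omega_{2m}\rangle$. But in type $\B/\D$ the simple roots $\alpha_{2m+1},\alpha_{2m+2},\dots$ generate a subsystem corresponding to $\SO(n-4m)$, not $\SO(n-2m)$, so the part-(ii) mechanism only gives $\dim C_B(x_0)\ge\dim B_{\SO(n-4m)}$ and hence $\dim H\ge\tfrac{7m^2-m}{2}-mn$. This exceeds $\dim\SO(m)=\tfrac{m(m-1)}{2}$ only when $n\le 3m$; for $3m<n$ you obtain no contradiction, so the conclusion $2m>n$ is not reached in that range. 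The paper's proof of (i) in fact uses that the generic $B$-stabilizer on $\SO(n)/\SO(n-m)$ contains a group isogenous to a Borel of $\SO(n-2m)$, and this requires the sharper orthogonal branching statement: the trivial $\SO(n-m)$-module occurs in $L(\lambda)$ only when $\lambda$ has at most $m$ nonzero parts (one interlacing step per rank drop, as opposed to the double interlacing that produces the $2m$ in the symplectic case), i.e.\ $\Xi(\SO(n)/\SO(n-m))\subseteq\langle\omega_1,\dots,\omega_m\rangle$. With that input your scheme goes through verbatim and reproduces the paper's bound $\dim H\ge\tfrac12 m(m-1)=\dim\SO(m)$, contradicting properness of $H$. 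So: same approach as the paper, part (ii) essentially complete, part (i) missing the sharper lattice bound needed to cover $n>3m$.
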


\begin{proof}
  (i) Suppose $\Htilde :=H\times\SO(n-m)$ is spherical in $\SO(n)$ and
  $2m\le n$.  Let $x_0\in G/\Htilde$ be in the open $B$-orbit in
  $G/\Htilde$. Then by \eqref{eq:2refined}, we have
  \[
  \dim \Htilde =\dim G/B+\dim C_B(x_0).
  \]
  By Theorem \ref{thm:genstab} and Lemma \ref{lemma:SOSP}, the generic
  isotropy group of $B$ on $\SO(n)/\SO(n-m)$ contains a subgroup which
  is isogenous to a Borel subgroup, say $B_2$, of $\SO(n-2m)$. Thus
  Lemma \ref{lemma3refined} implies $\dim C_B(x_0)\ge\dim B_2$.  To
  keep the dependence on the parity of $n$ to a minimum, observe that
  $\dim \SO(n)=\frac12n(n-1)$ and $\rk \SO(n)-\rk \SO(n-2m)=m$ for all
  $n$ and $m$. Hence we arrive at the following contradiction:
  \begin{align*}
    \dim H \ge&\ \frac12(\dim \SO(n)-\rk \SO(n))+
    \frac12(\dim \SO(n - 2m)+\rk \SO(n - 2m))\\
    & \quad - \dim \SO(n - m)\\
    =&\ \frac12m(m-1)=\dim \SO(m).
  \end{align*}
  For (ii) we argue in the same way and get
  \[
  \dim H\ge n^2+(n-2m)(n-2m+1)-(n-m)(2n-2m+1)=m(2m-1).
  \]
\end{proof}

\begin{corollary}
  \label{cor:spin7}
  Let $p=2$ and $n\ge5$. Then
  $H=\Spin(7)\times\Sp(2n-8)\subset\Sp(8)\times\Sp(2n-8)$ is not
  spherical in $G=\Sp(2n)$.
\end{corollary}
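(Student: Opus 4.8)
The plan is to apply Lemma~\ref{lemma:SOSP2}(ii) with the roles $G=\Sp(2n)$, the first factor $\Sp(2m)=\Sp(8)$ (so $m=4$), and $H=\Spin(7)$ embedded in $\Sp(8)$ via the spin representation. First I would verify the hypothesis $2m\le n$: this reads $8\le n$, which is slightly stronger than the stated $n\ge5$, so I would instead split into two cases. For $n\ge8$ we are in the regime of Lemma~\ref{lemma:SOSP2}(ii), and for $5\le n\le7$ a separate (small) argument is needed — I expect this case distinction to be the main subtlety, and I return to it below.

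In the case $n\ge8$: suppose for contradiction that $\Spin(7)\times\Sp(2n-8)$ is spherical in $\Sp(2n)$. Since $\Spin(7)\times\Sp(2n-8)\subseteq\Sp(8)\times\Sp(2n-8)$ with all groups connected reductive, Lemma~\ref{lem:subgroups} (transitivity) gives that $\Spin(7)\times\Sp(2n-8)$ is spherical in $\Sp(8)\times\Sp(2n-8)$; but more to the point, it is spherical in $\Sp(2n)$, so Lemma~\ref{lemma:SOSP2}(ii) applies directly with $H=\Spin(7)$ and $2m=8$, yielding
\[
\dim\Spin(7)\ge\dim\SO(8)=m(2m-1)=4\cdot7=28.
\]
But $\dim\Spin(7)=21<28$, a contradiction. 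Hence $\Spin(7)\times\Sp(2n-8)$ is not spherical in $\Sp(2n)$ for $n\ge8$.

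For the remaining range $5\le n\le7$, I would fall back on the dimension bound of Lemma~\ref{lemma3}: if $H=\Spin(7)\times\Sp(2n-8)$ were spherical in $\Sp(2n)$, then $\dim H\ge\dim\Sp(2n)/B=\tfrac12(\dim\Sp(2n)-\rk\Sp(2n))=n^2$. Here $\dim H=21+\dim\Sp(2n-8)=21+(n-4)(2n-7)$ for $n\ge4$, and for $n=5,6,7$ this equals $21+6$, $21+22$, $21+42$, i.e.\ $27,43,63$, while $n^2=25,36,49$ respectively — so the bound is \emph{not} violated, and the crude dimension count alone does not suffice. Thus the honest approach for $n=5,6,7$ is to apply Theorem~\ref{thm:genstab} and Lemma~\ref{lemma:SOSP} to $X=\Sp(2n)/\Sp(2n-8)$ directly: the generic $B$-isotropy contains a subgroup isogenous to a Borel of $\Sp(2n-16)$ (interpreting $\Sp(0)$, $\Sp(-\,\cdot\,)$ appropriately — when $2n\le16$ one uses instead that $\Xi(X)$ is contained in $\langle\omega_1,\ldots,\omega_{2n}\rangle$, which is the whole weight lattice, and a finer branching analysis is required). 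In fact the cleanest route, which I would adopt, is to note that these small cases are subsumed by the larger classification: the genuinely new spherical case in characteristic~2 is $\G_2\times\Sp(2)\subset\Sp(8)$ and its relatives (Table~\ref{table:sp-classical}), and $\Spin(7)$ sits spherically in $\Sp(8)$ only when the complementary factor is trivial; so for $5\le n\le7$ one checks against the (already established) entries of Table~\ref{table:sp-classical} for $\Sp(10)$, $\Sp(12)$, $\Sp(14)$ and observes $\Spin(7)\times\Sp(2n-8)$ appears on none of them. The main obstacle, then, is precisely that Lemma~\ref{lemma:SOSP2}(ii) has the hypothesis $2m\le n$ which forces $n\ge8$, so the corollary's stated range $n\ge5$ genuinely requires handling $n\in\{5,6,7\}$ by a complementary method — either a direct generic-stabilizer computation for $\Sp(2n)/\Sp(2n-8)$ or appeal to the classification tables.
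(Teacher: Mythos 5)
Your treatment of the range $n\ge8$ is exactly the paper's argument: Lemma \ref{lemma:SOSP2}(ii) with $m=4$ gives $\dim\Spin(7)\ge\dim\SO(8)=28$, contradicting $\dim\Spin(7)=21$. The gap is in the range $n\in\{5,6,7\}$, where you miscompute $\dim H$. Since $\dim\Sp(2k)=k(2k+1)$, one has $\dim\Sp(2n-8)=(n-4)(2n-7)$, which equals $3$, $10$, $21$ for $n=5,6,7$ --- not $6$, $22$, $42$ as you wrote. Hence $\dim H=24,\,31,\,42$, while $\dim G/B=n^2=25,\,36,\,49$, so the inequality \eqref{eq:2} \emph{is} violated in all three cases and the crude dimension count does suffice. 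This is precisely how the paper disposes of $n=5,6,7$ before switching to Lemma \ref{lemma:SOSP2}(ii) at $n=8$ (the first value where the hypothesis $2m\le n$ holds; note the naive bound genuinely fails only from $n=11$ on, so the two ranges overlap comfortably).

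Because of this arithmetic slip you are driven to fallbacks that do not work. The generic-stabilizer route via Lemma \ref{lemma:SOSP} requires $2m\le n$ and, as you yourself observe, degenerates for $n\le7$. The alternative of checking against the ``already established'' entries of Table \ref{table:sp-classical} for $\Sp(10)$, $\Sp(12)$, $\Sp(14)$ is circular: Corollary \ref{cor:spin7} is itself an input to the classification of spherical subgroups of $\Sp(2n)$ (it is invoked in case (ii) of the proof of Lemma \ref{lem:lem23}), so those table entries are not available at this point. Correcting the dimension count removes the need for any of this and recovers the paper's two-line proof.
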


\begin{proof}
  For $n=5$, $6$, and $7$, the result follows from \eqref{eq:2}.  Now
  let $n\ge 8$. Noting that $21=\dim\Spin(7)<\dim \SO(8)=28$, it
  follows from Lemma \ref{lemma:SOSP2}(ii) that $H$ is not spherical.
\end{proof}

\begin{proposition}
  \label{prop:g2}

  Let $p=2$ and $n\ge4$. Then
  $H:=\G_2\times\Sp(2n-6)\subset\Sp(6)\times\Sp(2n-6)$ is spherical in
  $G=\Sp(2n)$ if and only if $n=4$. In that case $\Xi(G/H)=\langle
  \omega_1+\omega_4,\omega_2,\omega_3\rangle_\BBZ$.

\end{proposition}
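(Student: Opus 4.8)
The plan is to treat the two implications separately, the ``only if'' part being quick and the ``if'' part (together with the computation of $\Xi(G/H)$) carrying all the weight.

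For the non‑sphericality when $n\ge5$ I would argue exactly as in Corollary~\ref{cor:spin7}. When $n=5$ the dimension bound already decides it: $\dim H=\dim\G_2+\dim\Sp(4)=14+10=24$ while $\dim\Sp(10)/B=\tfrac12(55-5)=25$, so $H$ is not spherical by Lemma~\ref{lemma3}. When $n\ge6$ I would write $\Sp(6)=\Sp(2m)$ with $m=3$, note $2m=6\le n$, and apply Lemma~\ref{lemma:SOSP2}(ii) to $\G_2\subseteq\Sp(6)$: sphericality of $H=\G_2\times\Sp(2n-6)$ in $\Sp(2n)$ would force $\dim\G_2\ge\dim\SO(6)=15$, contradicting $\dim\G_2=14$.

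For the sphericality when $n=4$ I would set $G=\Sp(8)$, fix a nondegenerate $2$‑plane $V_2\subset V_8$ with $V_6:=V_2^\perp$, and take $K:=\Sp(V_6)\times\Sp(V_2)\cong\Sp(6)\times\Sp(2)$, so that $H=\G_2\times\Sp(2)$ sits inside $K$ with $\G_2\subset\Sp(V_6)$. Two facts are imported from Krämer's list via Corollary~\ref{cor:lift}: that $\Sp(6)\times\Sp(2)\subset\Sp(8)$ is spherical (it is an entry of Table~\ref{table:sp-classical}), so $G/K$ is an affine spherical $G$‑variety, of rank $1$; and that $\G_2\subset\SO(7)$ is spherical, whence by the bijective isogeny $\SO(7)\to\Sp(6)$ of Remark~\ref{rem:isogeny}(ii) and Lemma~\ref{lem:isogeny} the subgroup $\G_2\subset\Sp(6)$ is spherical, so $\Sp(6)/\G_2$ is an affine $\Sp(6)$‑spherical variety of rank $1$ and dimension $7$. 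There is a $G$‑equivariant fibration $G/H\to G/K$ with fibre $\Sp(6)/\G_2$. The idea is then: fix $x_0$ in the open $B$‑orbit of $G/K$ and put $L:=C_B(x_0)$, of dimension $\dim B-\dim G/K=20-12=8$; it stabilises the fibre over $x_0$ and acts on it through its image $\bar L$ in $\Sp(V_6)\cong\Sp(6)$. Choosing $x_0$ to be a nondegenerate $2$‑plane in general position relative to the isotropic flag fixed by $B$, one checks that $L\to\Sp(V_6)$ is injective, so $\bar L$ is an $8$‑dimensional subgroup of $\Sp(6)$, and that $\bar L$ is in sufficiently general position with respect to $\G_2$ that $\bar L\cap g\,\G_2\,g^{-1}$ has the expected dimension $1$ for generic $g$ — i.e.\ $\bar L$ has a dense orbit on $\Sp(6)/\G_2$. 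The generic $B$‑orbit on $G/K$ then lifts to a dense $B$‑orbit on $G/H$, so $H$ is spherical.

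For $\Xi(G/H)$ the fibration yields an exact sequence $0\to\Xi(G/K)\xrightarrow{\pi^*}\Xi(G/H)\to\Xi_{\bar L}(\Sp(6)/\G_2)$, so $\Xi(G/H)$ is trapped between the rank‑$1$ lattice $\Xi(G/K)$ (known from Krämer, generated by a multiple of $\omega_2$, and $p$‑independent after inverting $p$ as in the proof of Lemma~\ref{lemma:SOSP}) and an extension by the $\bar L$‑semi‑invariants of the fibre. Carrying this out with the explicit $\bar L$ — or, equivalently, computing directly which costandard $\Sp(8)$‑modules $H^0(\chi)$ carry a trivial $\G_2\times\Sp(2)$‑subquotient in characteristic $2$ — should produce the two remaining generators $\omega_1+\omega_4$ and $\omega_3$, giving $\Xi(G/H)=\langle\omega_1+\omega_4,\omega_2,\omega_3\rangle_\BBZ$. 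As consistency checks, this lattice has $\BBZ$‑rank $3$, yields $S_0=\varnothing$ and $P(G/H)=B$ in the notation of Theorem~\ref{thm:genstab}, and forces $\dim C_B(\tilde x_0)=17-16=1$ via Lemma~\ref{lemma3refined}, matching the $1$‑dimensional generic isotropy found above.

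The \emph{main obstacle} is the middle step in the previous paragraph: pinning down the $8$‑dimensional generic $B$‑isotropy group $L$ on the symplectic $2$‑plane Grassmannian $G/K$ and verifying that its image $\bar L$ in $\Sp(6)$, although much smaller than a Borel subgroup of $\Sp(6)$, nevertheless meets a generic conjugate of $\G_2$ transversally (and then identifying its semi‑invariants on $\Sp(6)/\G_2$). This is exactly the genuinely new characteristic‑$2$ input: the embedding $\G_2\hookrightarrow\Sp(6)$ does not exist in characteristic $0$, so this case cannot be obtained by reduction mod~$p$ and must be checked by hand.
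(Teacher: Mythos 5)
Your treatment of non-sphericality for $n\ge5$ is correct and coincides with the paper's: $n=5$ is settled by the dimension bound \eqref{eq:2}, and $n\ge6$ by Lemma \ref{lemma:SOSP2}(ii) since $\dim\G_2=14<15=\dim\SO(6)$. The problem lies entirely in the case $n=4$ and the computation of $\Xi(G/H)$.

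There your fibration strategy reduces everything to two unproved assertions: that the $8$-dimensional generic $B$-isotropy group $L=C_B(x_0)$ on $G/K$ maps injectively into $\Sp(V_6)$, and that its image $\bar L$ has a dense orbit on the $7$-dimensional variety $\Sp(6)/\G_2$. You supply no argument for either; you yourself flag the second as the ``main obstacle,'' and it is not a routine transversality check --- $\bar L$ is a solvable group much smaller than a Borel subgroup of $\Sp(6)$, and deciding whether such a group has a dense orbit on $\Sp(6)/\G_2$ in characteristic $2$ is essentially as hard as the original sphericality question. The identification of the generators $\omega_1+\omega_4$ and $\omega_3$ is likewise only asserted (``should produce''). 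So the ``if'' direction and the description of $\Xi(G/H)$ remain unproved. The paper proceeds quite differently and in the opposite logical order: it first shows directly that $\omega_2$, $\omega_3$ and $\omega_1+\omega_4$ lie in $\Xi(G/H)$ by exhibiting $H$-fixed vectors in the modules $H^0(\chi)$ --- for $\omega_1+\omega_4$ this is the one genuinely computational step, using L\"ubeck's tables and Weyl module characters to show that the $\Htilde$-module $\Htilde^0(\omega_3)$, hence a $\G_2\times\Sp(2)$-fixed vector, sits inside $L(\omega_1+\omega_4)$, with selfduality forcing a copy of $\Ltilde(\omega_3)$ into the socle. Since no simple coroot is orthogonal to all three weights, Theorem \ref{thm:genstab} bounds the generic $B$-isotropy group by a $1$-dimensional torus, so $\dim B\cdot x\ge 19=\dim G/H$, and both sphericality and $\Xi(G/H)=\langle\omega_1+\omega_4,\omega_2,\omega_3\rangle_\BBZ$ drop out simultaneously. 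To salvage your route you would have to determine $\bar L$ explicitly and verify the dense-orbit claim by hand; as written, the key case $n=4$ is not established.
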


\begin{proof}

  For $n=5$, the result follows from \eqref{eq:2}.  Let $n\ge6$. Since
  $\dim\G_2=14<\dim\SO(6)=15$, the assertion follows from Lemma
  \ref{lemma:SOSP2}(ii).

  It remains to check that $H$ is spherical if $n=4$. Put $\Htilde
  :=\Sp(6)\times\Sp(2)$ and write $\Htilde^0(\chi)$ for the
  corresponding $\Htilde$-module and
  $\Ltilde(\chi)$ for the simple $\Htilde$-module of highest weight
  $\chi$.
 
  We first show that
  $A:=\{\omega_1+\omega_4,\omega_2,\omega_3\}\subseteq\Xi(G/H)$. This
  is equivalent to the  $G$-modules $H^0(\chi)$ with $\chi\in
  A$ having an $H$-fixed vector.

  For $\chi=\omega_2$ this follows from the fact that even $\Htilde$
  has a fixed vector. Moreover, it is known that $\G_2$ fixes a vector
  in the $\Htilde$-module $\Htilde^0(\omega_3)$ which in turn is
  contained in $H^0(\omega_3)$.

  For $\chi=\omega_1+\omega_4$ it suffices to show that the
  irreducible $G$-module $L(\chi)\subset H^0(\chi)$ contains the
  $\Htilde$-module $\Htilde^0(\omega_3)$. Using the known characters
  of Weyl modules and the dimensions of the irreducible modules in
  \cite{Lue}, one easily computes that, as an $\Htilde$-module, 
  $L(\chi)$ has the composition factors
  $\Ltilde(\omega_1+\omega_2+\omega_1')$, $\Ltilde(2\omega_3+2\omega_1')$
  and $\Ltilde(\omega_3)$, the first two occur with multiplicity $1$ and the
  third one with multiplicity $2$. Since $L(\chi)$ is selfdual, (at least)
  one of the two copies of $\Ltilde(\omega_3)$ has to appear in the socle. This
  concludes the proof that $H^0(\omega_1+\omega_4)^H\ne \{0\}$.

  Since there is no simple coroot which is orthogonal to all the
  weights in $A$, we infer from Theorem \ref{thm:genstab} that the
  connected $B$-isotropy group of a generic point $x\in G/H$ is a
  torus of dimension at most one. Thus $\dim B \cdot x\ge 19$, whereas
  $\dim G/H=36-14-3=19$. This shows that $G/H$ is spherical of rank
  $3$. In particular, $\Xi(G/H)$ is spanned by $A$, as claimed.
\end{proof}

\section{Irreducible Spherical Subgroups of Classical Groups}
\label{sec:irreducible}

Let $G$ be a classical group with natural representation $V$. A
subgroup $H\subseteq G$ is called \emph{irreducible} if $V$ is
irreducible as an $H$-module. Otherwise, $H$ is called
\emph{reducible}. Clearly, irreducible subgroups only exist if $G$
itself is irreducible, i.e., strictly classical and not equal to
$\SO(2)$. It is well known that connected irreducible subgroups are
necessarily semisimple.

In preparation for determining the non-simple irreducible spherical
subgroups, we consider some very special cases:

\begin{lemma} 
\label{lemma4}
Of the following pairs $H \subset G$,
\begin{align*}
&\SL(m)\otimes \SL(n)\subset \SL(mn),\quad m\ge n\ge2,\\
&\SO(m)\otimes \SO(n)\subset \SO(mn),\quad m\ge n\ge2,\ m,n\ \text{even if } p=2,\\
&\Sp(m)\otimes \Sp(n)\subseteq \SO(mn),\quad m\ge n\ge2,\ m,n\ \text{even},\\
&\Sp(m)\otimes \SO(n)\subset \Sp(mn),\quad m,n\ge2,\ m\ \text{even},\ n\ \text{even if } p=2,
\end{align*}
only the following are spherical:
\begin{align*}
&\SL(2)\otimes \SL(2) \subset \SL(4),\\
&\SO(2)\otimes \SO(2) \subset \SO(4),\\
&\Sp(2)\otimes \Sp(2) =\SO(4),\\
&\Sp(4)\otimes \Sp(2) \subset \SO(8),\\
&\Sp(2)\otimes \SO(2) \subset \Sp(4).
\end{align*}
\end{lemma}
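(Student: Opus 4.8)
The basic strategy is a dimension count via Lemma~\ref{lemma3}: for each of the four families of tensor-product embeddings $H \subset G$, we compute $\dim H$ and $\dim G/B$, and show that the inequality $\dim H \ge \dim G/B$ fails except in the finitely many small cases listed. Since $H = H_1 \otimes H_2$ has $\dim H = \dim H_1 + \dim H_2$ (the images of the two factors commute and intersect in a finite central subgroup), while $\dim G$ grows quadratically in $mn$, the inequality \eqref{eq:2} will be violated for all but the smallest values of $m,n$. So the bulk of the proof is: (a) write down $\dim H$ and $\dim G/B$ as explicit polynomials in $m,n$; (b) determine exactly the pairs $(m,n)$ for which $\dim H \ge \dim G/B$; (c) for those finitely many surviving pairs, decide sphericality directly.

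\textbf{Carrying it out.} First I would record, for $G$ one of $\SL(N)$, $\SO(N)$, $\Sp(N)$ with $N = mn$: $\dim G/B = \tfrac12(\dim G - \rk G)$, which equals $\tfrac12(N^2-N)$ for $\SL(N)$ (rank $N-1$, so actually $\tfrac12(N-1)(N)$ — careful with the exact formula), $\tfrac12\binom{N}{2}\cdot 2 = \tfrac14 N(N-1)$-ish for the orthogonal/symplectic cases; I will just use $\dim G/B = \tfrac12(\dim G - \rk G)$ uniformly. On the $H$ side, $\dim \SL(k) = k^2-1$, $\dim \SO(k) = \tfrac12 k(k-1)$, $\dim \Sp(k) = \tfrac12 k(k+1)$. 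Plugging these in, the inequality $\dim H_1 + \dim H_2 \ge \tfrac12(\dim G - \rk G)$ becomes, in each case, a quadratic inequality in $m$ and $n$ that holds only for bounded $m,n$; one shows for fixed small $n$ (say $n=2,3,4$) that $m$ is forced to be small, and for $n \ge $ some threshold there are no solutions at all. This reduces us to a short finite list of candidate pairs, among which are exactly the five claimed spherical ones together with a handful of extra pairs that must be excluded.

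\textbf{The finite check.} For the surviving candidate pairs, two things remain. The pairs that \emph{are} spherical — $\SL(2)\otimes\SL(2)\subset\SL(4)$, $\SO(2)\otimes\SO(2)\subset\SO(4)$, $\Sp(2)\otimes\Sp(2)=\SO(4)$, $\Sp(4)\otimes\Sp(2)\subset\SO(8)$, $\Sp(2)\otimes\SO(2)\subset\Sp(4)$ — are all already in Krämer's list (note $\Sp(4)\otimes\Sp(2)\subset\SO(8)$ is footnoted in Table~\ref{table:sp-classical}), so by Corollary~\ref{cor:lift} they are spherical in every characteristic; alternatively the first three are small enough to verify by hand, and $\Sp(2)\otimes\Sp(2)=\SO(4)$ is an isomorphism. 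For the candidate pairs where \eqref{eq:2} holds but which are \emph{not} on the claimed list (these will be a small number of borderline cases, e.g.\ something like $\SL(3)\otimes\SL(2)\subset\SL(6)$ or $\SO(3)\otimes\SO(3)\subset\SO(9)$ or $\Sp(4)\otimes\Sp(4)$-type entries, depending on how the arithmetic falls out), I would rule them out using the refined bound Lemma~\ref{lemma3refined} together with Theorem~\ref{thm:genstab}: compute $\Xi(G/H)$ (lifting to characteristic $0$ via Corollary~\ref{cor:lift} and invoking classical branching laws as in the proof of Lemma~\ref{lemma:SOSP}), deduce the generic $B$-isotropy group is too small, and derive a contradiction with the dimension formula.

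\textbf{Main obstacle.} The genuinely delicate part is not the asymptotic dimension count — that is routine — but the handful of borderline pairs where \eqref{eq:2} is satisfied yet the subgroup fails to be spherical; there the crude inequality gives no information and one must actually understand $\Xi(G/H)$ and the generic isotropy group. This is where lifting to characteristic $0$ (Corollary~\ref{cor:lift}), invoking Krämer's classification to see that the pair is \emph{absent} from his list, and then using Theorem~\ref{Theorem1} to conclude it is non-spherical in all characteristics, does the real work. (One subtlety to watch: when $p=2$ the orthogonal tensor constructions require the even-dimensionality hypotheses built into the statement, so those cases need the ``strictly classical'' reductions of Remark~\ref{rem:isogeny} to be invoked cleanly.)
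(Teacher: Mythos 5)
Your proposal is correct and matches the paper, which in fact offers exactly your two routes: a direct check of the dimension inequality \eqref{eq:2} (which the paper leaves to the reader) and, for the surviving cases, lifting to characteristic $0$ via Corollary \ref{cor:lift} together with Krämer's classification. One small reassurance: the borderline cases you worry about do not materialize — working out \eqref{eq:2} in each of the four families shows that the only pairs $(m,n)$ satisfying $\dim H\ge\dim G/B$ are precisely the five listed ones (with equality in all but $\Sp(4)\otimes\Sp(2)\subset\SO(8)$), so the heavier machinery of Theorem \ref{thm:genstab} and $\Xi(G/H)$ is never needed here.
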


\begin{proof}
There are two possible proofs. First, observe that all
subgroups lift to characteristic $0$. Hence, the assertion follows (apart from the
trivial case $G=\SO(4)$) from 
 Corollary \ref{cor:lift} and Krämer's
classification \cite{Kr}. The second proof consists in directly using
the inequality \eqref{eq:2} which is easy and left to the reader.
\end{proof}
 Next we determine the irreducible, spherical
subgroups which are not simple.

\begin{lemma} 
\label{lemma5}
Let $G$ be a classical group and $H\subset G$ a proper,
connected, irreducible, spherical subgroup which is not simple. Then
the pair $H\subset G$ is one of the following:
\begin{align*}
\SO(4)&\subset \SL(4),\\ 
\Sp(4)\otimes \Sp(2)&\subset \SO(8),\\ 
\SO(4)&\subset \Sp(4),\qquad(\hbox{if }p=2).
\end{align*}
\end{lemma}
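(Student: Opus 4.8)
Let $H\subset G$ be a proper, connected, irreducible, non-simple spherical subgroup of a classical group $G$ with natural module $V$. Since $H$ is connected and not simple, it is semisimple of rank $\ge2$ and its natural action on $V$ comes from a nontrivial tensor product decomposition. The plan is to reduce to the four families of tensor-product pairs handled in Lemma~\ref{lemma4} and then read off the answer. First I would recall the structure theory: an irreducible module for a semisimple group that is a nontrivial product $H=H_1\cdot H_2$ (central product) is an (outer) tensor product $V=V_1\otimes V_2$ of irreducibles for the factors, and by induction it suffices to treat $H$ with exactly two simple factors, writing $V=V_1\otimes V_2$ with $\dim V_i=n_i\ge2$. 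The type of $G$ forces the type of each factor's image: if $G=\SL(V)$ then $H\subseteq\SL(V_1)\otimes\SL(V_2)$ after replacing each factor by its image in $\GL(V_i)$ (which, being irreducible, is $\SL$, $\SO$ or $\Sp$); if $G=\Sp(V)$ or $\SO(V)$ then the invariant form on $V$ is, up to scalar, a tensor product of invariant forms on $V_1$ and $V_2$, so each factor preserves a symmetric or alternating form, and the symmetric/alternating types multiply in the obvious way. This pins $H$ into one of the larger pairs
\[
\SL(m)\otimes\SL(n),\quad \SO(m)\otimes\SO(n),\quad \Sp(m)\otimes\Sp(n),\quad \Sp(m)\otimes\SO(n),
\]
sitting inside $\SL(mn)$, $\SO(mn)$, $\SO(mn)$, $\Sp(mn)$ respectively (with the parity constraints in characteristic~$2$ coming from strict classicality, i.e.\ Remark~\ref{rem:isogeny}(ii): the natural module of a strictly classical group is completely reducible, which forces even-dimensional modules for $\SO$-factors when $p=2$).

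Next, by Lemma~\ref{lem:subgroups} (the transitivity property), if $H$ is spherical in $G$ then the intermediate tensor-product group $\widetilde H$ of the relevant family is also spherical in $G$. So it suffices to know which of those four families of tensor-product pairs are spherical, and this is exactly the content of Lemma~\ref{lemma4}: the only spherical ones are
\[
\SL(2)\otimes\SL(2)\subset\SL(4),\quad \SO(2)\otimes\SO(2)\subset\SO(4),\quad \Sp(2)\otimes\Sp(2)=\SO(4),
\]
\[
\Sp(4)\otimes\Sp(2)\subset\SO(8),\quad \Sp(2)\otimes\SO(2)\subset\Sp(4).
\]
Now I would go through this short list and keep only those whose factors can be replaced by a proper irreducible $H$ that is still non-simple: the groups $\SO(2)\cong\Gm$ and $\SL(2)=\Sp(2)$ are tori or simple, so for $H$ itself to be non-simple we need at least two simple factors. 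The surviving candidates are: $\Sp(2)\otimes\Sp(2)=\SO(4)$, which as a subgroup of $\SL(4)$ gives $\SO(4)\subset\SL(4)$, and (when $p=2$, where $\Sp(2)\otimes\SO(2)$ involves a torus factor) the only genuinely non-simple option inside $\Sp(4)$ is again $\Sp(2)\otimes\Sp(2)=\SO(4)\subset\Sp(4)$; and $\Sp(4)\otimes\Sp(2)\subset\SO(8)$, which is already non-simple. (For $p\ne2$, $\SO(4)$ is not irreducible in $\Sp(4)$ — its natural $4$-dimensional module splits — so that case only occurs in characteristic~$2$, where $\Sp(2)\otimes\Sp(2)=\SO(4)$ sits irreducibly in $\Sp(4)$.) This yields precisely the three pairs in the statement.

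\textbf{Main obstacle.} The essential point — and the step requiring the most care — is the reduction from an arbitrary irreducible non-simple $H$ to the four explicit tensor-product families, i.e.\ correctly bookkeeping which classical type each tensor factor can carry and how the bilinear/symmetric form on $V$ decomposes, especially in characteristic~$2$ where orthogonal groups are delicate (Dickson invariant, non-reduced stabilizers) and where one must invoke strict classicality (Remark~\ref{rem:isogeny}(ii)) to rule out odd-dimensional orthogonal factors. Once one is inside those four families, everything is immediate from Lemma~\ref{lemma4} and Lemma~\ref{lem:subgroups}; the remaining work is just discarding the entries of the short list whose factors are all simple (or tori), which is routine inspection. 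One subtlety worth flagging explicitly: in the pair $\Sp(2)\otimes\Sp(2)$ the product group equals $\SO(4)$ on the nose (an exceptional low-rank coincidence), so it genuinely is a non-simple irreducible subgroup of both $\SL(4)$ and — when $p=2$ — of $\Sp(4)$, and this coincidence is what produces two of the three items in the conclusion.
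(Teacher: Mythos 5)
Your overall strategy is the same as the paper's (decompose $V=V_1\otimes V_2$, reduce to the tensor-product pairs of Lemma~\ref{lemma4}, use transitivity), but the step you yourself flag as the main obstacle — the characteristic~$2$ bookkeeping — is exactly where the proof has real content, and your sketch does not supply it. In characteristic~$2$ the dichotomy ``symmetric or alternating, and the types multiply'' breaks down: alternating forms are symmetric, and the relevant question for an orthogonal ambient group is whether a factor carries an invariant \emph{quadratic} form, which need not be inherited from a tensor decomposition of a quadratic form on $V$. The paper resolves this by showing that for $p=2$ \emph{every} self-dual irreducible $V_i$ is in fact symplectic: the invariant bilinear form $\beta$ on $V_i$ is unique up to scalar by Schur's lemma and must be symmetric, and if it were not alternating then $v\mapsto\sqrt{\beta(v,v)}$ would be a nonzero $H_i$-invariant linear form, contradicting irreducibility. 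Hence $H\subseteq\Sp(V_1)\otimes\Sp(V_2)\subseteq\SO(V)\subset\Sp(V)$. Your alternative appeal to strict classicality (ruling out odd-dimensional $\SO$-factors) does not substitute for this argument.

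This omission has a concrete consequence: for $G=\Sp(V)$ with $p=2$ your enumeration only produces $\SO(4)\subset\Sp(4)$, but the containment $\Sp(V_1)\otimes\Sp(V_2)\subseteq\SO(V)\subset\Sp(V)$ together with Lemma~\ref{lem:subgroups} also leaves the candidate $\Sp(4)\otimes\Sp(2)\subset\Sp(8)$, which you never exclude; the paper kills it with \eqref{eq:2} ($\dim H=13<16=\dim G/B$). A second, smaller gap is the passage from ``$H$ lies in the full tensor-product group $\widetilde H$'' to ``$H=\widetilde H$'': discarding proper subgroups is not just ``routine inspection of which factors are simple.'' For $\widetilde H=\Sp(4)\otimes\Sp(2)\subset\SO(8)$ the paper argues that $\dim H\ge\dim G/B=12$ while $\dim\widetilde H=13$, and a semisimple group has no reductive subgroup of codimension one, forcing $H=\widetilde H$; some such argument is needed in your write-up as well.
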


\begin{proof}
By assumption, there are decompositions $H=H_1\cdot H_2$ and
$V=V_1\otimes V_2$, where $V_i$ is an irreducible $H_i$-module. For
$G=\SL(n)$, Lemma \ref{lemma4} shows that $H_1,H_2\subseteq \SL(2)$,
which implies $H_1=H_2 = \SL(2)$, hence $H=\SO(4)$.

Now let $G=\SO(V)$ or $G=\Sp(V)$ and assume first that $p\ne2$. Since
$V=V_1\otimes V_2$ is selfdual, the same holds for the factors
$V_i$. Thus $H_i$ is either symplectic or orthogonal. Since $H\ne G$,
we have $G\ne\SO(4)$. Therefore, the only case to consider according
to Lemma \ref{lemma4} is $H_1\times H_2\subseteq
\Htilde:=\Sp(4)\times\Sp(2)$ and $G=\SO(8)$. But in that case $\dim
G/B=12$ while $\dim \Htilde=13$. This implies $H=\Htilde$, since a
semisimple group does not contain a reductive subgroup of codimension
one.

Now assume that $p=2$ and that $V$ is selfdual. Then each factor $V_i$ is still
selfdual and we claim that it is even symplectic. To show this let
$\beta: V_i\times V_i\to  k$ be a non-zero $H_i$-invariant
pairing. Schur's Lemma implies that $\beta$ is unique up to a
scalar. It is symmetric since otherwise
$\beta'(u,v)=\beta(u,v)+\beta(v,u)$ is non-zero and
symmetric. But then $\ell(v):=\sqrt{\beta(v,v)}$ is an $H_i$-invariant
linear form. The irreducibility of $V_i$ implies $\ell=0$. Thus
$\beta(v,v)\equiv0$ proving the claim.

Consequently, we have
\[
H\subseteq\Sp(V_1)\otimes\Sp(V_2)\subseteq\SO(V)\subset\Sp(V).
\]
According to Lemma \ref{lemma4}, we are left with the following cases. If
$G=\SO(V)$, then $H=\Sp(4)\otimes\Sp(2)$, as before. If $G=\Sp(V)$, then
$H$ is spherical in $\SO(V)$, as well. Thus either $H=\SO(4)\subset
G=\Sp(4)$ (which is spherical) or $H=\Sp(4)\otimes\Sp(2)\subset
G=\Sp(8)$ (which is not spherical, by \eqref{eq:2},  because $\dim G/B = 16$ and $\dim H = 13$).
\end{proof}

To determine all simple irreducible spherical subgroups, we need the
following estimate to bound the dimension of a simple $H$-module.
The proof of the result follows easily by inspection.

\begin{lemma}
\label{lemma6}
Let $H$ be a simple group with Weyl group $W_H$ and let $\omega$ be a
fundamental dominant weight of $H$ with
\[
|W_H\cdot \omega|\le2\sqrt{\dim H+{1/4}}+1.
\]
Then the pair $(H,\omega)$ appears in 
Table \eqref{eq:4}. 
\begin{table}[h!t]
  \renewcommand{\arraystretch}{1.2}
  \begin{equation}
    \label{eq:4}
    \begin{array}{ll}
      H                & \omega\\
      \hline\hline
      \A_1           &\omega_1\\ 
      \A_2           &\omega_1, \omega_2\\ 
      \A_3           &\omega_1, \omega_2, \omega_3\\ 
      \A_4           &\omega_1, \omega_2, \omega_3, \omega_4\\ 
      \A_n, n\ge5    &\omega_1, \omega_n\\
      \hline
      \B_3           &\omega_1, \omega_3\\ 
      \B_n, n\ge4 &\omega_1  
    \end{array}
    \qquad\qquad
    \begin{array}{ll}
      H                & \omega\\
      \hline\hline
      \C_2           &\omega_1, \omega_2\\ 
      \C_3           &\omega_1, \omega_3\\ 
      \C_n, n\ge4   &\omega_1\\ 
      \hline
      \D_4           &\omega_1, \omega_3, \omega_4\\ 
      \D_n, n\ge5   &\omega_1\\ 
      \hline
      \G_2             &\omega_1, \omega_2\\
      \ 
    \end{array}  
  \end{equation}
\end{table}
\end{lemma}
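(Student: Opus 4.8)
The plan is to prove Lemma \ref{lemma6} by a direct case-by-case inspection, translating the hypothesis $|W_H\cdot\omega|\le 2\sqrt{\dim H+1/4}+1$ into an explicit arithmetic inequality for each Cartan type. The key observation is that the orbit size $|W_H\cdot\omega|$ equals $|W_H|/|W_{H,\omega}|$, where $W_{H,\omega}$ is the stabilizer of $\omega$, which in turn is the Weyl group of the Levi subgroup obtained by deleting the node(s) supporting $\omega$. Thus for each simple type and each fundamental weight $\omega_i$ one has a closed formula for $|W_H\cdot\omega_i|$ (e.g.\ $\binom{n+1}{i}$ for $\A_n$ with weight $\omega_i$, $2n$ for $\B_n,\C_n$ with $\omega_1$, $2^{n-1}$ for the spin weights of $\B_n$, $2^n$ for $\D_n$ with $\omega_n$, etc.), while $\dim H$ is a quadratic polynomial in the rank. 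The right-hand side $2\sqrt{\dim H+1/4}+1$ is roughly $\sqrt{4\dim H}$, so the inequality says the $W_H$-orbit of $\omega$ is small compared with $\sqrt{\dim H}$.

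First I would fix notation: for $H$ of rank $n$, record $|W_H|$, $\dim H$, and for each fundamental weight $\omega_i$ the stabilizer order $|W_{H,\omega_i}|$ as a product of factorials / powers of $2$ coming from the sub-diagram left after removing node $i$. Then for each of the types $\A_n, \B_n, \C_n, \D_n, \E_6, \E_7, \E_8, \F_4, \G_2$ I would simply list the values $|W_H\cdot\omega_i|$ and check the stated inequality, keeping those $(H,\omega)$ that satisfy it. For the classical families one gets genuinely $n$-dependent inequalities: e.g.\ for $\A_n$ the orbit of $\omega_i$ has size $\binom{n+1}{i}$ and $\dim H = n(n+2)$, so for $2\le i\le n-1$ the binomial coefficient grows super-polynomially and already for moderate $n$ it exceeds $\approx 2\sqrt{n(n+2)}$; a short monotonicity argument shows only $i\in\{1,n\}$ survive once $n\ge 5$, while small ranks $n\le 4$ must be checked by hand, giving exactly the entries listed. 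The analogous bookkeeping for $\B_n,\C_n$ ($\omega_1$ always works; $\omega_i$ with $2\le i< n$ has orbit $\binom{2n}{i}$-type size, too big for $n\ge 4$; the spin weight of $\B_n$ has orbit $2^{n-1}$, which beats $2\sqrt{\dim}$ only for $n\le 3$) and for $\D_n$ (the two half-spin weights have orbit $2^{n-1}$, surviving only $n=4$, where triality also brings in $\omega_1$) produces the remaining rows. The exceptional types are a handful of explicit numerical checks: one verifies that for $\E_6,\E_7,\E_8,\F_4$ every fundamental weight has an orbit too large to satisfy the bound, so they contribute nothing, while $\G_2$ contributes both $\omega_1$ (orbit $6$) and $\omega_2$ (orbit $6$), since $6\le 2\sqrt{14+1/4}+1\approx 8.5$.

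The main obstacle — such as it is — is purely organizational: making sure the finitely many small-rank exceptions for $\A_n$ ($n\le 4$), $\B_n,\C_n$ ($n\le 3$), and $\D_n$ ($n=4$) are all enumerated correctly, and that the monotonicity argument which disposes of all large ranks at once is airtight (the orbit sizes for the ``interior'' fundamental weights grow at least like a fixed binomial coefficient, hence faster than the $O(n)$ right-hand side). Since each individual verification is a one-line numerical comparison, the lemma follows by inspection exactly as the statement asserts; there is no conceptual difficulty, only the need to be exhaustive. I would present the proof as: reduce to the inequality $|W_H|/|W_{H,\omega}| \le 2\sqrt{\dim H + 1/4}+1$, note that for rank $\ge$ some explicit bound (depending on type) only the ``end'' fundamental weights $\omega_1$ (and its diagram-automorphism images) can possibly satisfy it, then tabulate the small ranks directly, arriving at Table \eqref{eq:4}.
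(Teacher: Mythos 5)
Your approach is exactly what the paper intends: the paper offers no proof of this lemma beyond the remark that it ``follows easily by inspection,'' and your reduction to the inequality $|W_H|/|W_{H,\omega}|\le 2\sqrt{\dim H+1/4}+1$, with orbit sizes computed from the parabolic stabilizer of $\omega$, is the standard way to carry that inspection out. Almost all of your numerical claims check out against Table \eqref{eq:4}, but there is one slip you should correct: the $W(\B_n)$-orbit of the spin weight $\omega_n$ has size $2^n$, not $2^{n-1}$. Indeed $\omega_n=\tfrac12(\epsilon_1+\cdots+\epsilon_n)$ and $W(\B_n)=S_n\ltimes\{\pm1\}^n$ acts with stabilizer $S_n$, so the orbit is all of $\tfrac12(\pm\epsilon_1\pm\cdots\pm\epsilon_n)$; the count $2^{n-1}$ is correct only for the half-spin weights of $\D_n$, whose Weyl group admits only even sign changes. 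This is not cosmetic: with your value $2^{n-1}$ the pair $(\B_4,\omega_4)$ would give orbit size $8\le 2\sqrt{36+1/4}+1\approx 13.0$ and would have to be added to the table, contradicting both Table \eqref{eq:4} and your own assertion that only $n\le3$ survives. With the correct value, $2^4=16>13.0$ excludes $\B_4$ (and monotonicity disposes of all larger $n$), while $\B_3$ still passes since $2^3=8\le 2\sqrt{21+1/4}+1\approx 10.2$. The rest of your outline (the binomial growth for interior weights of $\A_n$, $\B_n$, $\C_n$, $\D_n$, the small-rank hand checks, and the exceptional-group verifications) is sound.
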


Now we determine the simple, irreducible, spherical subgroups $H$ of a
classical group $G$.

\begin{lemma}
\label{lemma7}
Let $G$ be classical and $H\subset G$ a proper, connected, irreducible,
spherical subgroup. Then, up to conjugacy in $G$, the pair $(G,H)$ 
appears in 
Table \eqref{eq:table1}. 
\begin{table}[h!t]
\renewcommand{\arraystretch}{1.3}
  \begin{equation}\label{eq:table1}
  \begin{array}{lllll}
H & \text{weight} & G && \text{conditions on } p\\
\hline\hline
\SO(n)^{(1)}&\omega_1&\SL(n) &  n\ge3& p\ne2 \text{ for } n\text{ odd}\\ 
\Sp(2n)     &\omega_1          &\SL(2n) &  n\ge2\\ 
\hline
\G_2        &\omega_1          &\SO(7)&    &p\ne2\\ 
\Delta_q\SL(2)^{(2)}&(q+1)\omega_1&\SO(4)&&q=p^m>1\\
\Spin(7)^{(2)}    &\omega_3          &\SO(8)\\ 
\Sp(4)\otimes \Sp(2)^{(2)}&\omega_1+\omega_1'&\SO(8)\\ 
\hline
\SO(2n)     &\omega_1          &\Sp(2n)  & n\ge2&p=2\\ 
\G_2       &\omega_1          &\Sp(6)    &   &p=2\\ 
\Spin(7)^{(1)}    &\omega_3          &\Sp(8)&      &p=2\\
\hline
\multicolumn{5}{l}{\text{\Small${}^{(1)}$ not maximal for $p=2$.}}\\[-5pt]
\multicolumn{5}{l}{\text{\Small${}^{(2)}$ there are two conjugacy classes.}}\\[-5pt]
\end{array}
\end{equation}
\end{table}
\end{lemma}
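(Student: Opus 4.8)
The plan is to combine the dimension estimate of Lemma~\ref{lemma3} with the representation-theoretic bound of Lemma~\ref{lemma6}. Suppose $H\subset G$ is a proper, connected, irreducible, spherical subgroup of a classical group $G$ with natural module $V$ of dimension $N$. Since $H$ is connected and irreducible it is semisimple, and since irreducibility is needed we may already assume $H$ is simple (the non-simple case is Lemma~\ref{lemma5}, which contributes exactly $\Sp(4)\otimes\Sp(2)\subset\SO(8)$ and, for $p=2$, $\SO(4)\subset\Sp(4)$; the pair $\SO(4)\subset\SL(4)$ does not count here, as $\SO(4)$ is not simple). So write $V=L(\omega)$ for some dominant weight $\omega$ of the simple group $H$. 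Sphericality forces, via \eqref{eq:2}, the inequality $\dim H\ge\tfrac12(\dim G-\rk G)$.

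Next I would bound $\dim G$ and $\rk G$ from below in terms of $N=\dim V$: for $G=\SL(N)$ one has $\dim G-\rk G=N^2-N$, for $G=\SO(N)$ one gets roughly $N^2-2N$, and for $G=\Sp(N)$ one gets roughly $N^2$. In every case $\dim G-\rk G\ge N^2-2N$, so \eqref{eq:2} yields $\dim H\ge\tfrac12(N^2-2N)$, i.e.\ $N=\dim L(\omega)\le$ something like $2\sqrt{\dim H+1/4}+1$. The point of phrasing it this way is that $N\ge|W_H\cdot\omega|$ whenever $\omega$ is minuscule-type, but more robustly one always has $\dim L(\omega)\ge|W_H\cdot\omega|$ (the extreme weights are distinct and appear with multiplicity one in any case, and $L(\omega)$ is a quotient of the Weyl module whose character is $W_H$-invariant with the orbit of $\omega$ as extreme weights). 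Hence $|W_H\cdot\omega|\le 2\sqrt{\dim H+1/4}+1$, and Lemma~\ref{lemma6} applies: the pair $(H,\omega)$ must be one of the finitely many entries of Table~\eqref{eq:4}.

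The remaining work is then a finite check: for each candidate $(H,\omega)$ in Table~\eqref{eq:4} I would compute $\dim L(\omega)$ (using \cite{Lue} when $p$ matters), determine which classical group $G$ the module lands in — distinguishing the orthogonal and symplectic cases by whether the $H$-invariant bilinear form on $L(\omega)$ is symmetric or alternating, which in characteristic $2$ requires the extra care already illustrated in the proof of Lemma~\ref{lemma5} — and then test \eqref{eq:2} again, now as an \emph{exact} numerical inequality, to weed out the non-spherical candidates. For the survivors that lift to characteristic $0$, sphericality itself follows from Corollary~\ref{cor:lift} together with Krämer's list; for the genuinely positive-characteristic survivors ($\SO(2n)\subset\Sp(2n)$, $\G_2\subset\Sp(6)$, $\Spin(7)\subset\Sp(8)$, $\Delta_q\SL(2)\subset\SO(4)$ with $p=2$), one uses Lemma~\ref{lem:isogeny} and Remark~\ref{rem:isogeny}(ii) to transport $\SO(2n+1)\supset H$ statements across the exceptional isogeny $\SO(2n+1)\to\Sp(2n)$, reducing everything except the $\Delta_q$ family to already-established orthogonal cases; the $\Delta_q$ case is an explicit low-rank computation of the $B$-orbit on $\SO(4)/\Delta_q\SL(2)$.

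The main obstacle is the bookkeeping in positive characteristic: the inequality test \eqref{eq:2} is harmless, but deciding the exact target group $G$ for a given $L(\omega)$ — in particular the orthogonal-versus-symplectic dichotomy when $p=2$, and making sure one has the right $\dim L(\omega)$ from \cite{Lue} rather than the Weyl dimension — is where the case-by-case care is genuinely needed. One also has to track the footnoted subtleties: that $\SO(n)\subset\SL(n)$ for $n$ odd requires $p\ne2$ and is not maximal at $p=2$, that $\Spin(7)$ and $\Sp(4)\otimes\Sp(2)$ inside $\SO(8)$ each split into two conjugacy classes under triality, and that $\Delta_q\SL(2)$ likewise comes in two $\SO(4)$-classes. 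None of this is deep, but it is exactly the part that must be done by hand rather than by the dimension bound.
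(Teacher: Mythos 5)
Your overall strategy is the same as the paper's: dispose of the non-simple case via Lemma~\ref{lemma5}, then for simple $H$ combine the dimension bound \eqref{eq:2} with the estimate $\dim V\ge|W_H\cdot\omega|$ and Lemma~\ref{lemma6} to constrain $(H,\omega)$, and finish with a finite case check. However, there is a genuine gap in the middle step. Lemma~\ref{lemma6} is a statement about \emph{fundamental} weights, so what the inequality $|W_H\cdot\omega|\le2\sqrt{\dim H+1/4}+1$ actually gives you (via $\mathrm{Stab}_W(\omega)\subseteq\mathrm{Stab}_W(\omega_i)$ for each $i$ in the support of $\omega$) is only that the \emph{support} of $\omega$ consists of fundamental weights from Table~\eqref{eq:4}; it does not bound the coefficients. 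You are therefore still facing infinitely many candidate highest weights, e.g.\ $a\omega_1$ for $\A_1$, $a\omega_1+b\omega_n$ for $\A_n$, and non-$p$-restricted weights such as $(p^m+1)\omega_1$. Your sentence ``Lemma~\ref{lemma6} applies: the pair $(H,\omega)$ must be one of the finitely many entries of Table~\eqref{eq:4}'' is not correct as stated, and taken literally it would wrongly exclude $(\A_1,2\omega_1)$ — which yields $\SO(3)\subset\SL(3)$, an entry of the final table — as well as the $\Delta_q\SL(2)\subset\SO(4)$ family, which you then have to smuggle back in by hand at the end.

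The paper closes exactly this gap with three sub-arguments you omit: (a) Steinberg's Tensor Product Theorem, which for non-$p$-restricted $\omega$ forces the embedding to factor through one of the tensor-product subgroups of Lemma~\ref{lemma4} and thereby \emph{derives} (rather than merely verifies) that the only such case is $\Delta_q\SL(2)\subset\SO(4)$; (b) Lübeck's tables for $\rk H\le4$ to eliminate non-fundamental $p$-restricted weights except $(\A_1,2\omega_1)$ and $(\A_1,3\omega_1)$; and (c) for $\rk H\ge5$, a Weyl-orbit count for $a\omega_1+b\omega_n$ together with the weight-string observation that $\omega-\alpha_1=(a-2)\omega_1+b\omega_2$ is a weight of $V$, which rules out $a\omega_1$ with $a\ge2$. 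Without some version of these steps your reduction to a finite list is not established. The rest of your outline (orthogonal-versus-symplectic discrimination, extra care at $p=2$, lifting via Corollary~\ref{cor:lift} and the isogeny $\SO(2n+1)\to\Sp(2n)$, the conjugacy-class footnotes) matches the paper and is fine.
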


\begin{proof} 
In view of Lemma \ref{lemma5}, 
we may assume that $H$ is simple. Let
$\omega$ be the highest weight of $H$ in the defining representation
$V$ of $G$. If $p >0$, recall that $\omega$ is called
\emph{$p$-restricted} if $\langle\omega,\alpha^\vee\rangle<p$ for all simple roots
$\alpha$ of $H$. In any case, there is a unique expansion
\[
\omega=\sum_{i=0}^mp^i\omega^{(i)}\text{ with }\omega^{(m)}\ne0,
\]
where each $\omega^{(i)}$ is $p$-restricted. We may assume that
$\omega^{(0)}\ne0$, as well, since otherwise $H\to
G$ factors through a Frobenius morphism. Steinberg's Tensor Product
Theorem asserts that
\[
V=V_0\otimes\cdots\otimes V_m.
\]
where $V_i$ is simple with highest weight $p^i\omega^{(i)}$. If
$m\ge1$, then the embedding $H\to G$ factors through one of the
subgroups in Lemma \ref{lemma4}. It follows easily that $G=\SO(4)$ and
$H=\Delta_q\SL(2)$ for $q=p^m>1$.

Thus, we may assume from now on that $\omega=\omega^{(0)}$ is
$p$-restricted. The inequality \eqref{eq:2} implies the following
upper bounds on $\dim V$:
\begin{equation}
\label{table:3}
\dim V\le
\begin{cases}
\sqrt2\,\sqrt{\dim H+{1/8}}+{1\over2}&\text{if }G=\SL(n),\\
2\,\sqrt{\dim H}+1&\text{if }G=\SO(2n+1),\\
2\,\sqrt{\dim H+{1/4}}+1&\text{if }G=\SO(2n),\\
2\,\sqrt{\dim H}&\text{if }G=\Sp(2n).\\
\end{cases}
\end{equation}
Thus,
\begin{equation*}
\dim V\le2\,\sqrt{\dim H+{1/4}}+1
\end{equation*}
in all cases. Now we use the trivial dimension estimate
$\dim V \ge|W_H \cdot \omega|$ to conclude that $H$ is one of
the groups in Table \eqref{eq:4}
and $\omega$ is a linear combination of the
fundamental weights in the right hand column of the same table.

Assume first that $\omega$ is not a fundamental weight. Then we claim
that the inequalities in \eqref{table:3} leave only two cases to
consider, namely $(H,\omega)=(\A_1,2\omega_1)$ and
$(H,\omega)=(\A_1,3\omega_1)$.

For groups of small rank (${\rk}\,H\le4$ will do), this can be checked
using the tables of Lübeck~\cite{Lue}.  So let $\rk H\ge5$ and suppose
that $\omega$ is not a multiple of a fundamental weight. Then
according to Lemma \ref{lemma6}, $H=\A_n$ and
$\omega=a\omega_1+b\omega_n$ with $a,b\ge1$. In that case it is
readily checked that the Weyl group orbit of $\omega$ is too big.

Next, we consider the case where $\omega=a\omega_1$ with $2\le
a<p$. Then
\[
\omega':=\omega-\alpha_1=(a-2)\omega_1+b\omega_2
\]
is also a weight of $V$, where
$b=-\langle\alpha_1,\alpha_2^\vee\rangle >0$. But $\omega_2$ does not
occur in Table \eqref{eq:4} excluding this possibility. This finishes
the proof of the claim.

Finally, it remains to check whether the representations of $H$ with
highest weight $\omega$ define a proper spherical subgroup of a
classical group where $\omega$ is one of the fundamental weights of
Lemma \ref{lemma6} or one of the exceptional cases $(\A_1,2\omega_1)$
or $(\A_2,3\omega_1)$. To make it easier some remarks are in order:
Firstly, it clearly suffices to check the $\omega$ up to an
automorphism of $H$. Secondly, the representations $(\C_2,\omega_2)$
for $p=2$, $(\C_3,\omega_3)$ for $p=2$, and $(\G_2,\omega_2)$ for
$p=3$ factor through $(\C_2,\omega_1)$, $(\B_3,\omega_3)$, and
$(\G_2,\omega_1)$, respectively, and therefore they can be
omitted. The result is summarized in the following table:
\begin{table}[h!t]
\renewcommand{\arraystretch}{1.4}
\begin{equation*}
\label{eq:3}
  \begin{array}{lrccc}
H &\hspace{-10pt} weight & G=\SL &G=\SO  & G=\Sp \\
\hline\hline
\A_{n-1} \ (n\ge2)   & \omega_1 & = & -& -\\ 
\B_n \ (n\ge3,p\ne2 )&\omega_1 & \SO(2n{+}1)\subset\SL(2n{+}1) &= &-\\ 
\C_n \ (n\ge2)&\omega_1 &\Sp(2n) \subset\SL(2n) & - &=\\ 
\D_n \ (n\ge4)  & \omega_1 &\SO(2n) \subset\SL(2n) &= &\SO(2n) 
\mathop{\subset}\limits_{p=2}
\Sp(2n)\\ 
\A_1 \ (p\ne2)   &2\omega_1 &\SO(3) \subset\SL(3) &=&-\\ 
\A_1 \ (p\ne2,3) &3\omega_1 &\times &-&\times\\ 
\A_3&\omega_2 &\SO(6) \subset\SL(6) &=&\SO(6) 
\mathop{\subset}\limits_{p=2}
\Sp(6)\\ 
\A_4&\omega_2 &\times &- &-\\ 
\B_3&\omega_3 &\times &\Spin(7) \subset\SO(8)  & \Spin(7) 
\mathop{\subset}\limits_{p=2}
\Sp(8)\ \ \ \\ 
\C_2 \ (p\ne2) & \omega_2 &\SO(5) \subset\SL(5) & =&-\\ 
\C_3 \ (p\ne2) &\omega_3 &\times &- &\times\\ 
\G_2 & \omega_1 &\times & \ \ \ \ \G_2 \mathop{\subset}\limits_{p\ne2}\SO(7) & \ \ \ \  \G_2
\mathop{\subset}\limits_{p=2}
\Sp(6)\\ 
\G_2 \ (p\ne3) & \omega_2 & \times & \times & \times
\end{array}
\end{equation*}
\end{table}

Here the notation is as follows: ``$-$'' means that $H$ is not a subgroup
of $G$; ``$=$'' means that $H$ equals $G$, and  ``$\times$''
means that $H$ is not spherical in $G$ in all cases, because \eqref{eq:2} is
violated. 
\end{proof}

\section{$G$-Completely Reducible, Spherical 
Subgroups of Classical Groups}
\label{sec:cr}
Following Serre \cite{Ser}, we say that a subgroup $H$ of a reductive
group $G$ is {\it $G$-completely reducible} provided whenever $H$ is
contained in a parabolic subgroup $P$ of $G$, then it is contained in
a Levi subgroup of $P$.  Thanks to \cite[Prop.\ 4.1]{Ser}, a
$G$-completely reducible subgroup of $G$ is reductive.

Suppose $G$ is classical with natural module $V$.
Note that for $G = \SL(V)$ a subgroup $H$ of $G$ is $G$-completely reducible
if and only if $V$ is semisimple as an $H$-module, \cite[Ex.\ 3.2.2(a)]{Ser}.
If $p \ne 2$ then this also holds for 
$G = \SO(V)$ or $\Sp(V)$, \cite[Ex.\ 3.2.2(b)]{Ser}.
However, if $p = 2$, these two notions differ and for 
$G = \SO(V)$ or $\Sp(V)$ both implications may fail.
For example, if $p=2$, then $H=\SO(2n-1)$ is
$G$-completely reducible in $G=\SO(2n)$ (in fact, $H$ is not contained
in any proper parabolic subgroup) but $V$ is not a semisimple 
$H$-module for $n\ge2$.
See \cite[Ex.\ 3.45]{BMR}
for an example of a simple subgroup of $\Sp(V)$ 
which is semisimple on $V$ but not $G$-completely reducible
when $p = 2$.

In the following, we always assume that $G$ is strictly classical,
i.e., we exclude the case $G=\SO(n)$ when $p=2$ and $n$ is odd.

\begin{lemma}\label{lem:maxreducible}

  Let $G$ be a strictly classical group and $H\subset G$ be maximal
  among connected spherical, $G$-completely reducible subgroups. Then
  $H$ is contained in the following table:
  \renewcommand{\arraystretch}{1.3}
  \begin{equation}\label{eq:table2}
    \begin{array}{llll}
      H & G \\
      \hline\hline
      S(\GL(m)\times\GL(n))&\SL(m+n)&m\ge n\ge1\\ 
      \GL(n)&\SO(2n)&n\ge1\\ 
      \GL(n)&\Sp(2n)&n\ge1&p\ne2\\
      \Sp(2m)\times\Sp(2n)&\Sp(2m+2n)&m\ge n\ge1\\ 
      \SO(m)\times\SO(n)&\SO(m+n)&m\ge n\ge1&p\ne2\\
      \SO(2m)\times\SO(2n)&\SO(2m+2n)&m\ge n\ge1&p=2\\
      \SO(2n-1)&\SO(2n)&n\ge2&p=2
    \end{array}
  \end{equation}
\end{lemma}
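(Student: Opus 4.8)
The plan is to analyze a maximal connected spherical, $G$-completely reducible subgroup $H$ via its action on the natural module $V$. Since $H$ is $G$-completely reducible, $V$ decomposes as a direct sum of $H$-submodules; since $H$ is spherical (hence acts with only finitely many orbits), we will bound the number and type of summands. The first step is to reduce to the case that $V$ is $H$-multiplicity-free as a sum of isotypic pieces: if some isotypic component of $V$ had the form $W^{\oplus r}$ with $r\ge 2$ and $W$ nontrivial, then $H$ embeds diagonally into $\GL(W)^{\times ?}$ inside a larger classical group, and maximality together with Lemma~\ref{lem:subgroups} forces a tensor-type overgroup which must already appear in Lemma~\ref{lemma4}; inspecting that list shows such configurations never yield a \emph{maximal} spherical $G$-cr subgroup except those already recorded. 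So we may assume $V = V_1 \oplus \cdots \oplus V_k$ with the $V_i$ pairwise non-isomorphic irreducible $H$-modules (for $G=\SL(V)$), or with compatible pairings in the orthogonal/symplectic case.

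Second, I would bound $k$. For $G = \SL(V)$, maximality forces $H$ to be the full stabilizer $S(\GL(V_1)\times\cdots\times\GL(V_k))$ of the decomposition, so the subgroup in the table is $S(\prod_i \GL(n_i))\subseteq \SL(\sum n_i)$; then the dimension inequality \eqref{eq:2} of Lemma~\ref{lemma3} forces $k \le 2$ (a short computation: $\dim S(\GL(n_1)\times\cdots\times\GL(n_k)) = \sum n_i^2 - 1$ must be at least $\tfrac12((\sum n_i)^2 + (\sum n_i) - 2)\cdot\tfrac{?}{?}$, which fails for $k\ge 3$), giving the first row. For $G = \SO(V)$ or $\Sp(V)$, the isotypic summands come in two flavors: those $V_i$ carrying a compatible (symmetric or alternating) $H$-invariant form, contributing a factor $\SO(V_i)$ or $\Sp(V_i)$, and those appearing in hyperbolic pairs $V_i \oplus V_i^*$, contributing a factor $\GL(V_i)$. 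Maximality makes $H$ the full product of these; the dimension bound \eqref{eq:2}, applied exactly as in the computations in Lemma~\ref{lemma:SOSP2}, then forces at most two nontrivial factors, and when there are two they cannot both be of $\GL$ type, nor can a $\GL$ factor coexist with an $\SO$/$\Sp$ factor of positive dimension (again by \eqref{eq:2}), leaving precisely the rows $\GL(n)\subset\SO(2n)$, $\GL(n)\subset\Sp(2n)$, $\Sp(2m)\times\Sp(2n)\subset\Sp(2m+2n)$, and $\SO(m)\times\SO(n)\subset\SO(m+n)$.

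Third, I would handle the characteristic-$2$ subtleties separately, since there $G$-complete reducibility is strictly weaker than semisimplicity of $V$ and the orthogonal group has the extra feature that $\SO(2n-1)\subset\SO(2n)$ lies in no proper parabolic. Here one uses Remark~\ref{rem:isogeny}(ii) to assume $G$ strictly classical, notes that a $G$-cr subgroup acting with an odd-dimensional orthogonal summand cannot be refined further by a parabolic, and checks that the only new maximal possibilities are $\SO(2n-1)\subset\SO(2n)$ and, when all orthogonal summands are even-dimensional, $\SO(2m)\times\SO(2n)\subset\SO(2m+2n)$; the $\GL(n)\subset\Sp(2n)$ row is excluded for $p=2$ because then $\GL(n)$ is not $G$-cr in $\Sp(2n)$ (it is contained in the Siegel parabolic with no Levi containing it, or alternatively $\Sp(2n)\cong\SO(2n+1)$ is not strictly classical). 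The main obstacle, I expect, will be this last point: carefully justifying which reducible subgroups are genuinely $G$-completely reducible in characteristic $2$ — in particular disentangling $G$-complete reducibility from semisimplicity of $V$ for the orthogonal and symplectic groups — rather than the dimension counting, which is routine given Lemmas~\ref{lemma3} and~\ref{lemma:SOSP2}.
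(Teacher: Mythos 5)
Your overall strategy (decompose the natural module, identify $H$ with the full stabilizer of the decomposition, then prune by dimension count) is genuinely different from the paper's, which instead picks a \emph{minimal} nonzero $H$-invariant subspace $U\subseteq V$ and splits into two cases: if $U$ is isotropic (or $G=\SL(V)$) its stabilizer is parabolic, so $G$-complete reducibility plus maximality forces $H$ to be a spherical Levi subgroup, and these are read off Krämer's list via Corollary~\ref{cor:lift}; if $U$ is anisotropic, minimality forces $U\cap U^{\perp}=0$ and hence $H\subseteq\Sp(2m)\times\Sp(2n-2m)$ or $\SO(m)\times\SO(n-m)$, with the one genuine $p=2$ subtlety that $q|_U$ may be the square of an $H$-invariant linear form, which forces $\dim U=1$ and $H\subseteq\SO(2n-1)$. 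Your route could in principle work, but as written it has a real gap: the inequality \eqref{eq:2} is \emph{not} strong enough to force at most two blocks. For instance $S(\GL(10)\times\GL(1)\times\GL(1))\subset\SL(12)$ has dimension $101>66=\dim G/B$, and $\Sp(2m)\times\Sp(2)\times\Sp(2)\subset\Sp(2m+4)$ satisfies \eqref{eq:2} for every $m\ge1$; neither is spherical, but your dimension count does not detect this. Likewise your claim that a $\GL$ factor cannot coexist with a symplectic factor ``by \eqref{eq:2}'' is false: $\Gm\times\Sp(2n-2)\subset\Sp(2n)$ satisfies \eqref{eq:2} and is in fact spherical --- it is excluded from the table only because it is not \emph{maximal}. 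To finish your argument you would still need Krämer's classification of spherical Levi subgroups (via lifting to characteristic $0$), which is exactly the input the paper uses; the dimension bound alone cannot replace it.

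Two further points need repair. First, your exclusion of $\GL(n)\subset\Sp(2n)$ for $p=2$ is based on a false assertion: $\GL(n)$ \emph{is} a Levi subgroup of the Siegel parabolic, hence automatically $G$-completely reducible, and $\Sp(2n)$ is strictly classical by the paper's convention (the exceptional isogeny goes $\SO(2n+1)\to\Sp(2n)$, so it is $\SO(2n+1)$ that is excluded, not $\Sp(2n)$). The correct reason is again non-maximality: for $p=2$ one has $\GL(n)\subset\SO(2n)\subsetneq\Sp(2n)$ with $\SO(2n)$ spherical and $G$-irreducible in $\Sp(2n)$. Second, you assert the row $\SO(2n-1)\subset\SO(2n)$ for $p=2$ but give no argument for why no other odd-dimensional orthogonal summand can occur; the paper's proof of this point (minimality of $U$ forces $q|_U=\ell^2$ with $\ell$ an injective $H$-invariant linear form, hence $\dim U=1$) is short but essential and should not be omitted. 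The reduction to multiplicity-free isotypic decompositions in your first step is also only sketched and would need Lemma~\ref{lemma4} applied with care, but that is a lesser issue than the dimension-count gap.
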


\begin{proof}

  Let $\omega$ be the defining symplectic form of $\Sp(2n)$ and let
  $q$ be the defining quadratic form of $\SO(n)$.

  Choose a non-zero $H$-invariant subspace $U\subseteq V$ of minimal
  dimension. If $G=\SL(n)$ or $U$ is isotropic, i.e., $\omega|_U=0$ in
  case $G=\Sp(2n)$ and $q|_U=0$ in case $G=\SO(n)$, then the stabilizer of
  $U$ is a parabolic subgroup $P$ of $G$. The $G$-complete reducibility
  of $H$ implies that $H$ is contained in a Levi complement $L$ of
  $P$. 
Since $H$ is spherical, so is $L$, by Lemma \ref{lem:subgroups}. 
The maximality of $H$ implies $H=L$. 
Since all Levi subgroups lift to
  characteristic zero, it is easy to derive a list of spherical Levi
  subgroups from Krämer's list (cf.~\cite[Thm.\ 4.1]{Br}). One checks
  that all of them are contained in the table except for
  $\GL(n)\subset\SO(2n+1)$ and $\Gm\times\Sp(2n-2)\subset\Sp(2n)$
  which are not maximal.

  Now assume that $U$ is anisotropic. Then, in particular,
  $G\ne\SL(V)$.

  If $G=\Sp(2n)$, then $U\cap U^\perp\subsetneq U$ and therefore,
  $U\cap U^\perp=0$ by minimality of $U$. This means that $U$ is
  completely anisotropic and that $H$ is contained in a conjugate of
  $\Sp(2m)\times\Sp(2n-2m)$. The same reasoning works for $p\ne2$ and
  $G=\SO(n)$.

  So let $G=\SO(2n)$ and $p=2$. Then the associated bilinear form
  \[
  \omega_q(u,v)=q(u+v)-q(u)-q(v)
  \]
  is actually a symplectic form on $V$.  Again, if $U\cap
  U^{\perp_{\omega_q}}=0$, then $U$ is necessarily even dimensional
  and $H\subseteq\SO(2m)\times\SO(2n-2m)$.  But there is another
  possibility: $U$ is isotropic with respect to $\omega_q$ but
  $q|_U\ne0$.  Then $\omega|_U=0$ implies $q|_U=\ell^2$ where $\ell$ is
  an $H$-invariant linear form on $U$. By minimality of $U$ we have
  $\ker\ell=0$ and therefore $\dim U=1$.  Thus $H$ is a subgroup of
  $\SO(U^{\perp_{\omega_q}})\cong\SO(2n-1)$.
\end{proof}

\begin{corollary}
\label{cor:maximal}

Let $G$ be strictly classical and $H\subset G$ be a subgroup
  which is maximal among connected spherical $G$-completely reducible
  proper subgroups. Then either

\begin{itemize}
\item[(i)] $H$ is a maximal irreducible subgroup in
  Table~(\ref{eq:table1}), or

\item[(ii)] $H$ is contained in Table~(\ref{eq:table2}).

\end{itemize}

\end{corollary}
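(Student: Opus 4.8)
The plan is to distinguish two cases according to whether the natural module $V$ of $G$ is irreducible as an $H$-module, feeding the irreducible case into Lemma~\ref{lemma7} and the reducible case into Lemma~\ref{lem:maxreducible}.

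First I would suppose that $V$ is irreducible as an $H$-module. Then $H$ is a proper, connected, irreducible, spherical subgroup of the classical group $G$, so Lemma~\ref{lemma7} shows that, up to conjugacy, $(G,H)$ occurs in Table~\eqref{eq:table1}. It then remains to rule out the rows flagged ``not maximal for $p=2$'', and this is exactly where the maximality hypothesis enters. By Lemma~\ref{lem:subgroups}, any connected reductive subgroup $H'$ with $H\subsetneq H'\subsetneq G$ is automatically spherical in $G$; so it is enough to produce, for each flagged row, such an $H'$ that is moreover $G$-completely reducible. For $\SO(n)\subset\SL(n)$ with $p=2$ (which forces $n$ even) one takes $H'=\Sp(n)$, and for $\Spin(7)\subset\Sp(8)$ with $p=2$ one takes $H'=\SO(8)$. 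In each instance $H'$ acts irreducibly on $V$, hence is contained in no proper parabolic of $G$ and is therefore $G$-completely reducible, contradicting the maximality of $H$. Consequently $H$ is one of the unflagged, i.e.\ maximal, entries of Table~\eqref{eq:table1}, which is alternative~(i).

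Next I would suppose that $V$ is reducible as an $H$-module. In this case Lemma~\ref{lem:maxreducible} applies to $H$ directly --- its hypothesis is precisely maximality of $H$ among connected spherical $G$-completely reducible (proper) subgroups --- and it puts $H$ into Table~\eqref{eq:table2}, which is alternative~(ii). One should note here that the argument of Lemma~\ref{lem:maxreducible} begins with a \emph{minimal} nonzero $H$-invariant subspace $U\subseteq V$ and becomes informative precisely when $U\subsetneq V$, i.e.\ when $V$ is $H$-reducible; in the irreducible case it is superseded by the previous paragraph.

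The only genuine work beyond quoting the two lemmas lies in the irreducible case: one has to know exactly which rows of Table~\eqref{eq:table1} are maximal and to check that the remaining rows are dominated by a larger proper subgroup \emph{within the class of connected spherical $G$-completely reducible subgroups}. The subtlety peculiar to positive characteristic is that, for $p=2$, an irreducible subgroup of $\Sp(V)$ or $\SO(V)$ need not be $G$-completely reducible (as recalled before Lemma~\ref{lem:maxreducible}), so the $G$-complete reducibility of the intermediate group $H'$ cannot be taken for granted; it is instead deduced from the irreducibility of its action on $V$ together with the nondegeneracy of the defining form, which prevents $H'$ from lying in a proper parabolic. Everything else is immediate from Lemmas~\ref{lemma7} and~\ref{lem:maxreducible}.
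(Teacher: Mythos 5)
Your proof is correct and follows exactly the route the paper intends: the corollary is stated without proof as the immediate combination of Lemma~\ref{lemma7} (when $V$ is $H$-irreducible) and Lemma~\ref{lem:maxreducible} (when it is not), which is precisely your dichotomy. Your additional care in the irreducible case --- using Lemma~\ref{lem:subgroups} together with the fact that an irreducible subgroup lies in no proper parabolic, hence is automatically $G$-completely reducible, to eliminate the rows of Table~(\ref{eq:table1}) flagged as non-maximal for $p=2$ --- correctly supplies the detail the paper leaves implicit in its footnotes.
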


In the final lemmas of this section, we classify all spherical
$G$-completely reducible subgroups of the classical groups.

\begin{lemma}
\label{lem:lem22}
Let $G = \SL(n)$ with $n \ge 2$ and let $H \subset G$ be spherical,
$G$-completely reducible and reducible.  Then $H$ is
listed in Table \ref{table:sp-classical}.
\end{lemma}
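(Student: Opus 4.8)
The plan is to reduce the problem to the classification of maximal spherical $G$-completely reducible subgroups already obtained in Corollary~\ref{cor:maximal}, together with the ``transitivity'' Lemma~\ref{lem:subgroups} and the direct-product compatibility Lemma~\ref{lem:products}. Since $G=\SL(n)$, a subgroup $H$ is $G$-completely reducible if and only if the natural module $V$ is semisimple as an $H$-module; as $H$ is moreover assumed reducible, $V$ decomposes nontrivially into isotypic $H$-summands. First I would embed $H$ in a maximal connected spherical $G$-completely reducible proper subgroup $M$ of $G$; such an $M$ is still spherical by Lemma~\ref{lem:subgroups}, and by Corollary~\ref{cor:maximal} either $M$ is one of the irreducible subgroups in Table~(\ref{eq:table1}) or $M$ appears in Table~(\ref{eq:table2}). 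If $H$ is reducible it cannot be contained in an irreducible $M$ (irreducibility of $V$ for $M$ would force $H$ to be reducible only if $H$ is proper in $M$, but then $H$ could still be reducible — so here one must be a little careful, see below), so the relevant maximal overgroups are the reducible ones from Table~(\ref{eq:table2}), which for $G=\SL(n)$ means $M=S(\GL(m)\times\GL(n-m))$ with $m\ge n-m\ge 1$.

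Next I would analyze which connected reductive subgroups $H$ of $M=S(\GL(m)\times\GL(\ell))$ (writing $\ell=n-m$) are spherical in $G$. Again by Lemma~\ref{lem:subgroups}, $H$ must be spherical in $M$; and conversely combining sphericality data for $M$ inside $G$ with sphericality of $H$ inside $M$. The key point is that $H$ projects to subgroups $H_1\subseteq\GL(m)$ and $H_2\subseteq\GL(\ell)$, and since $V=k^m\oplus k^\ell$ is semisimple over $H$ each $k^m$ and $k^\ell$ must itself be a semisimple $H_i$-module, so each $H_i$ is (up to the central torus) an irreducible or completely reducible subgroup of the corresponding $\GL$. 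I would then use the dimension bound from Lemma~\ref{lemma3} in the form $\dim H\ge \dim G/B=\frac12(n^2-1)$, combined with the known structure of spherical subgroups of $\SL(m)$ and $\SL(\ell)$ in lower rank (and the irreducible classification in Lemma~\ref{lemma7}), to force $H$ to be, up to conjugacy, one of $S(\GL(m)\times\GL(\ell))$ itself, or $\SL(m)\times\SL(\ell)$ with $m>\ell$, or one of the cases where one factor is a symplectic or special-orthogonal subgroup, i.e.\ the pairs $\Sp(2n)\subset\SL(2n)$, $\Gm\cdot\Sp(2n)\subset\SL(2n+1)$, $\Sp(2n)\subset\SL(2n+1)$, $\SO(n)\subset\SL(n)$ already occurring in Table~\ref{table:sp-classical}. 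The lifting Corollary~\ref{cor:lift} plus Krämer's list handles all cases that lift to characteristic zero, so that in practice one only needs to rule out putative extra cases in small characteristic using the inequality~\eqref{eq:2}.

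Concretely, the steps in order: (1) reduce to $H$ contained in a reducible maximal spherical $G$-cr subgroup $M=S(\GL(m)\times\GL(\ell))$ via Corollary~\ref{cor:maximal}, noting a reducible $H$ cannot be spherical through an irreducible maximal $M$ of Table~(\ref{eq:table1}) unless it is already one of the small exceptional items, which one checks directly; (2) decompose $H\hookrightarrow H_1\times H_2/(\text{finite})$ with $H_i$ acting semisimply on $k^m$, resp.\ $k^\ell$; (3) apply the transitivity lemma to get that $H$ is spherical in $M$ and $(H_1,H_2)$ essentially spherical in $\GL(m)\times\GL(\ell)$, invoking Lemma~\ref{lem:products}; (4) use the dimension inequality~\eqref{eq:2} and, where needed, the refined version~\eqref{eq:2refined} together with Theorem~\ref{thm:genstab} and Lemma~\ref{lemma:SOSP} to eliminate the factors that are too small; (5) for every surviving candidate that lifts to characteristic $0$, confirm sphericality by Corollary~\ref{cor:lift} and Krämer's list, and check that the $p=2$ considerations (Remark~\ref{rem:isogeny}) introduce no genuinely new items for $\SL(n)$; (6) read off that every surviving pair is listed in Table~\ref{table:sp-classical}.

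The main obstacle I anticipate is step (2)--(4): controlling the completely reducible but not irreducible subgroups $H_i\subseteq\GL(m)$ that can appear in a factor, since a priori $k^m$ could break into several $H_i$-isotypic pieces and one must bound all of them simultaneously against the single inequality $\dim H\ge\frac12(n^2-1)$. The saving grace is that the inequality is quite tight: as soon as a factor is replaced by a proper subgroup the dimension drops quadratically while $\dim G/B$ does not, so only very few configurations survive, and each can be matched against Krämer. A secondary subtlety is bookkeeping the two conjugacy classes of $\SO(n)\subset\SL(n)$ for $p=2$, $n$ odd (footnote $^{(1)}$ of Table~\ref{table:sp-classical}), which should be recorded but does not affect the count of cases.
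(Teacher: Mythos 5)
Your proposal follows essentially the same route as the paper: reducibility plus $G$-complete reducibility place $H$ inside a Levi $S(\GL(m)\times\GL(n-m))$, after which the factors are controlled by the lower-rank classification (the paper phrases this as induction on $\dim G$), liftability to characteristic zero together with Krämer's list decides sphericality, and the dimension inequality \eqref{eq:2} disposes of diagonal embeddings and of the cases where both factors are proper. The only streamlining worth noting is that your detour through Corollary~\ref{cor:maximal} (and the attendant worry about a reducible $H$ sitting inside an irreducible maximal $M$) is unnecessary: as you yourself observe, $G$-complete reducibility in $\SL(n)$ means $V$ is a semisimple $H$-module, so a reducible $H$ lands directly in the Levi stabilizing the decomposition, which is exactly how Lemma~\ref{lem:maxreducible} is used in the paper's proof.
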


\begin{proof}

  The assumptions on $H$ and Lemma \ref{lem:maxreducible} imply that
  $H \subseteq \Gm \cdot \SL(m) \cdot \SL(n-m)$, for $1 \le m \le
  n-1$.  We consider first the case $H = \Gm \cdot H_1 \cdot \SL(n-m)$
  where $H_1\subset\SL(m)$. Then, by induction on $\dim G$, we may
  assume that $H_1$ is contained in
  Table~\ref{table:sp-classical}. Then one checks that
  $H_1\subset\SL(m)$ lifts to characteristic zero. Hence $H$ is
  spherical if and only if it is in Krämer's list. This happens in a
  single case, namely $H=\Gm\times\Sp(n-1)\subset\SL(n)$ with $n\ge3$
  odd.

  By symmetry, we do not need to consider subgroups of the form
  $\Gm\cdot\SL(m) \cdot H_2$. Also no subgroups of the form
  $H=\Gm\cdot H_1\cdot H_2$ with $H_1\subset\SL(m)$ and
  $H_2\subset\SL(n-m)$ are spherical. Thus it remains to check
  $H=\Gm\cdot\SL(m)$ where the $\SL(m)$-factor is diagonally embedded
  into $\SL(m) \cdot \SL(m)$ with $n = 2m \ge 4$.  However, then $H$
  is not spherical, by \eqref{eq:2}.

  Finally, assume that $H = H'$ is semisimple.  Then $\Gm \cdot H'$ is
  one of the instances above.  As $H'$ lifts, it is contained in
  Krämer's table and thus covered by Corollary \ref{cor:lift}. The
  only cases of that form are $H=\SL(m)\cdot\SL(n-m)\subset\SL(n)$
  with $m\ne n-m$ and $H=\Sp(n-1)\subset\SL(n)$ with $n$ odd.
\end{proof}

\begin{lemma}\label{lem:lem23}

  Let $G = \Sp(2n)$ with $n \ge 2$ and let $H \subset G$ be spherical,
  $G$-completely reducible, and reducible.  Then $H$ is listed in
  Table \ref{table:sp-classical}.

\end{lemma}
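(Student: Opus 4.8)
The plan is to mimic the strategy of Lemma~\ref{lem:lem22}, using the maximality list of Lemma~\ref{lem:maxreducible} together with Lemma~\ref{lem:subgroups} and the lifting results, but with extra care for $p=2$ where the list of maximal reducible $G$-completely reducible subgroups of $\Sp(2n)$ also contains $\SO(2m)\times\Sp(2n-2m)$.

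Here are the steps I would carry out, in order. First, by Lemma~\ref{lem:maxreducible} (applied to $\Sp(2n)$) and the transitivity of sphericality, $H$ is contained in one of the maximal reducible $G$-completely reducible subgroups: $\GL(n)\subset\Sp(2n)$ (only for $p\ne2$), $\Gm\times\Sp(2n-2)\subset\Sp(2n)$, $\Sp(2m)\times\Sp(2n-2m)\subset\Sp(2m+2n')$ with $m\ge n-m\ge1$, or, when $p=2$, also $\SO(2m)\times\Sp(2n-2m)\subset\Sp(2n)$. I treat these containing groups one at a time. Second, for $H\subseteq\Sp(2m)\times\Sp(2n-2m)$ with both factors nontrivial, I split $H=H_1\cdot H_2$ and apply Lemma~\ref{lem:products}: by induction on $\dim G$, each $H_i$ is spherical in $\Sp(2m_i)$, hence listed in Table~\ref{table:sp-classical}; but then each $H_i$ lifts to characteristic zero, so $H=H_1\times H_2$ lifts, and by Corollary~\ref{cor:lift} and Krämer's list the only surviving case is $H=\Sp(2m)\times\Sp(2n-2m)$ itself (a reductive subgroup has no reductive subgroup of codimension one in a semisimple group, so if $H_i\subsetneq\Sp(2m_i)$ in either slot, sphericality already fails by the dimension bound \eqref{eq:2}, after checking the few small cases by hand). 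Third, the case $H\subseteq\Gm\times\Sp(2n-2)$ is handled similarly, giving $H=\Gm\times\Sp(2n-2)$, $H=\Sp(2n-2)$, or $H$ a proper spherical subgroup of $\Sp(2n-2)$ extended by the central torus --- all of which lift and appear in Krämer's table. Fourth, for $p\ne2$ and $H\subseteq\GL(n)$, the diagonal torus plus a factor is handled by the same lifting argument, noting $\GL(n)$ itself is on the list and anything smaller fails \eqref{eq:2} unless it is again on Krämer's list; and one must dispose of the diagonally embedded $\Sp(2m)\subset\Sp(2m)\times\Sp(2m)$-type subgroups, which fail \eqref{eq:2}.

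The genuinely new work is the case $p=2$ with $H\subseteq\SO(2m)\times\Sp(2n-2m)$. Here I again write $H=H_1\cdot H_2$ with $H_1\subseteq\SO(2m)$ and $H_2\subseteq\Sp(2n-2m)$, and the containing group $\SO(2m)\times\Sp(2n-2m)$ is itself in Table~\ref{table:sp-classical} (the additional $p=2$ row $\SO(2m)\times\Sp(2n)\subset\Sp(2m+2n)$). If $H_2\subsetneq\Sp(2n-2m)$, then (by induction and Lemma~\ref{lem:subgroups}) $H_2$ is spherical in $\Sp(2n-2m)$ and so is listed, hence lifts, and one checks via \eqref{eq:2} --- or more precisely via Lemma~\ref{lemma:SOSP2}(ii), which is tailored to exactly this situation --- that $\SO(2m)\times H_2$ cannot be spherical in $\Sp(2n)$ unless $H_2=\Sp(2n-2m)$; this is where Corollary~\ref{cor:spin7} and Proposition~\ref{prop:g2} get invoked to rule out the tempting candidates $H_1\subseteq\SO(2m)$ irreducible small (like $\Spin(7)\subset\SO(8)$ or $\G_2\subset\SO(6)$). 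Symmetrically, if $H_1\subsetneq\SO(2m)$ is reducible, one reduces $m$; if $H_1\subsetneq\SO(2m)$ is irreducible, Lemma~\ref{lemma:SOSP2}(i) forces $2m> 2n$, absurd unless $m=n$ and $H_2$ trivial, and then $H_1\subset\SO(2n)\subset\Sp(2n)$ which is handled by Lemma~\ref{lemma5} and Lemma~\ref{lemma7} (yielding exactly the entries $\SO(2n)$, $\G_2\subset\Sp(6)$, $\Spin(7)\subset\Sp(8)$, $\G_2\times\Sp(2)\subset\Sp(8)$). Finally, if $H$ is semisimple one recovers it from the $\Gm$-extended cases already treated, checking each lies in the table.

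The main obstacle I expect is the bookkeeping in the $p=2$, $\SO(2m)\times\Sp(2n-2m)$ case: unlike the characteristic-zero situation, the factor $\SO(2m)$ carries the genuinely new irreducible subgroups and the Frobenius-twist phenomena, so the inductive hypothesis must be applied carefully to \emph{both} slots, and one must be sure that every subgroup of $\SO(2m)$ or $\Sp(2n-2m)$ appearing really does lift (or is explicitly one of the finitely many $p=2$-only entries already established in Section~\ref{sec:special}) before Corollary~\ref{cor:lift} can be applied. Concretely, the estimates of Lemma~\ref{lemma:SOSP2}(ii) together with Corollary~\ref{cor:spin7} and Proposition~\ref{prop:g2} must be assembled to show that once a proper factor is present on the $\SO$ side, sphericality in $\Sp(2n)$ fails --- except precisely for the four entries $\Spin(7)\subset\Sp(8)$, $\G_2\subset\Sp(6)$, $\G_2\times\Sp(2)\subset\Sp(8)$, and $\SO(2m)\times\Sp(2n-2m)\subset\Sp(2m+2n-2m)$, which is the intended content of the lemma.
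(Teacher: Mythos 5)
Your skeleton is the paper's: reduce via Lemma~\ref{lem:maxreducible} to the maximal reducible $G$-completely reducible subgroups, induct on $\dim G$ in each factor, settle the liftable cases by Corollary~\ref{cor:lift} and Krämer's list, and dispose of the genuinely positive-characteristic candidates with Lemma~\ref{lemma:SOSP2}(ii), Corollary~\ref{cor:spin7}, Proposition~\ref{prop:g2}, and the dimension bound \eqref{eq:2} (including for diagonal embeddings). All the right tools appear, and your final list of survivors is correct.

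There is, however, a concrete leak in your case division. First, Lemma~\ref{lem:maxreducible} does \emph{not} list $\SO(2m)\times\Sp(2n-2m)$ (nor $\Gm\times\Sp(2n-2)$) among the maximal subgroups for $G=\Sp(2n)$; for $\Sp$ the only entries are $\GL(n)$ (for $p\ne2$) and $\Sp(2m)\times\Sp(2n-2m)$. That by itself is harmless redundancy, but it interacts badly with your second step, where you assert that every $H_i$ appearing in Table~\ref{table:sp-classical} lifts to characteristic zero. That is false for $p=2$: the entries $\SO(2m)\subset\Sp(2m)$, $\G_2\subset\Sp(6)$, $\Spin(7)\subset\Sp(8)$, and $\G_2\times\Sp(2)\subset\Sp(8)$ do not lift as subgroups of the symplectic group. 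You try to recover these through the containment $H\subseteq\SO(2m)\times\Sp(2n-2m)$, but $\G_2\not\subset\SO(6)$ (compare dimensions: $14$ versus $15$, and $\SO(6)\sim\SL(4)$ has no such subgroup), so the family $\G_2\times\Sp(2n-6)\subset\Sp(6)\times\Sp(2n-6)$ --- which contains the one genuinely new spherical pair $\G_2\times\Sp(2)\subset\Sp(8)$ --- is caught by neither branch of your argument as written. The paper avoids this by explicitly enumerating the non-liftable possibilities for $H_0\subset\Sp(2m)$ (namely $\SO(2l)\cdot\Sp(2m-2l)$, $\Spin(7)$, $\G_2$, $\G_2\cdot\Sp(2)$) and treating each directly; in particular the mixed case $\SO(2m)\cdot\SO(2n-2m)$ is eliminated not by Lemma~\ref{lemma:SOSP2}(ii) (whose hypotheses put the full $\Sp$-factor in the second slot) but by observing it is isogenous to the liftable, non-spherical subgroup $\SO(2m)\cdot\SO(2n-2m)\subset\SO(2n+1)$. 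Since you cite Proposition~\ref{prop:g2} and name $\G_2\times\Sp(2)\subset\Sp(8)$ among the survivors, the repair is routine, but the enumeration of non-liftable factors must be made explicit for the argument to close. (A minor scope remark: $\SO(2n)$, $\G_2\subset\Sp(6)$ and $\Spin(7)\subset\Sp(8)$ are \emph{irreducible} subgroups and belong to Lemma~\ref{lemma7}, not to this lemma.)
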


\begin{proof}
  The assumptions on $H$ and Lemma \ref{lem:maxreducible} imply that
 either  $H \subseteq \GL(n)$ or $H \subseteq \Sp(2m) \times \Sp(2n-2m)$ for
  $0 < m < n$.

  In the first instance the inequality \eqref{eq:2} shows $H =
  \GL(n)$. In the second case, we first consider subgroups of the form
  $H=H_0\times\Sp(2n-2m)$ with $H_0\subset\Sp(2m)$. Then, by induction
  on $\dim G$, we may assume that $H_0$ is contained in
  Table~\ref{table:sp-classical}. Moreover, if $H_0\subset\Sp(2m)$
  lifts to characteristic zero, then $H$ is spherical if and only if
  it is in Krämer's list. One checks that there is a single case of
  that form, namely $H=\Gm\times\Sp(2n-2)$.

  Next, we consider those $H_0\subset\Sp(2m)$ which do not
  lift. This means $p=2$ and we have to deal with the following cases:

  (i) $H=\SO(2l)\cdot\Sp(2m-2l)\cdot\Sp(2n-2m)$ with $1\le l\le m<
  n$. Then $H$ is contained in the liftable subgroup
  $\Sp(2l)\cdot\Sp(2m-2l)\cdot\Sp(2n-2m)$ which is spherical if and
  only if one of the factors is trivial. Thus, for $H$ to be
  spherical, we need $l=m$. In that case, $H$ is indeed spherical,
  since then $H\subset G$ is isogenous to the liftable subgroup
  $\SO(2m)\cdot\SO(2n-2m+1)\subset\SO(2n+1)$.

  (ii) $H=\Spin(7)\cdot\Sp(2n-8)$ with $n\ge 5$ is never spherical, by
  Corollary \ref{cor:spin7}.

  (iii) $H=\G_2\cdot\Sp(2n-6)$ with $n\ge 4$ is only spherical for
  $n=4$, by Corollary \ref{prop:g2}.

  (iv) $H=\G_2\cdot\Sp(2)\cdot\Sp(2n-8)$ with $n\ge5$ is never
  spherical, since it is contained in the non-spherical subgroup
  $\Sp(6)\cdot\Sp(2)\cdot\Sp(2n-8)$.

  Now we discuss groups of the form $H=H_1\cdot
  H_2\subset\Sp(2m)\cdot\Sp(2n-2m)$. It follows from the discussion
  above that $H_i$ is one of $\Gm\subset\Sp(2)$ for $p\ne2$,
  $\SO(2m)\subset\Sp(2m)$ for $p=2$, or $\G_2\subset\Sp(6)$ for
  $p=2$. This leads to the following possibilities for $H$:

(i) $p\ne2$ and $H=\Gm\cdot\Gm\subset\Sp(4)$ which is not spherical,
by \eqref{eq:2}.

(ii) $p=2$ and $H=\SO(2m)\cdot\SO(2n-2m)$ with $1\le m<n$. In this
    case $H\subset G$ is isogenous to the liftable and non-spherical
    subgroup $\SO(2m)\cdot\SO(2n-2m)\subset \SO(2n+1)$.

(iii) $p=2$ and $H=\G_2\cdot\SO(2)\subset\Sp(8)$ which is not
    spherical, by \eqref{eq:2}.

  Finally, a general subgroup $H$ is obtained from a group of the form
  $H_1\cdot H_2$ by replacing one or several isogenous factors by
  diagonal subgroups. This is only possible in the following cases:

(i) $H\subset\Sp(2m)\cdot\Sp(2m)\subseteq\Sp(4m)$ with $m\ge1$. Then
$H$ is not spherical, by \eqref{eq:2}.

(ii) $H\subset\SO(4)\cdot\Sp(2)\subset\Sp(6)$. Again, $H$ is not
spherical, by \eqref{eq:2}.

  This finishes the proof of the lemma.
\end{proof}

\begin{lemma}\label{lem:lem24}

  Let $G = \SO(n)$, $n\ge2$ (with $n$ even if $p=2$) and let $H
  \subset G$ be spherical, $G$-completely reducible, and reducible.
  Then $H$ is listed in Table \ref{table:sp-classical}.

\end{lemma}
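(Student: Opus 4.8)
The plan is to follow the template of Lemmas~\ref{lem:lem22} and~\ref{lem:lem23}: first reduce, by re-running the argument in the proof of Lemma~\ref{lem:maxreducible} for the given $H$, to the situation in which $H$ is contained in one of a handful of large reducible $G$-completely reducible subgroups of $G=\SO(n)$; then proceed by induction on $\dim G$, disposing of the liftable configurations via reduction mod $p$ (Corollary~\ref{cor:lift} and Krämer's list) and of the rest by the results of Section~\ref{sec:special} together with the bound \eqref{eq:2}.

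Concretely, I would pick a nonzero $H$-invariant subspace $U\subsetneq V$ of minimal dimension and, as in the proof of Lemma~\ref{lem:maxreducible}, split according to whether $U$ is totally singular, nondegenerate, or (for $p=2$) singular for the bilinear form but not for the quadratic form. Using $G$-complete reducibility --- so that in the totally singular case $H$ lies in a Levi subgroup of $\mathrm{Stab}_G(U)$, which by Lemma~\ref{lem:subgroups} must be spherical, hence of the shape below by inspection of the spherical Levi subgroups of $\SO(n)$ --- one obtains that, up to conjugacy, $H$ is contained in (a) $\GL(n/2)\subset\SO(n)$ when $n$ is even; (b) $\SO(a)\times\SO(b)\subset\SO(n)$ with $a+b=n$, $a\ge b\ge1$, and $a,b$ even if $p=2$; or (c) $\SO(n-1)\subset\SO(n)$ when $p=2$ (so $n$ is even). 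In case~(a), $H$ is assembled from a subgroup of $\SL(n/2)$ which is classified by Lemmas~\ref{lem:lem22} and~\ref{lemma7}; all of these lift, so by Corollary~\ref{cor:lift} the spherical ones are exactly those in Krämer's list, namely $\GL(n/2)\subset\SO(n)$ and, for $n/2$ odd, $\SL(n/2)\subset\SO(n)$. In case~(c), the exceptional isogeny $\SO(n-1)\xrightarrow{\sim}\Sp(n-2)$ in characteristic $2$, together with Lemma~\ref{lem:isogeny} and the transitivity of $G$-complete reducibility, identifies $H$ with a $\Sp(n-2)$-completely reducible spherical subgroup of $\Sp(n-2)$, classified inductively by Lemmas~\ref{lem:lem23} and~\ref{lemma7}; since $\dim\SO(n)/B>\dim\Sp(n-2)/B$, combining \eqref{eq:2} with Corollary~\ref{cor:lift} leaves only $\SO(n-1)\subset\SO(n)$ and, for $n=8$, $\G_2\subset\SO(8)$, both recorded in Table~\ref{table:sp-classical}.

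Case~(b) is the substantial one. Decomposing $H$ in Goursat fashion via its projections $H_1\subseteq\SO(a)$ and $H_2\subseteq\SO(b)$ (possibly fused along a common quotient), each $H_i$ is again completely reducible, hence by induction on $\dim G$ together with Lemma~\ref{lemma7} is known. For $H=H_0\times\SO(b)$ with $H_0\subsetneq\SO(a)$ proper, Lemma~\ref{lemma:SOSP2}(i) forces $a>b$, and the liftable $H_0$ then survive only inside Table~\ref{table:sp-classical} (e.g.\ $\SO(2)\times\Spin(7)\subset\SO(10)$); for $G$ strictly classical one checks that every relevant $H_0$ lifts to characteristic $0$. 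If $H$ fixes a nonzero anisotropic vector it lies in some $\SO(k)$ with $k<n$: for $k\le n-2$ this is impossible because $\SO(n-2)\subset\SO(n)$ is non-spherical (it is absent from Krämer's list and lifts, so Corollary~\ref{cor:lift} and Lemma~\ref{lem:subgroups} apply), while $k=n-1$ feeds back into $\SO(n-1)$ (by induction, for $p\ne2$) or into case~(c). The remaining $H=H_1\times H_2$ with both factors proper, together with their diagonal degenerations into $\SO(c)\times\SO(c)\subseteq\SO(2c)$, are eliminated by \eqref{eq:2}, whereas the full products $\SO(2l)\times\SO(2m)\subset\SO(2l+2m)$ for $p=2$ lift to the spherical characteristic-zero pair and lie in the table. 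Collecting the survivors reproduces exactly the $G=\SO$ rows of Table~\ref{table:sp-classical}.

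I expect the main obstacle to be the positive-characteristic bookkeeping: one must control all parities of $a$ and $b$, all Goursat fusions, and the recursion down to smaller orthogonal groups, and be certain nothing is overlooked. The genuinely non-formal ingredients are the non-sphericality of $\SO(n-2)\subset\SO(n)$, which truncates the recursion, and case~(c), where one must pass through the exceptional isogeny and then re-test sphericality inside the larger group $\SO(n)$.
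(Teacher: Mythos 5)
Your overall route --- the three-way case split coming from Lemma~\ref{lem:maxreducible}, induction on $\dim G$, lifting via Corollary~\ref{cor:lift}, and passing through the isogeny $\SO(n-1)\to\Sp(n-2)$ when $p=2$ --- is exactly the paper's. The problem is in the execution of cases (b) and (c), where you make claims that are false.

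The serious error is in case (c). You assert that feeding the output of Lemma~\ref{lem:lem23} for $\Sp(n-2)$ back into $\SO(n)$ and applying \eqref{eq:2} together with Corollary~\ref{cor:lift} ``leaves only $\SO(n-1)\subset\SO(n)$ and, for $n=8$, $\G_2\subset\SO(8)$''. It does not. The subgroups $\Sp(2a)\times\Sp(2b)\subset\Sp(n-2)$ with $a+b=\tfrac n2-1$ pull back to $\SO(2a+1)\times\SO(2b+1)\subset\SO(n-1)\subset\SO(n)$; these lift to the Krämer pair $\SO(2a+1)\times\SO(2b+1)\subset\SO(n)$ and hence \emph{are} spherical for every $a,b$ --- this is precisely how the rows $\SO(m')\times\SO(n')\subset\SO(m'+n')$ with $m',n'$ odd are realized when $p=2$, and they cannot appear in your case (b), which only allows even factors there. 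Your dimension count also does not do the work you assign to it: $\dim(\SO(3)\times\SO(3))=6=\dim\SO(6)/B$ and $\dim\Spin(7)=21>20=\dim\SO(10)/B$, so \eqref{eq:2} excludes neither the spherical $\SO(3)\times\SO(3)\subset\SO(6)$ nor the non-spherical $\Spin(7)\subset\SO(9)\subset\SO(10)$; the latter, like $\Gm\cdot\Sp(n-4)$ for large $n$ and $\SO(2l)\cdot\SO(n-2l-1)$, must be eliminated by lifting and invoking Theorem~\ref{Theorem1} in the negative direction, after which one must actually record the surviving family above. A second, smaller gap sits in case (b): your claim that ``every relevant $H_0$ lifts to characteristic $0$'' is false, since by induction $H_0=\Delta_q\SL(2)\subset\SO(4)$ is a legitimate candidate and does not lift; ruling out $\Delta_q\SL(2)\times\SO(n-4)\subset\SO(n)$ requires \eqref{eq:2} for $n=5,6,7$ and Lemma~\ref{lemma:SOSP2}(i) for $n\ge8$ (note $\dim\Delta_q\SL(2)=3<\dim\SO(4)$), which is exactly how the paper disposes of it.
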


\begin{proof}

  Thanks to Lemma \ref{lem:maxreducible}, either $H \subseteq \GL(m) \subset
  \SO(n)$, for $n=2m \ge 2$, or $H \subseteq \SO(m) \cdot \SO(n-m)
  \subset \SO(n)$ where $1\le m <n$. For $p=2$, we may assume in the
  latter case that either both $m$ and $n$ are even or that $n$ is
  even and $m=1$.

  Assume first $H\subseteq\GL(m)\subset\SO(2m)$. Then the dimension
  estimate \eqref{eq:2} implies that the codimension of $H$ in
  $\GL(n)$ is at most $n$. Thus, the codimension of
  $H_0=(H\cap\SL(n))^\circ$ in $\SL(n)$ is at most $n$, as well. The list
  of maximal spherical subgroups of $\SL(n)$ (see
  Corollary~\ref{cor:maximal}) shows that $H_0=\Gm\subset\SL(2)$. Thus, the
  only instance is $H=\SO(2)\cdot\SO(2)\subset\SO(4)$.

  Now we treat the case $H \subseteq \SO(m) \cdot \SO(n-m)$ for
  $p\ne2$ or $p=2$ and $m,n$ even. First let
  $H=H_0\cdot\SO(n-m)\subset\SO(n)$, where $H_0\subset\SO(m)$. By
  induction, we may assume that $H_0$ is contained in Table
  \ref{table:sp-classical}. If $H_0$ is liftable, then sphericality can
  be checked with Krämer's table. It turns out that there is no
  instance of this type. On the other hand, there is only one
  non-liftable case, namely
  $H=\Delta_q\SL(2)\cdot\SO(n-4)\subset\SO(n)$ with $n\ge5$ and
  $q=p^s>1$. None of these subgroups is spherical: use inequality
  \eqref{eq:2} for $n=5,6,7$ and Lemma \ref{lemma:SOSP2} for $n\ge8$.

  The remaining case to consider is where $H$ is obtained from $\SO(m)
  \cdot \SO(n-m)$ by replacing some isogenous factors by a diagonal
  subgroup. Then either $H\subset\SO(m)\cdot\SO(m)\subset\SO(2m)$ with
  $m\ge2$ or $H\subset\SO(3)\cdot\SO(4)\subset\SO(7)$. None of these
  subgroups can be spherical, by \eqref{eq:2}.

  Now we treat the case when $p=2$, $n=2d$ is even and $m=1$, i.e.,
  $H\subset\SO(2d-1)\subset\SO(2d)$. There is a bijective isogeny
  $\SO(2d-1)\to\Sp(2d-2)$ and all $G$-completely reducible, spherical
  subgroups of $\Sp(2d-2)$ are known, by Lemma \ref{lem:lem23}.  Thus
  we arrive at the following cases:

(i) $H=\GL(2d-1)\subset\SO(2d)$ lifts and is not spherical.

(ii) $H=\SO(2)\cdot\SO(2d-3)\subset\SO(2d)$ lifts and is not spherical.

(iii) $H=\SO(2l-1)\cdot\SO(2d-2l+1)\subset\SO(2d)$ lifts and is spherical for
  all $1\le l\le d$.

(iv) $H=\SO(2l)\cdot\SO(2d-2l-1)\subset\SO(2d)$ lifts and is not spherical.

(v) $H=\Spin(7)\subset\SO(10)$ lifts and is not spherical.

(vi) $H=\G_2\subset\SO(8)$ lifts and is spherical.

(vii) $H=\G_2\cdot\SO(3)\subset\SO(10)$ is not spherical, by \eqref{eq:2}.

This finishes the proof of the lemma.
\end{proof}

This concludes our classification of the spherical, 
$G$-completely reducible subgroups of strictly classical groups.

\section{$G$-Completely Reducible, Spherical 
Subgroups of Exceptional Groups}

Throughout this section let $G$ be a simple group of exceptional type.

\begin{lemma} 
\label{lemma8}
Let $G$ be a simple group of exceptional type and $H\subset G$ a
subgroup which is maximal among proper, connected, $G$-completely
reducible, spherical subgroups of $G$.  Then one of the following
holds:
\begin{align*}
& G=\E_6 \text{ and } H\in\{\A_1\A_5,\Gm\cdot\D_5,\F_4,\C_4\ (p\ne2)\};\\
& G=\E_7 \text{  and } H\in\{\A_1\D_6,\A_7,\Gm\cdot\E_6\};\\
& G=\E_8 \text{ and } H\in\{\A_1\E_7,\D_8\};\\
& G=\F_4 \text{ and } H\in\{\A_1\C_3\ (p\ne2),\B_4,\C_4\ (p=2)\};\\
& G=\G_2 \text{ and } H\in\{\A_1\widetilde\A_1,\A_2,\widetilde\A_2\ (p=3)\}.
\end{align*}
\end{lemma}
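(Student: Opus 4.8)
The plan is to reduce the classification of maximal connected $G$-completely reducible spherical subgroups of an exceptional group to the known list of maximal connected reductive subgroups and then cut down by the dimension bound \eqref{eq:2}. First I would invoke the fact, due to Liebeck--Seitz, that every connected $G$-completely reducible proper subgroup of an exceptional $G$ is reductive (this is already noted via Serre \cite{Ser}) and lies in a maximal connected reductive subgroup $M$, which is either a parabolic's Levi factor (excluded here since a $G$-cr subgroup sitting in a Levi of a parabolic would not be maximal among $G$-cr subgroups unless it equals such a Levi — but those are handled by the liftable argument below) or a maximal connected reductive subgroup from the well-documented tables of maximal subgroups of exceptional algebraic groups. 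So a maximal $G$-cr spherical $H$ is either a maximal-rank subsystem subgroup, a Levi subgroup, or one of the finitely many non-subsystem maximal connected reductive subgroups.

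Next I would apply Lemma \ref{lemma3}, i.e.\ the inequality $\dim H\ge\dim G/B=\frac12(\dim G-\rk G)$, to the finite list of candidate maximal reductive subgroups $M$ of each exceptional group. For $\G_2$ this means $\dim H\ge6$; for $\F_4$, $\dim H\ge24$; for $\E_6$, $\dim H\ge36$; for $\E_7$, $\dim H\ge63$; for $\E_8$, $\dim H\ge120$. Running through the Dynkin/Levi subsystem subgroups and the handful of exotic maximal subgroups (e.g.\ $\A_1$'s, $\G_2\C_3\subset\F_4$? no—$\A_1\G_2$, $\A_2\tilde\A_2\subset\F_4$? etc., $\A_2\A_2\A_2$, $\A_8$, $\G_2$, $\C_4$, $\A_1\A_1$ in $\G_2$, and so on), the bound eliminates all but a short list; for the survivors of maximal rank one reads off $\A_1\widetilde\A_1$ and $\A_2$ in $\G_2$, $\A_1\C_3$ and $\B_4$ in $\F_4$, $\A_1\A_5$, $\Gm\D_5$ (Levi) in $\E_6$, and so forth, matching the stated list. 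The characteristic-specific entries — $\widetilde\A_2\subset\G_2$ for $p=3$, $\C_4\subset\F_4$ for $p=2$ (from $\B_4$ via the exceptional isogeny), and $\C_4\subset\E_6$ for $p\ne2$ versus its absence for $p=2$ (where it is not $G$-cr, or where $\F_4$ intervenes) — I would handle by noting the special isogenies in those characteristics together with Lemma \ref{lem:isogeny}, and by invoking Corollary \ref{cor:lift} for the liftable cases so that their sphericality is simply read off Krämer's list.

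The one genuine input beyond bookkeeping is confirming that the subgroups surviving the dimension test are actually spherical and that nothing else is: for the liftable ones (which is all of them except the short-root $\widetilde\A_2\subset\G_2$ at $p=3$, the $\C_4\subset\F_4$ at $p=2$, and $\C_4$ vs.\ $\F_4$ subtleties in $\E_6$) this is Corollary \ref{cor:lift} plus inspection of \cite{Kr}; for the genuinely positive-characteristic cases one uses the non-central isogenies ($\B_n\leftrightarrow\C_n$ type phenomena in $\F_4$ when $p=2$, and the exceptional isogeny of $\G_2$ when $p=3$) to transport sphericality from a liftable partner via Lemma \ref{lem:isogeny}. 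I expect the main obstacle to be organizational rather than deep: assembling the correct list of maximal connected reductive subgroups of each exceptional $G$ in every characteristic (the Liebeck--Seitz tables, with their characteristic caveats) and making sure the dimension inequality is applied to each, including the Levi subgroups, without omission — the arithmetic itself is routine and can be left to the reader, but the case analysis must be exhaustive. The maximality hypothesis is what lets us stop at $M$ and not descend further, and the $G$-complete reducibility is exactly what guarantees $H$ is reductive and hence appears on those tables in the first place.
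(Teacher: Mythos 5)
Your proposal is correct and follows essentially the same route as the paper: reduce via $G$-complete reducibility and maximality to Levi subgroups, maximal-rank subsystem subgroups, and the Liebeck--Seitz list of maximal connected reductive subgroups, then cut down by the dimension bound \eqref{eq:2} and decide sphericality of the survivors through Corollary \ref{cor:lift} and Krämer's list, treating the $(\F_4,p=2)$ and $(\G_2,p=3)$ cases by the exceptional isogenies. The paper merely organizes the same argument by splitting on $\rk H=\rk G$ versus $\rk H<\rk G$ (showing in the latter case that maximality forces $H$ to be a maximal connected subgroup), which you capture in your parenthetical remarks.
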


\begin{proof}
  Assume first that $\rk H=\rk G$. If $p\ne2$ for $G=\F_4$ and $p\ne3$
  for $G=\G_2$ then $H$ is given by an additively closed subroot
  system. In particular, $H$ lifts to characteristic zero and the
  spherical cases can be read off of Krämer's list. Observe, that
  $H=\A_1\C_3$ in $G=\F_4$ is no longer maximal for $p=2$, since it is
  contained in a subgroup of type $\C_4$.

  Now suppose that $G=\F_4$ and $p=2$ or $G=\G_2$ and $p=3$, $\rk
  H=\rk G$, and that $H$ does not lift. Then the remaining
  possibilities for $H$ have been determined by Liebeck and Seitz,
  \cite[Table 10.4]{LieSei}, namely $(G,H)=(\F_4,\C_4)$ or
  $(G,H)=(\G_2,\widetilde \A_2)$. Using the inseparable isogeny of $G$
  in both cases, $H$ is mapped to a subgroup which lifts and is
  spherical. So $H$ itself is spherical in both instances.

  Finally, assume that $\rk H<\rk G$. Then we claim that $H$ is a
  maximal connected subgroup of $G$. Indeed, if $H$ were contained in
  a proper parabolic subgroup $P$  of $G$, then 
  the $G$-complete reducibility of $H$
  implies that $H$ lies in a Levi subgroup $L$ of $P$. Since $L$ is
  $G$-completely reducible as well, we get $H=L$, by maximality of
  $H$, contrary to our assumption on the rank of $H$. But $H$ cannot
  be a proper subgroup of a connected proper subgroup $K$ of $G$
  either, since $K$ would be $G$-completely reducible, hence also
  reductive. In fact, by the argument above, $K$ would not be contained
  in any proper parabolic subgroup of $G$. This finishes the proof of the
  claim.

  Now we know that $H$ is one of the subgroups of
  \cite[Table~1]{LieSei}. None of them is spherical for dimension
  reasons except for $(G,H)=(\E_6,\F_4)$ and $(G,H)=(\E_6,\C_4)\
  (p\ne2)$. In both cases, $H$ lifts and is spherical,
  cf.~\cite{springer} and \cite{Br}. Observe that for
  $p=2$, the group $H=\C_4$ is not maximal in $G=E_6$, 
since then it is contained in a subgroup of type
  $\F_4$.
\end{proof}

\begin{lemma}
\label{lem:e6f4}
  Let $G$ be a simple group of exceptional type and $H\subset G$ a
  proper, connected, non-maximal, $G$-completely reducible, spherical
  subgroup of $G$. Then one of the following holds:
  \begin{align*}
    & G=\E_6 \text{ and } H\in\{\D_5,\C_4\ (p=2)\}, \text{ or } \\
    & G=\F_4 \text{ and } H\in\{\A_1\B_3\ (p=2),\A_1\C_3\ (p=2)\}.
  \end{align*}
\end{lemma}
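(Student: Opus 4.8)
The strategy is to start from the classification of the maximal proper connected $G$-completely reducible spherical subgroups in Lemma~\ref{lemma8} and descend to the non-maximal ones by a case analysis, using the transitivity Lemma~\ref{lem:subgroups}: every non-maximal such $H$ is contained in some maximal one $M$ from Lemma~\ref{lemma8}, and moreover $H$ must be spherical inside $M$ and $G$-completely reducible. First I would organize the argument by the type of $G$ (the five lines of Lemma~\ref{lemma8}). For each maximal $M$ on that list, one enumerates the proper connected reductive subgroups $H\subset M$ that could still be spherical in $G$; the dimension bound \eqref{eq:2}, i.e.\ $\dim H\ge\dim G/B$, immediately kills the overwhelming majority of candidates, since $\dim G/B$ is large relative to $\dim M$ in the exceptional cases. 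This reduces the list of surviving $H$ to a very short one.

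Next I would treat the surviving candidates one at a time. The cases $H=\D_5\subset\E_6$ (contained in $\Gm\cdot\D_5$) and the $\F_4$-cases $H=\A_1\B_3$ and $H=\A_1\C_3$ for $p=2$ (contained in $\C_4\subset\F_4$ via the isogeny, or directly in $\B_4$) need a positive verification of sphericality. For these I would invoke the now-standard reduction-mod-$p$ technique: either the subgroup lifts to characteristic zero and appears (after possibly enlarging by a torus or central factor) in Krämer's list, so Corollary~\ref{cor:lift} applies, or—in the genuinely $p=2$ cases—one uses the inseparable/non-central isogeny of $G$ (e.g.\ $\F_4\to\F_4$ in characteristic $2$, or $\E_6$ analogues) to transport $H$ to a liftable spherical subgroup, exactly as in the proof of Lemma~\ref{lemma8} for $(\F_4,\C_4)$. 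The case $H=\C_4\subset\E_6$ for $p=2$ is handled the same way: $\C_4$ sits inside $\F_4\subset\E_6$, and $\F_4\subset\E_6$ is spherical, but $\C_4$ is no longer maximal there in characteristic $2$, so it belongs in this lemma rather than Lemma~\ref{lemma8}. Conversely, to show these are the \emph{only} non-maximal cases, I would check that no other subgroup chain $H\subset M\subset G$ with $M$ from Lemma~\ref{lemma8} produces a spherical $H$; here \eqref{eq:2} combined with the known maximal-subgroup structure of the $M$'s (reductive maximal subgroups of $\A_1\A_5$, $\Gm\cdot\D_5$, $\A_7$, $\A_1\D_6$, $\A_1\E_7$, $\D_8$, $\B_4$, $\A_1\C_3$, $\A_2$, etc., as in Liebeck–Seitz) does essentially all the work.

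I expect the main obstacle to be bookkeeping rather than any single hard step: one must be careful, for each maximal $M$, to enumerate all connected reductive subgroups whose dimension is at least $\dim G/B$ without missing any, and to distinguish $G$-completely reducible subgroups from merely reductive ones (the subtlety flagged at the start of Section~\ref{sec:cr} for $p=2$). A secondary subtlety is making sure that a subgroup placed in this lemma is genuinely non-maximal among $G$-completely reducible spherical subgroups and not already covered by Lemma~\ref{lemma8}; the $p=2$ phenomena ($\C_4$ ceasing to be maximal in $\E_6$, the type changes $\C_3\leftrightarrow\B_3$ in $\F_4$) are precisely where this distinction is delicate, so I would cross-check each such case against the isogeny argument used in Lemma~\ref{lemma8}. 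Once the finite list is pinned down, confirming sphericality reduces—via Corollary~\ref{cor:lift} and the isogeny trick—to a lookup in Krämer's table, so no intrinsic sphericality computation is needed.
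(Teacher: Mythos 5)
Your proposal is correct and follows essentially the same route as the paper: every non-maximal $G$-completely reducible spherical $H$ sits inside one of the maximal subgroups of Lemma~\ref{lemma8}, the dimension bound \eqref{eq:2} eliminates almost all candidates, and the survivors are settled by lifting to characteristic zero plus Krämer's list or, for the $p=2$ cases in $\F_4$, by the inseparable isogeny. The only organizational difference is that the paper first disposes of all equal-rank candidates at once (outside $\F_4$ with $p=2$ and $\G_2$ with $p=3$ they come from closed subroot systems, hence lift, and in Krämer's list every maximal-rank spherical subgroup is already maximal), which shortens the enumeration you would otherwise carry out subgroup by subgroup.
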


\begin{proof}
  Since $H$ is spherical in $G$, it satisfies the inequalities
  \[
  \rk H\le\rk G,\quad \dim H\ge{1\over2}(\dim G-\rk G).
  \]
  First we claim that, except for $G=\F_4$ and $p=2$ or $G=\G_2$ and
  $p=3$, we may assume that $\rk H<\rk G$. Indeed, otherwise $H$ lifts
  and would therefore be in Krämer's list. But one easily checks that
  there all maximal rank spherical subgroups are in fact maximal.

  Another constraint for $H$ is that it is a proper subgroup of one of
  the groups in Lemma \ref{lemma8}. It is now easy but 
  somewhat tedious to list
  all possible types for $H$ which match the requirements. We wind
  up with very few cases:

  (i) $G=\E_6$ and $H\in\{\D_5,\Gm\cdot\B_4,\ \B_4,\C_4\ (p=2)\}$: We
  claim that all subgroups of these types lift to characteristic 0:
  The subgroup $H=\D_5$ has to be the second factor in
  $\Gm\cdot\D_5$. Thus it lifts and is spherical. One checks that a
  group of type $\D_5$ contains a unique conjugacy class of subgroups
  of type $\B_4$, namely $\SO(9)\subset\SO(10)$. Thus $\Gm\cdot\B_4$
  inside $\Gm\cdot\D_5$ lifts and is not spherical. Also $H=\C_4$
  lifts, cf.\ \cite{Br} and is spherical. There are two possibilities
  for $H=\B_4$; either $H$ is inside $\Gm\cdot\D_5$ or else inside
  $\F_4$. In both cases, $H$ lifts and is not spherical.

  (ii) $G=\E_7$ and $H\in\{\D_6,\E_6\}$: Here $H=\D_6$ is normal in
  $\A_1\D_6$. Hence is lifts and is not spherical. Likewise, $H=\E_6$
  is normal in $\Gm\cdot\E_6$. Hence is lifts and is not spherical, as
  well.

  (iii) $G=\E_8$ and $H = \E_7$: Here $H=\E_7$ is normal in $\A_1\E_7$,
  hence it lifts and is not spherical.

  (iv) $G=\F_4$ and $H\in\{\A_1\C_3, \widetilde \A_1\B_3, \D_4\}$ and
  $p=2$: Let $H=\A_1\C_3$ or $H = \widetilde\A_1\B_3$. Without loss of
  generality we may assume that the positive root $\alpha$ in the
  $\A_1$-factor is a dominant weight of $\F_4$. Thus it is either the
  highest long or the highest short root. The roots orthogonal to
  $\alpha$ form a root system of type $\C_3$ or $\B_3$,
  respectively. Thus, $H=\A_1\C_3$ lifts to characteristic zero, while
  $H = \widetilde\A_1\B_3$ differs from the former by an inseparable
  isogeny of $\F_4$. So both are unique and spherical. There are two
  subgroups of type $\D_4$ corresponding to the two root subsystems
  consisting of all the long roots and all the
  short roots, respectively.  Stemming from a closed root subsystem,
  the first subgroup lifts, and thus so does the second, as it is
  obtained from the first by the isogeny of $G$. Thus, none of them is
  spherical.
\end{proof}

\section{Non-$G$-Completely Reducible, Reductive Spherical Subgroups}

Now we complete the classification by considering the
non-$G$-completely reducible subgroups of $G$. 
Throughout this section let 
$G$ be a connected reductive group over
$k$ and let $H\subseteq G$ be a non-$G$-completely reducible subgroup of $G$.
Then there exists a parabolic subgroup $P$ of $G$ containing $H$ so that
$H$ is in no Levi subgroup of $P$.
Indeed, there is a canonical such parabolic subgroup $P$ with this property only depending on $H$,
the so  called \emph{optimal destabilizing parabolic subgroup} 
associated with $H$, 
obtained by means of geometric invariant theory,  
cf.\ \cite[\S 5.2]{BMRT}.

It is convenient to use the characterization of parabolic subgroups of $G$
in terms of $1$-parameter subgroups of $G$, 
e.g.\ see \cite[2.1--2.3]{richardson} and 
\cite[Prop.\  8.4.5]{spr2}:

\begin{lemma} 
\label{lem:cochars} 
Given a parabolic subgroup $P$ of $G$ and any Levi subgroup $L$ of $P$,
there exists a $1$-parameter subgroup $\lambda$ of $G$ 
such that the following hold:
\begin{itemize}
\item[(i)]   $P = P_\lambda := \{g\in G \mid \underset{t\to 0}{\lim}\,
        \lambda(t) g \lambda(t)^{-1} \textrm{ exists}\}$.
\item[(ii)]  $L = L_\lambda := C_G(\lambda(\Gm))$.
\item[(iii)] The map $\pi = \pi_\lambda : P_\lambda \to L_\lambda$ defined by
\[
\pi_\lambda(g) := \underset{t\to 0}{\lim}\, \lambda(t)g \lambda(t)\inverse
\]
        is a surjective homomorphism of algebraic groups.
        Moreover, $L_\lambda$ is the set of fixed points of $\pi_\lambda$
        and $R_u(P_\lambda)$ is the kernel of $\pi_\lambda$.
\end{itemize}
\end{lemma}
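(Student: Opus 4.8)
Lemma~\ref{lem:cochars} is a standard structure-theory statement about parabolic subgroups, and the plan is to reduce it to the two references already cited. The starting point is that every parabolic subgroup $P$ of the connected reductive group $G$ arises as $P_\lambda$ for some cocharacter $\lambda\in Y(G)$: this is precisely the content of \cite[2.1--2.3]{richardson} and \cite[Prop.\ 8.4.5]{spr2}. So the first step is simply to invoke that fact to obtain some cocharacter $\lambda_0$ with $P=P_{\lambda_0}$. This already gives (i) for $\lambda_0$, and it gives a Levi decomposition $P_{\lambda_0}=L_{\lambda_0}\ltimes R_u(P_{\lambda_0})$ with $L_{\lambda_0}=C_G(\lambda_0(\Gm))$, together with the retraction $\pi_{\lambda_0}\colon P_{\lambda_0}\to L_{\lambda_0}$, $g\mapsto\lim_{t\to0}\lambda_0(t)g\lambda_0(t)^{-1}$, whose image is $L_{\lambda_0}$, whose fixed-point set is $L_{\lambda_0}$, and whose kernel is $R_u(P_{\lambda_0})$; that is (ii) and (iii) for $\lambda_0$.

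The one thing that is \emph{not} immediate from the cited references is that one can arrange $\lambda$ so that $L_\lambda$ equals the \emph{prescribed} Levi subgroup $L$ rather than merely some Levi subgroup of $P$. So the second step is the conjugacy of Levi subgroups: all Levi complements of $R_u(P)$ in $P$ are conjugate under $R_u(P)$ (e.g.\ \cite[Prop.\ 8.4.4]{spr2} or the standard theory of parabolic subgroups). Hence there is $u\in R_u(P)$ with $u\,L_{\lambda_0}\,u^{-1}=L$. Now set $\lambda:=u\cdot\lambda_0$, i.e.\ $\lambda(t)=u\,\lambda_0(t)\,u^{-1}$. Conjugation by a fixed element of $G$ carries the whole picture along equivariantly: $P_\lambda=u\,P_{\lambda_0}\,u^{-1}$, and since $u\in R_u(P)\subseteq P=P_{\lambda_0}$ we have $u\,P_{\lambda_0}\,u^{-1}=P_{\lambda_0}=P$, giving~(i); $L_\lambda=C_G(\lambda(\Gm))=u\,C_G(\lambda_0(\Gm))\,u^{-1}=u\,L_{\lambda_0}\,u^{-1}=L$, giving the first half of~(ii); and $\pi_\lambda(g)=\lim_{t\to0}\lambda(t)g\lambda(t)^{-1}=u\bigl(\lim_{t\to0}\lambda_0(t)(u^{-1}gu)\lambda_0(t)^{-1}\bigr)u^{-1}=u\,\pi_{\lambda_0}(u^{-1}gu)\,u^{-1}$, which is a composite of group homomorphisms and hence a surjective homomorphism $P_\lambda\to L_\lambda$, with fixed-point set $u\,\mathrm{Fix}(\pi_{\lambda_0})\,u^{-1}=u\,L_{\lambda_0}\,u^{-1}=L=L_\lambda$ and kernel $u\,R_u(P_{\lambda_0})\,u^{-1}=u\,R_u(P)\,u^{-1}=R_u(P)=R_u(P_\lambda)$. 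This yields~(iii) and the remaining assertions of~(ii).

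There is essentially no hard step here; the mild subtlety is bookkeeping the conjugation, in particular noting that the destabilizing cocharacter $\lambda_0$ and the conjugating element $u$ both live in $P$ (indeed $u\in R_u(P)$), so that the parabolic itself is unchanged and only the internal Levi decomposition is moved. If one prefers not to invoke the conjugacy of Levi subgroups as a black box, an alternative is to observe that $L$ is the centralizer of its own connected centre $Z:=Z(L)^\circ$, choose a cocharacter $\lambda_0$ with image in $Z$ such that $C_G(\lambda_0(\Gm))=L$ (possible because $Z$ is a torus and $L=C_G(Z)$ is a Levi subgroup of a parabolic in the standard correspondence), and then check directly that $P_{\lambda_0}$ is a parabolic with Levi $L$ and unipotent radical $R_u(P)$; comparing with the given $P$ and using that two cocharacters defining the same Levi and lying in the same parabolic differ by an element of $R_u(P)$, one again lands on the stated conclusions. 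Either way, the lemma follows by combining the cited statements with the conjugacy of Levi complements, and no new computation is required.
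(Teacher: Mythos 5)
Your proof is correct and matches the paper's approach: the paper offers no argument of its own for this lemma, simply citing \cite[2.1--2.3]{richardson} and \cite[Prop.\ 8.4.5]{spr2} as a standard fact, and your write-up is exactly the standard argument behind those citations, with the only genuinely needed extra step (conjugating $\lambda_0$ by an element of $R_u(P)$ to hit the prescribed Levi $L$) handled correctly.
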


\begin{remark}
\label{non-Gcr}
We note that $H \subset G$ is $G$-completely reducible if and only if 
for every $1$-parameter subgroup $\lambda$ of $G$ with 
$H \subset P_\lambda$, $H$ is $G$-conjugate to $\pi_\lambda(H)$,
see \cite[Lem.\ 2.17, Thm.\ 3.1]{BMR}, or \cite[Thm.\ 5.8(ii)]{BMRT}.
\end{remark}

Our first result shows that we can reduce the question of 
non-$G$-completely reducible, spherical subgroups of $G$ to 
ones that are $G$-completely reducible and spherical.
For this we use  again the Deformation
Theorem \ref{Theorem1}, this time with $S=\BBA^1_k=\Spec k[t]$.

\begin{proposition}
  \label{Proposition1}
  Let $G$ be a connected reductive group over $k$ and let $H\subseteq
  G$ be a reductive subgroup of $G$ lying in the parabolic subgroup $P
  = P_\lambda$ for some $1$-parameter subgroup $\lambda$ of $G$. Then
  $H$ is spherical in $G$ if and only if $\pi_\lambda(H)$ is.
\end{proposition}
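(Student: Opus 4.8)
The plan is to realize $\pi_\lambda(H)$ as a flat degeneration of $H$ inside $G$ over the affine line, and then invoke Theorem~\ref{Theorem1}. Concretely, I would consider the subgroup scheme $\CH\subseteq G\times\BBA^1_k$ whose fiber over $t\ne 0$ is the conjugate $\lambda(t)H\lambda(t)\inverse$ and whose fiber over $t=0$ is $\pi_\lambda(H)$. To make this precise I would use the $\Gm$-action on $G$ by conjugation via $\lambda$, extend it to a morphism $\BBA^1_k\to \overline{G}$ on suitable coordinate rings (this is exactly the mechanism behind Lemma~\ref{lem:cochars}(iii): the limit $\lim_{t\to 0}\lambda(t)g\lambda(t)\inverse$ exists for $g\in P_\lambda$), and take the scheme-theoretic closure in $G\times\BBA^1_k$ of the locally closed subscheme $\{(\lambda(t)h\lambda(t)\inverse,t)\mid h\in H,\ t\in\Gm\}$. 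Since $H\subseteq P_\lambda$, this closure maps to $\BBA^1_k$ with $\Gm$-fiber $\lambda(t)H\lambda(t)\inverse\cong H$, and the special fiber over $0$ is contained in $L_\lambda$; one checks that it equals $\pi_\lambda(H)$ precisely because $\pi_\lambda$ is the limit map of Lemma~\ref{lem:cochars}(iii).

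The key steps in order: first, set up $\CH\subseteq G\times_k\BBA^1_k$ as the scheme-theoretic closure described above and verify it is a subgroup scheme over $\BBA^1_k$ — this uses that $P_\lambda$ is a group, that conjugation by $\lambda(t)$ is a group automorphism for $t\in\Gm$, and a standard limit/closure argument for subgroup schemes over a Dedekind base (here $k[t]$), as invoked via \cite{bruhattits} in the proof of Theorem~\ref{Theorem1}. Second, check flatness of $\CH$ over $\BBA^1_k=\Spec k[t]$: since $k[t]$ is a Dedekind (in fact PID) domain and $\CH$ is the closure of a subscheme flat over the dense open $\Gm$, torsion-freeness over $k[t]$ gives flatness, so $\CH$ is a flat closed subgroup scheme. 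Third, identify the fibers: $\CH_t\cong H$ for $t\ne 0$ (conjugation is an isomorphism), and $\CH_0=\pi_\lambda(H)$; the latter is the crux and I would argue it via Lemma~\ref{lem:cochars}(iii), noting $\pi_\lambda$ restricted to $H$ has image $\pi_\lambda(H)$ and that the special fiber of a closure of a $\Gm$-orbit under a contracting action is exactly the image of the limit map. Fourth, apply Theorem~\ref{Theorem1} over $S=\BBA^1_k$: sphericality of one geometric fiber forces sphericality of all of them. Since $\CH_t\cong H$ is spherical in $G$ iff $H$ is (sphericality is an isomorphism-invariant property of the pair), and $\CH_0=\pi_\lambda(H)$, we conclude $H$ is spherical in $G$ iff $\pi_\lambda(H)$ is.

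The main obstacle I anticipate is the scheme-theoretic bookkeeping in steps one and three — namely, showing that the closure $\CH$ is genuinely a subgroup scheme (not just a closed subscheme) and that its special fiber is exactly $\pi_\lambda(H)$ rather than some non-reduced or larger thickening of it. For the subgroup-scheme property, the cleanest route is to note that the multiplication and inversion morphisms of $G$, restricted appropriately, are compatible with the $\Gm$-action and hence extend over $t=0$ by the universal property of scheme-theoretic closure; alternatively one can cite \cite[1.2.6, 1.2.7, 2.1.6, 2.2.2]{bruhattits} as is done in the proof of Theorem~\ref{Theorem1}. For the identification of $\CH_0$, the point is that $\pi_\lambda:P_\lambda\to L_\lambda$ is a morphism of varieties whose graph, intersected with $\lambda(t)$-conjugates, degenerates the graph of conjugation; since $\pi_\lambda$ is a homomorphism and $H$ is reduced, $\pi_\lambda(H)$ is a closed (reduced, since $H$ is smooth) subgroup of $L_\lambda$, and a dimension count together with the containment $\CH_0\supseteq\pi_\lambda(H)$ (which is immediate from the limit description) plus flatness (fibers equidimensional of dimension $\dim H$) forces equality. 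A secondary subtlety is that $\pi_\lambda(H)$ need only be reductive if $H$ is, but this is given in the hypothesis, so $\pi_\lambda(H)$ is a connected reductive subgroup and the ``sphericality is isomorphism-invariant and only depends on the fiber'' step goes through cleanly.
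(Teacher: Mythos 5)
Your proposal is correct and follows essentially the same route as the paper: form the scheme-theoretic closure of $\{(t,\lambda(t)h\lambda(t)\inverse)\}$ in $\BBA^1_k\times G$, use the Dedekind base $k[t]$ and \cite{bruhattits} to get a flat closed subgroup scheme, identify the special fiber with $\pi_\lambda(H)$ via the limit description plus the dimension count (where reductivity of $H$ gives finiteness of $H\cap R_u(P)$), and apply Theorem~\ref{Theorem1}. The only cosmetic difference is that the paper concludes $\pi_\lambda(H)^\circ=\CH_0^\circ$ rather than literal equality of the special fiber with $\pi_\lambda(H)$, which is all that is needed for sphericality.
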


\begin{proof}
  Define $\CH$ to be the closure of $\{(t,g)\mid
  t\in\Gm,\lambda(t)^{-1}g\lambda(t)\in H\}$ in $\BBA^1_k\times
  G$. This is a flat subgroup scheme of the trivial group scheme
  $\CG=\BBA^1_k\times G\to \BBA^1_k$ whose fiber $\CH_t$ is conjugate
  to $H$, for $t\ne0$,
cf.\ \cite [1.2.6, 1.2.7, 2.1.6]{bruhattits}.
Since $\pi_\lambda(h)=\lim_{t\to
    0}\lambda(t)h\lambda(t)^{-1}$ for all $h\in H$, we see that
  $\pi_\lambda(H)\subseteq\CH_0$. Since $H$ is reductive,
  $\Ker\pi_\lambda|_H=R_u(P)\cap H$ is finite and
  therefore $\dim \pi_\lambda(H)=\dim H$. Thus, also $\dim \pi_\lambda(H)=\dim
  \CH_0$ which implies $\pi_\lambda(H)^\circ=\CH_0^\circ$. Thus, our assertion
  boils down to showing that $\CH_0$ is spherical if and only if
  $\CH_1=H$ is spherical which follows immediately from Theorem
  \ref{Theorem1} with $S=\BBA^1_k$.
\end{proof}

We analyze the situation of Proposition \ref{Proposition1} further.

\begin{proposition} 
\label{Proposition2} 
Let $H\subseteq P=P_\lambda \subseteq G$ be as in Proposition
\ref{Proposition1} and assume that $H^* := \pi_\lambda(H) \subseteq
L=L_\lambda$ is not conjugate to $H$ inside $P$. Let $Z:=Z(L)^\circ$ be
the connected center of $L$. Then $Z\not\subseteq H^*$. In particular,
if $H$ is spherical, then $ZH^*$ is a reductive, non-semisimple,
spherical subgroup of $G$.
\end{proposition}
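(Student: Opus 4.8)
The plan is to argue by contradiction: suppose $Z\subseteq H^*$. Since $H^*=\pi_\lambda(H)$ and $\Ker\pi_\lambda|_H=R_u(P)\cap H$ is finite (as $H$ is reductive), $\pi_\lambda$ restricts to an isogeny $H\to H^*$; in particular $\dim H=\dim H^*$ and every cocharacter of $H^*$ lifts, up to a positive power, to a cocharacter of $H$. The cocharacter $\lambda$ itself factors through $Z=Z(L)^\circ$, so if $Z\subseteq H^*$ then some positive power $\lambda^n$ lies in the image of a cocharacter $\mu$ of $H$; concretely there is a $1$-parameter subgroup $\mu$ of $H$ with $\pi_\lambda\circ\mu=\lambda^n$ (after rescaling). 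But $\pi_\lambda$ is the identity on $L_\lambda=C_G(\lambda(\Gm))$ and kills $R_u(P_\lambda)$, and $\mu$ maps into $H\subseteq P_\lambda$, so writing each $\mu(t)$ via the decomposition $P_\lambda=L_\lambda\ltimes R_u(P_\lambda)$ one sees that $\mu$ differs from $\lambda^n$ only by a cocharacter with values in $R_u(P_\lambda)$; since cocharacters into a unipotent group are trivial when composed appropriately, one deduces $\mu=\lambda^n$ exactly, i.e.\ $\lambda^n$ is a cocharacter of $H$ itself.

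Now I would use this to contradict the hypothesis that $H^*$ is \emph{not} $P$-conjugate to $H$. Since $\lambda^n$ (hence $\lambda$, as $P_{\lambda^n}=P_\lambda$ and $L_{\lambda^n}=L_\lambda$) is a cocharacter of $H$, the limit map $h\mapsto\lim_{t\to0}\lambda(t)h\lambda(t)^{-1}$ restricted to $H$ is exactly the analogous Levi-projection \emph{internal to $H$}: it sends $H$ onto $C_H(\lambda(\Gm))=H\cap L_\lambda$, and by Lemma \ref{lem:cochars}(iii) applied inside the reductive group $H$ (with parabolic $H\cap P_\lambda$) this map, being the canonical projection of the parabolic $H\cap P$ onto its Levi, has kernel $H\cap R_u(P)$, which is finite. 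A reductive group has no nontrivial finite normal connected-quotient kernel of a parabolic-Levi projection — i.e.\ the parabolic $H\cap P$ must equal its Levi — so $H=H\cap L_\lambda$, meaning $H\subseteq L_\lambda$ already. But then $\pi_\lambda|_H=\id$, so $H^*=H$, contradicting the assumption that $H^*$ is not $P$-conjugate to $H$. This proves $Z\not\subseteq H^*$.

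For the final sentence: assume $H$ is spherical. Then $H^*$ is spherical in $G$ by Proposition \ref{Proposition1}, hence so is the larger group $ZH^*$. Since $Z$ is a central torus of $L$ and $H^*\subseteq L$, the product $ZH^*$ is a closed subgroup of $L$, hence of $G$, and it is reductive (an almost-direct product of the torus $Z$ with the reductive $H^*$). It is non-semisimple: its radical contains the subtorus $Z/(Z\cap H^*)$, which is nontrivial precisely because $Z\not\subseteq H^*$ (as $Z$ is connected, $Z\not\subseteq H^*$ forces $\dim(Z\cap H^*)<\dim Z$). This is exactly the asserted conclusion.

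The main obstacle I anticipate is the bookkeeping in the second paragraph: turning "$Z\subseteq H^*$ and $\pi_\lambda|_H$ is an isogeny" into the clean statement "$\lambda$ (up to a power) is literally a cocharacter of $H$" and then "$H\subseteq L_\lambda$". The subtlety is that lifting a cocharacter along an isogeny only recovers it up to a bounded power and up to the finite kernel $H\cap R_u(P)$, so one has to be a little careful that no genuine unipotent part sneaks in; the cleanest route is probably to observe directly that $\lambda$ normalizes $H$ (since it normalizes $Z\subseteq H^*$ and $H^*$ determines $H$ up to the relevant projection) and that $H$ being reductive and stable under $\lambda$-conjugation with a single fixed limit forces $H\subseteq C_G(\lambda(\Gm))$. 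Alternatively one can invoke Remark \ref{non-Gcr} contrapositively — but the direct cocharacter argument is self-contained.
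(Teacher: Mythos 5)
Your overall strategy coincides with the paper's: assume $Z\subseteq H^*$, lift the central torus $\lambda(\Gm)\subseteq Z$ through the isogeny $\pi_\lambda|_H\colon H\to H^*$ to a one-dimensional torus in $H$, arrange that this torus is $\lambda(\Gm)$ itself, and conclude $H\subseteq C_G(\lambda(\Gm))=L$, whence $H=H^*$, contradicting the non-conjugacy hypothesis; your final paragraph on $ZH^*$ is also exactly the paper's argument. However, there is a genuine flaw in the step where you claim the lifted cocharacter $\mu$ satisfies $\mu=\lambda^n$ \emph{exactly}. Writing $\mu(t)=\lambda^n(t)u(t)$ with $u(t)\in R_u(P)$ does not make $t\mapsto u(t)$ a homomorphism --- it only satisfies a cocycle identity, because $L$ does not commute with $R_u(P)$ --- and the conclusion is simply false in general: in $\GL(2)$ with $P$ the upper-triangular Borel and $\lambda(t)=\mathrm{diag}(t,1)$, the map $\mu(t)=\left(\begin{smallmatrix}t&t-1\\0&1\end{smallmatrix}\right)$ is a cocharacter of $P$ with $\pi_\lambda\circ\mu=\lambda$ but $\mu\ne\lambda$. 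So the "no genuine unipotent part sneaks in" hope is exactly what can fail, and your second paragraph, which needs $H\subseteq P_{\lambda^n}$ with $\lambda^n$ literally a cocharacter of $H$, does not yet apply.

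The repair is the one the paper uses: $\mu(\Gm)$ is a maximal torus of the solvable group $\lambda(\Gm)\cdot R_u(P)$, hence conjugate to $\lambda(\Gm)$ by some $u\in R_u(P)$; since $u\in\ker\pi_\lambda$, replacing $H$ by $uHu^{-1}$ changes neither $H^*$ nor the $P$-conjugacy class of $H$, and after this harmless replacement one really has $\lambda(\Gm)\subseteq H$. (The paper gets centrality of the lifted torus in $H$ for free from rigidity of tori, it being connected and normal in $H$ because $\lambda(\Gm)$ is central in $H^*$, and then concludes $H\subseteq C_G(\lambda(\Gm))$ directly; your alternative of deducing $H=C_H(\lambda^n(\Gm))$ from $P_{\lambda^n}(H)=H$ and reductivity of $H$ also works once the conjugation has been performed.) With this one correction your proof is complete and is essentially the paper's.
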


\begin{proof}
  Suppose $Z\subseteq H^*$.  Then, by Lemma \ref{lem:cochars}(ii),
  $C^*:=\lambda(\Gm)\subseteq H^*$. Let $C\subseteq H$ be the preimage
  of $C^*$ in $H$. Since $H\to H^*$ is an isogeny, $C$ is a
  1-dimensional torus lying in the center of $H$. Moreover, $C$ is a
  maximal torus of $C^*R_u(P)$ hence conjugate to $C^*$. So we may
  assume $C=C^*$. But then $H\subseteq C_G(C^*)=L$, and thus $H=H^*$,
  contradicting our assumptions.
\end{proof}

In the following lemma, we denote by $P_m$ the standard maximal
parabolic subgroup of the simple group $G$ corresponding to the $m$-th
simple root in the labeling of the Dynkin diagram of $G$ according to
\cite{bourbaki:groupes}.  Let $U = R_u(P)$ be the unipotent radical of
$P_m$.

\begin{lemma} 
\label{lemma9}
Let $G$ be a simple group and $H$ is a
connected, reductive, 
non-$G$-completely reducible, spherical subgroup of $G$
which is contained in the parabolic subgroup $P$ of $G$.
Then there are the following possibilities for $H^* = \pi_\lambda(H)$, $P$, 
and $G$ as in Proposition \ref{Proposition2}:
\begin{table}[h!t]
\renewcommand{\arraystretch}{1.5}
$$
  \begin{array}{llllc}
H^* & P & G & U &H^1_\gen(H',U)\\ 
\hline\hline
\SL(m){\times}\SL(n) &P_m,P_n & \SL(m{+}n)\ m>n\ge1 & k^m\otimes k^n &\begin{cases}k& m=2 \\ 0& m>2 \end{cases}\\ 
\Sp(2n) & P_1,P_{2n} & \SL(2n+1)\ n\ge2 & k^{2n} & k\\ 
\SL(2n+1) & P_{2n},P_{2n+1} & \SO(4n+2)\ n\ge2 & \wedge^2k^{2n+1} & 0\\ 
\D_5 & P_1,P_6 & \E_6 & k^{16} \text{ (half-spin reps.) } & 0
\end{array}
$$
\end{table}

In each case, the unipotent radical $U$ of $P$ is a vector group
on which $H^*$ acts linearly and irreducibly according to this
table. The last column lists the first generic cohomology group in the
sense of \cite{CPSK}.
\end{lemma}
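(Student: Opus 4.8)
The plan is to combine the structural reduction already established in Propositions \ref{Proposition1} and \ref{Proposition2} with a classification of the reductive non-semisimple spherical subgroups coming from Krämer's list (via Corollary \ref{cor:lift}) and from the maximality results of the previous sections. Concretely, by Proposition \ref{Proposition2}, if $H$ is a connected reductive non-$G$-completely reducible spherical subgroup lying in $P=P_\lambda$ but in no Levi subgroup of $P$, then $H^*=\pi_\lambda(H)\subseteq L=L_\lambda$ and $ZH^*$ is a reductive non-semisimple spherical subgroup of $G$, where $Z=Z(L)^\circ$. So first I would enumerate, using Lemma \ref{lem:subgroups} (sphericality passes to overgroups) together with the tables from Sections \ref{sec:cr} and the exceptional analogue, all reductive non-semisimple spherical subgroups $M$ of a simple group $G$ that arise as $ZH^*$ with $H^*$ semisimple sitting inside a proper Levi $L$ with $Z=Z(L)^\circ$ one-dimensional (the case $\dim Z\ge 2$ can be excluded since then $M$ would be too small, or handled by the explicit lists). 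This is a bookkeeping step: go through Table \ref{table:sp-classical} and Table \ref{table:sp-exceptional}, pick out the non-semisimple entries ($S(\GL(m)\times\GL(n))\subset\SL(m+n)$, $\Gm\cdot\Sp(2n)\subset\SL(2n+1)$, $\Gm\times\Sp(2n)\subset\Sp(2n+2)$, $\GL(n)\subset\SO(2n)$, $\GL(n)\subset\SO(2n+1)$, $\Gm\cdot\D_5\subset\E_6$, $\Gm\cdot\E_6\subset\E_7$, $\SO(2)\times\Spin(7)\subset\SO(10)$, and a few more), and for each decide whether the semisimple part $H^*$ actually lies in a Levi $L$ with the given torus as $Z(L)^\circ$.

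Second, for each surviving candidate $M=ZH^*$ I would identify the parabolic $P$ and the cocharacter $\lambda$ with $L_\lambda\supseteq H^*$ and $Z(L_\lambda)^\circ=Z$, read off the unipotent radical $U=R_u(P)$ as an $H^*$-module, and check that $U$ is abelian (a vector group) with $H^*$ acting linearly. In the classical cases this is the standard description of $R_u(P_m)$ for a maximal parabolic: for $\SL(m+n)$ with $P_m$ we get $U\cong k^m\otimes k^n$ with $\SL(m)\times\SL(n)$ acting via the outer tensor product; for $\SL(2n+1)$ with $P_1$ (or $P_{2n}$) the Levi is $\Gm\cdot\GL(2n)$ and $U\cong k^{2n}$ is the natural $\Sp(2n)$-module after restricting $H^*=\Sp(2n)$; for $\SO(4n+2)=\D_{2n+1}$ with the endnode parabolic $P_{2n}$ (or $P_{2n+1}$) the Levi has semisimple part $\A_{2n}=\SL(2n+1)$ and $U\cong\wedge^2 k^{2n+1}$; for $\E_6$ with $P_1$ (or $P_6$) the Levi has semisimple part $\D_5$ and $U\cong k^{16}$ is a half-spin module. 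In each case $U$ is indeed a vector group and the listed module is irreducible — this is well known and can be cited from the structure theory of parabolic subgroups (e.g.\ \cite{richardson}, \cite{spr2}) together with the weight combinatorics in \cite{bourbaki:groupes}; I would verify abelianness by noting that in each case the simple root $\alpha_m$ has coefficient $1$ in the highest root, so $[U,U]=1$.

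Third, I would rule out all the other non-semisimple spherical pairs as sources of genuine non-$G$-completely reducible subgroups. For instance $\GL(n)\subset\SO(2n)$ and $\GL(n)\subset\SO(2n+1)$: here $\GL(n)$ is itself a Levi subgroup of a parabolic of $G$, so any $H^*$ producing this must be the full semisimple part $\SL(n)$ and $U=\wedge^2 k^n$ resp.\ $k^n\oplus\wedge^2 k^n$ — but then the relevant $H^1_{\gen}$ vanishes (in the $\SO$ cases, or one argues $H$ is forced to be $G$-cr), so no new non-$G$-cr subgroup arises; similarly $\Gm\times\Sp(2n)\subset\Sp(2n+2)$ gives $U=k^{2n}\oplus k$ which is not irreducible — but again $\Sp(2n)$ is a Levi factor, so one checks directly that it yields nothing new. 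The same elimination applies to $\Gm\cdot\E_6\subset\E_7$ (the $\E_6$ is a Levi factor) and to $\SO(2)\times\Spin(7)\subset\SO(10)$. The key point making these eliminations work is that the candidate $H^*$ must be a \emph{proper} subgroup of the Levi semisimple part in order for a non-$G$-cr $H$ to deform onto it, and in the excluded cases the list of spherical subgroups of the relevant classical or exceptional group (Lemmas \ref{lem:lem22}--\ref{lem:lem24}, Lemma \ref{lem:e6f4}) shows no such proper subgroup with the right dimension exists; moreover, in the four retained cases one must double-check that $H^*$ really is a \emph{proper} subgroup of the Levi's derived group so that genuinely non-$G$-cr lifts can occur — this is where the listed $H^1_{\gen}(H',U)$ comes in, and is exactly why the first column of the table records $H^*$ rather than the full Levi.

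The last column — computing $H^1_{\gen}(H',U)$ in the sense of \cite{CPSK} — is the main technical obstacle. For $U=k^m\otimes k^n$ under $\SL(m)\times\SL(n)$ with $m>n$, generic cohomology $H^1_{\gen}$ equals the characteristic-zero value $H^1(\SL(m)\times\SL(n),\,k^m\otimes k^n)=0$ unless small-prime phenomena intervene; the value $k$ for $m=2$ (so $H'$ contains an $\SL(2)$ acting as the sum of $n$ copies of the natural module, and the relevant cohomology $H^1(\SL(2),(k^2)^{\oplus n}\otimes\cdots)$) must be pinned down carefully, and this is genuinely $p$-dependent. For $\Sp(2n)$ on $k^{2n}$ the group cohomology $H^1(\Sp(2n),k^{2n})$ is known to be $k$ in characteristic $2$ and $0$ otherwise, and "generic" makes it uniformly $k$; for $\SL(2n+1)$ on $\wedge^2 k^{2n+1}$ and $\D_5$ on the half-spin $k^{16}$ the generic $H^1$ vanishes since these are "generic" (large-weight) situations where $H^1_{\gen}$ agrees with the characteristic-zero cohomology, which is zero because the modules are non-trivial irreducibles of a semisimple group. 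So the strategy for this column is: invoke the stability theorem of \cite{CPSK} identifying $H^1_{\gen}$ with $H^1$ computed after enough Frobenius twists, reduce to characteristic zero where $H^1$ of a semisimple group with coefficients in a non-trivial irreducible vanishes, and then treat the finitely many genuinely small cases ($m=2$ and the $\Sp$ case in characteristic $2$) by a direct computation or by citing the known values of these cohomology groups. I expect the $m=2$ computation and the verification that no other small-characteristic contributions sneak in to be the delicate part; everything else is structure theory and table-checking.
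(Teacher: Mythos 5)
Your overall strategy is the paper's: reduce via Propositions \ref{Proposition1} and \ref{Proposition2} to a candidate $H^*$ sitting in a proper Levi $L$ with $Z(L)^\circ\not\subseteq H^*$, scan the classification tables, read off $U=R_u(P)$ from the structure of maximal parabolics, and cite the literature for the generic cohomology (the paper points to \cite{CPS}). So the architecture is right. But the step where you eliminate the extra candidates ($\Gm\times\Sp(2n)\subset\Sp(2n+2)$, $\Gm\cdot\E_6\subset\E_7$, $\SO(2)\times\Spin(7)\subset\SO(10)$, $\GL(n)\subset\SO(2n+1)$, etc.) contains a genuine gap.

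You declare that ``the key point making these eliminations work is that the candidate $H^*$ must be a proper subgroup of the Levi's semisimple part in order for a non-$G$-cr $H$ to deform onto it.'' This criterion is false, and the table itself refutes it: in the first row with $m=2$, $n=1$, the group $H^*=\SL(2)$ \emph{equals} the derived group of the Levi $S(\GL(2)\times\GL(1))$ of $\SL(3)$, yet $H^1_\gen(H^*,U)=k$ and the genuinely non-$G$-cr spherical subgroup $\SO(3)\subset\SL(3)$ ($p=2$) arises from it; conversely $\Sp(2n)\subsetneq\SL(2n)$ is proper in row two. Properness in $L'$ is neither necessary nor sufficient, so the eliminations resting on it are not justified. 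The correct and much simpler tool is Proposition \ref{Proposition1} itself: since $H$ is spherical if and only if $H^*=\pi_\lambda(H)$ is, the subgroup $H^*$ \emph{itself} (not merely $ZH^*$) must be spherical in $G$. Your step one only imposes sphericality on $ZH^*$, which is why the spurious candidates survive into step three. Once you require $H^*$ spherical, all of them die immediately by the classification already established: $\Sp(2n)$ is not spherical in $\Sp(2n+2)$, $\E_6$ is not spherical in $\E_7$, $\Spin(7)$ is not spherical in $\SO(10)$ (Lemma \ref{lem:lem24}(v)), $\SL(n)$ is not spherical in $\SO(2n+1)$ or in $\Sp(2n)$ or in $\SO(2n)$ for $n$ even, and what remains is exactly the four rows. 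This is precisely what the paper means by ``the subgroups $H^*$ are just those $G$-completely reducible, spherical subgroups which are centralized by a non-trivial torus.''

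Two further points. First, your description of $U$ for the pair $\Gm\times\Sp(2n)\subset\Sp(2n+2)$ as ``$k^{2n}\oplus k$, not irreducible'' is wrong: the simple root defining that parabolic has coefficient $2$ in the highest root of type $\C_{n+1}$, so $R_u(P)$ is a non-abelian Heisenberg group; and since $H^1_\gen(\Sp(2n),k^{2n})=k$ in characteristic $2$, ``one checks directly that it yields nothing new'' is exactly the nontrivial claim one cannot wave away — it is only the failure of $\Sp(2n)$ to be spherical in $\Sp(2n+2)$ that disposes of this case. Second, a small slip in the cohomology discussion: $H^1(\Sp(2n),k^{2n})=0$ as algebraic-group cohomology even for $p=2$ (the natural module is $H^0(\omega_1)$); it is the Frobenius twists, hence the generic cohomology, that are one-dimensional in characteristic $2$ — and the entry $k$ in the table should be read as the $p=2$ value, consistent with Corollary \ref{cor:nonGcr} producing $\SO(2n+1)$ only in that characteristic.
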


\begin{proof}
The subgroups $H^*$ are just those $G$-completely reducible, spherical
subgroups which are centralized by a non-trivial torus, because 
this is a necessary condition, by Proposition \ref{Proposition2}. 
The cohomology groups have been calculated in, e.g., \cite{CPS}.
\end{proof}

We keep the notation of Lemma \ref{lemma9}. We know from the proof of
Proposition \ref{Proposition1} that the projection $\pi: H\to  H^*$ is an
isogeny. Its kernel $U\cap H$ is therefore a finite normal, hence
central subgroup of $H$. Moreover, $U\cap H$ is a $p$-group, 
since it is a subgroup of $U$. 
We conclude that $U\cap H = 1$, 
i.e., that $H\to H^*$ is bijective.

Now let $Q:=H\cdot U=H^*\ltimes U$. Our goal is to determine all
conjugacy classes of subgroups $H\subseteq Q$ such that the induced
projection $\pi:H\to H^*$ is bijective. If this bijection is even
an isomorphism, then it is well known this task is accomplished by the
cohomology group $H^1(H^*,U)$.

In general, we use the fact that each $H^*$ of interest is defined
over $\BBF_p$. This means that $H^*$ admits a Frobenius endomorphism
$F:H^*\to H^*$. Because $\pi$ is purely inseparable, it factors through 
a sufficiently high power $F^s$ of $F$, i.e., there is an $s\ge0$ and an
isogeny $\psi:H^*\to  H$ such that $F^s = \pi \circ \psi$.

Now let $\Qtilde$ be the fiber product of $Q$ over
$F^s:H^*\to H^*$. Then we have a cartesian diagram
\begin{equation*}
  \xymatrix{
    \Qtilde \ar[r]^{\tilde\pi} \ar[d] & H^* \ar[d]^{F^s} \\ 
    Q \ar[r]^{\pi} & H^*}  
\end{equation*}
Moreover, $\psi$ defines a section $\tilde\psi$ of $\tilde\pi$ such
that $H$ is the image of $\tilde\psi(H^*)$ in $Q$. Now observe that,
$\Qtilde=H^*\ltimes U^{(p^s)}$, where $U^{(p^s)}$ is the $s$-th
Frobenius twist of $U$. Therefore, the conjugacy class of $\tilde\psi$
and therefore of $H$ is determined by an element of
$H^1(H^*,U^{(p^s)})$.  By definition, the generic cohomology group
$H^1_\gen(H^*,U)$ is the inductive limit of the system
\begin{equation*}
  \xymatrix{
H^1(H^*,U) \ar[r] & H^1(H^*,U^{(p)}) \ar[r]  
& H^1(H^*,U^{(p^2)}) \ar[r]  & H^1(H^*,U^{(p^3)}) \ar[r] & \ldots}  
\end{equation*}
cf.\ \cite{CPSK}.
It is well known that elements of $H^1(H^*,U^{(p^s)})$ classify conjugacy
classes of (scheme theoretic) complements of $U^{(p^s)}$ in $\Qtilde$.
Thus, the conjugacy classes of the subgroups $H$ are classified by
elements of $H^1_\gen(H^*,U)$.

\begin{corollary} 
\label{cor:nonGcr}
Let $G$ be a simple group and $H\subseteq G$ a connected, reductive, 
spherical subgroup which is not $G$-completely reducible in
$G$. Then $G=\SL(2n+1)$ for some $n\ge1$ and $H=\SO(2n+1)$ or its dual
$H=\SO(2n+1)^\vee$.
\end{corollary}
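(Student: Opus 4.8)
The plan is to combine Lemma~\ref{lemma9} with the cohomological classification developed just above it. By Proposition~\ref{Proposition1} and Proposition~\ref{Proposition2}, a non-$G$-completely reducible reductive spherical subgroup $H$ of $G$ has projection $H^* = \pi_\lambda(H)$ which must be one of the four entries in the table of Lemma~\ref{lemma9} (these being exactly the $G$-completely reducible spherical subgroups centralized by a nontrivial torus, arising inside the indicated parabolic $P$ with vector-group unipotent radical $U$). Since $H$ is genuinely non-$G$-completely reducible, it is \emph{not} conjugate to $H^*$ inside $P$; equivalently, by the discussion following Lemma~\ref{lemma9}, the conjugacy class of $H$ corresponds to a \emph{nonzero} element of the generic cohomology group $H^1_\gen(H^*,U)$ (the zero class corresponding to $H$ being conjugate into the Levi $L$).

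First I would dispose of the three rows of the table with vanishing generic cohomology: for $H^* = \SL(m)\times\SL(n) \subset \SL(m+n)$ with $m > 2$, for $H^* = \SL(2n+1) \subset \SO(4n+2)$ with $n \ge 2$, and for $H^* = \D_5 \subset \E_6$, the last column records $H^1_\gen(H^*,U) = 0$. In these cases every complement of $U^{(p^s)}$ in $\Qtilde$ is conjugate to $H^*$, so $H$ is conjugate to $H^*$ inside $P$ and hence is $G$-completely reducible — contradicting the hypothesis. This leaves only the two rows with $H^1_\gen(H^*,U) = k$: namely $H^* = \SL(2)\times\SL(n)$ (the $m=2$ case) inside $\SL(n+2)$, and $H^* = \Sp(2n) \subset \SL(2n+1)$ with $n \ge 2$.

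Next I would identify the actual subgroups $H$ arising from the nonzero classes. For $H^* = \Sp(2n) \subset \SL(2n+1)$ with $U \cong k^{2n}$ the natural module, a nonzero class in $H^1_\gen(\Sp(2n),k^{2n})$ produces a subgroup $H \subset Q = \Sp(2n)\ltimes k^{2n} \subset \SL(2n+1)$ projecting bijectively onto $\Sp(2n)$; one recognizes this as $\SO(2n+1) \subset \SL(2n+1)$, embedded via its natural $(2n+1)$-dimensional module which in characteristic $2$ is a non-split extension of the trivial module by the $2n$-dimensional symplectic module — exactly the non-$G$-completely reducible embedding. (By Remark~\ref{rem:isogeny}(ii) we are of course in characteristic $2$, since $\SO(2n+1)$ and $\Sp(2n)$ are isogenous but not conjugate precisely then.) The dual module gives the second class $\SO(2n+1)^\vee$, swapped from the first by the graph automorphism of $\SL(2n+1)$. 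For the $m=2$ case $H^* = \SL(2)\times\SL(n) \subset \SL(n+2)$, I would argue that the resulting $H$ is \emph{not} spherical, thereby ruling it out: the dimension bound of Lemma~\ref{lemma3}, or a direct check via Lemma~\ref{lemma3refined}, should exclude it, since the spherical members of Table~\ref{table:sp-classical} of the form $\SL(2)\times\SL(n)$-related inside $\SL(n+2)$ already appear as $G$-completely reducible ones and no new non-$G$-completely reducible sibling can be spherical for dimension reasons.

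The main obstacle I anticipate is precisely this last point: verifying that the non-$G$-completely reducible subgroup of $\SL(n+2)$ with projection $\SL(2)\times\SL(n)$ is not spherical. Unlike the clean cohomological vanishing arguments, this requires either an explicit dimension count — comparing $\dim H$ against $\dim G/B$ after accounting for the generic $B$-stabilizer — or an invariant-theoretic argument showing $B$ has no dense orbit on $G/H$. I expect the inequality \eqref{eq:2} to be tight or nearly tight for $\SL(2)\times\SL(n)$ in $\SL(n+2)$, so one must be careful: the deformation to the non-$G$-cr form does not change dimensions, so if the $G$-cr form is spherical the naive dimension bound alone will not separate them, and one genuinely needs Proposition~\ref{Proposition1} in the form that $H$ is spherical iff $H^* = \SL(2)\times\SL(n)$ is — but then $H$ \emph{is} spherical, so in fact it belongs in the classification as the $G$-cr representative, not as a new case. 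Reconciling this — i.e., confirming that in the $m=2$ row the subgroup $H$ is, after all, conjugate to $H^*$ despite the nonzero generic cohomology (perhaps because the relevant Frobenius-twisted class becomes conjugate to $H^*$ under an outer automorphism, or because $H^*$ is already non-simple with an extra torus absorbing the obstruction) — is the delicate step, and I would resolve it by examining whether the nonzero class in $H^1_\gen(\SL(2)\times\SL(n), k^2\otimes k^n)$ actually yields a subgroup conjugate in $G$ (though not in $P$) to a $G$-completely reducible one, which would then already be listed.
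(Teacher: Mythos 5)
Your overall strategy is the paper's: restrict to the rows of Lemma~\ref{lemma9} with $H^1_\gen(H^*,U)\ne0$ and identify the resulting complements. But you have misread the first row of the table, and this error propagates into the one step you yourself flag as delicate. The first row carries the constraint $m>n\ge1$, so the sub-case $m=2$ forces $n=1$; the entry with $H^1_\gen=k$ is therefore $\SL(2)\times\SL(1)=\Sp(2)\subset\SL(3)$ with $U=k^2$, which is nothing but the $n=1$ instance of the second row and produces $\SO(3)\subset\SL(3)$. It must be \emph{kept}, not ruled out -- this is exactly why the corollary reads $n\ge1$ while the second row of the table only covers $n\ge2$. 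Your proposal instead invents a family $\SL(2)\times\SL(n)\subset\SL(n+2)$ for general $n$ and tries to exclude it by a dimension count. As you correctly observe mid-argument, that cannot work: Proposition~\ref{Proposition1} says $H$ is spherical if and only if $H^*$ is, and the deformation preserves dimension, so no inequality of the form \eqref{eq:2} or \eqref{eq:2refined} can separate $H$ from $H^*$. The tension you then try to "reconcile" is not a real phenomenon to be explained away; it is an artifact of the misreading, and your speculative resolution (that the nonzero class might become conjugate to $H^*$ in $G$) is unsupported and, for the genuine cases, false -- the whole point of the corollary is that these classes give subgroups that are \emph{not} $G$-conjugate to $H^*$.

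A second, smaller gap: since $H^1_\gen(H^*,U)\cong k$, the nonzero classes a priori form a one-parameter family of $P$-conjugacy classes of complements, and you assert without argument that "a nonzero class produces" the single subgroup $\SO(2n+1)$. The paper closes this by noting that the centralizer $C_G(H^*)\cong\Gm$ acts nontrivially on $H^1_\gen(H^*,U)\cong k$, so all nonzero classes are fused into one $G$-conjugacy class for each choice of $P$; the two choices of $P$ (i.e.\ $U$ versus its dual) give $\SO(2n+1)$ and $\SO(2n+1)^\vee$. Your identification of the nontrivial complement with $\SO(2n+1)$ via the non-split extension of the trivial module by $k^{2n}$ in characteristic $2$ is correct and matches the paper's intent, but the fusion step should be made explicit.
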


\begin{proof}
  By the definition of $G$-complete reducibility there is a parabolic
  subgroup $P\subseteq G$ containing $H$ such that $H$ is not
  conjugate to $H^*=\pi_\lambda(H)$. From the discussion above we
  infer that $H^*\subset P$ is one of the cases in Lemma \ref{lemma9}
  with $H^1_\gen(H^*,U)\ne0$. Thus $G=\SL(2n+1)$ and $H^*=\Sp(2n)$
  with $n\ge1$. Because the centralizer $C_G(H^*) \cong \Gm$ of
  $\Sp(2n)$ acts non-trivially on $H^1_\gen(H^*,U)\cong k$, there
  exists only one conjugacy class of $H$ in $G$, depending on the
  choice of $P$, though. Thereby we obtain the two cases above.
\end{proof}

This concludes the proof of our main Classification Theorem: Using
Remark \ref{rem:isogeny}, 
we may assume that $G$ is either strictly classical or
exceptional. Then the $G$-completely reducible, connected, spherical
subgroups are determined in Corollary \ref{cor:maximal} and 
Lemmas \ref{lemma8}, \ref{lem:e6f4},
respectively. Finally, Corollary \ref{cor:nonGcr} 
lists all non-$G$-completely reducible subgroups.


\bigskip {\bf Acknowledgments}: 
The authors acknowledge 
support from the DFG-priority program 
SPP1833 ``Representation Theory''.
We are grateful to the anonymous referee for 
suggesting several improvements.


\bigskip

\bibliographystyle{amsalpha}

\end{document}